\long\def\symbolfootnote[#1]#2{\begingroup%
\def\thefootnote{\fnsymbol{footnote}}\footnote[#1]{#2}\endgroup}
\def\ex{\mathrm{ex}}
\newcommand{\N}{\mathbb{N}}
\def\E{\mathcal{E}}
\def\G{\mathcal{G}}
\def\GG{\mathbb{G}}
\def\MM{\mathbb{M}}
\def\expec#1{{\mathbb E[#1]}\,}
\def\eventcnd#1#2{\{#1\,|\,#2\}}
\def\event#1{\{#1\}}
\def\probcnd#1#2{\textrm{Pr}[#1\,|\,#2]}
\def\prob#1{\textrm{Pr}[#1]}
\def\Prob#1{\textrm{Pr}\Big[#1\Big]}
\def\Probcnd#1#2{\textrm{Pr}\Big[#1\,\Big|\,#2\Big]}
\def\ol#1{\overline{#1}}
\def\floor#1{\lfloor #1 \rfloor}
\def\ceil#1{\lceil #1 \rceil}
\def\ex{\textrm{ex}}
\date{
\small Mathematics Subject Classifications: 05C80, 05C35}
\title{Lower Bounds for the Size of Random Maximal $H$-Free Graphs}
\author{Guy Wolfovitz \\
\small Department of Computer Science\\[-0.8ex]
\small Haifa University, Haifa, Israel\\[-0.8ex]
\small \texttt{gwolfovi@cs.haifa.ac.il}}
\newtheorem{theorem}{Theorem}[section]
\newtheorem{lemma}[theorem]{Lemma}
\newtheorem{claim}[theorem]{Claim}
\newtheorem{definition}{Definition}
\newtheorem{proposition}[theorem]{Proposition}
\newtheorem{fact}[theorem]{Fact}
\renewcommand{\epsilon}{\varepsilon}
\newtheoremstyle{upright}%
        {8pt plus2pt minus4pt}%
        {8pt plus2pt minus4pt}%
        {\upshape}%
        {}%
        {\bfseries}%
        {:}%
        {1em}%
        {}%
\theoremstyle{upright}
\newcommand{\ignore}[1]{}
\begin{document}

\maketitle

\begin{abstract}
We consider the next random process for generating a maximal $H$-free
graph: Given a fixed graph $H$ and an integer $n$, start by taking a
uniformly random permutation of the edges of the complete $n$-vertex
graph $K_n$. Then, traverse the edges of $K_n$ according to the order
imposed by the permutation and add each traversed edge to an (initially
empty) evolving $n$-vertex graph - unless its addition creates a copy of
$H$. The result of this process is a maximal $H$-free graph $\MM_n(H)$.
Our main result is a new lower bound on the expected number of edges in
$\MM_n(H)$, for $H$ that is regular, strictly $2$-balanced.
As a corollary, we obtain new lower bounds for Tur\'{a}n numbers of
complete, balanced bipartite graphs.  Namely, for fixed $r \ge 5$, we
show that $\ex(n, K_{r,r}) = \Omega(n^{2-2/(r+1)}(\ln\ln n)^{1/(r^2-1)})$.
This improves an old lower bound of Erd\H{o}s and Spencer.

Our result relies on giving a non-trivial lower bound on the probability
that a given edge is included in $\MM_n(H)$, conditioned on the event
that the edge is traversed relatively (but not trivially) early during
the process.
\end{abstract}

\vspace{.21cm}

\section{Introduction}
\label{sec::1}
Consider the next random process for generating a maximal $H$-free
graph. Given $n \in \N$ and a graph $H$, assign every edge $f$ of the
complete $n$-vertex graph $K_n$ a birthtime $\beta(f)$, distributed
uniformly at random in the interval $[0,1]$. (Note that with probability $1$ the
birthtimes are distinct and so $\beta$ is a permutation.) Now start with
the empty $n$-vertex graph and iteratively add edges to it as follows.
Traverse the edges of $K_n$ in order of their birthtimes, starting with
the edge whose birthtime is smallest, and add each traversed edge to
the evolving graph, unless its addition creates a copy of $H$.  When all
edges of $K_n$ have been exhausted, the process ends. Denote by $\MM_n(H)$
the graph which is the result of the above process.
The main concern in this paper is bounding from below the expected number
of edges of $\MM_n(H)$, which is denoted by $e(\MM_n(H))$. We always
think of $H$ as being fixed and of $n$ as going to $\infty$. To be able to
state our results, we need a few definitions. For a graph $H$, let $v_H$
and $e_H$ denote, respectively, the number of vertices and edges in $H$.
Say that a graph $H$ is \emph{strictly $2$-balanced} if $v_H, e_H \ge
3$ and for every $F \subsetneq H$ with $v_F \ge 3$, $(e_H-1)/(v_H-2) >
(e_F-1)/(v_F-2)$. Examples of strictly $2$-balanced graphs include the
$r$-cycle $C_r$, the complete $r$-vertex graph $K_r$, the
complete bipartite graph $K_{r-1,r-1}$
and the $(r-1)$-dimensional cube, for all $r \ge 3$. Note that all of these
examples are of graphs which are regular.
Our main result follows.
\begin{theorem}
\label{thm::main}
Let $H$ be a regular, strictly $2$-balanced graph. Then 
\begin{displaymath}
\expec{e(\MM_n(H))}  =  \Omega\big( n^{2 - (v_H-2)/(e_H-1)}
           (\ln\ln n)^{1/(e_H-1)} \big).
\end{displaymath}
\end{theorem}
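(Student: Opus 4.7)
The plan is to prove the theorem by lower-bounding, for each edge $f$ of $K_n$ and each time $t$ in a suitable range, the conditional probability that $f$ is included in $\MM_n(H)$ given $\beta(f) = t$. Set
\[
p^* \deq c \cdot n^{-(v_H-2)/(e_H-1)} (\ln\ln n)^{1/(e_H-1)}
\]
for a sufficiently small constant $c = c(H) > 0$. The target is to show that for every edge $f$ and every $t \le p^*$,
\[
\probcnd{f \in \MM_n(H)}{\beta(f) = t} \ge 1/2.
\]
Once this is established, linearity of expectation, integration over $t \in [0, p^*]$, and summation over the $\binom{n}{2}$ edges of $K_n$ give $\expec{e(\MM_n(H))} \ge \binom{n}{2} p^*/2$, which is the claimed bound.

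Fix an edge $f$ and a birthtime $t \le p^*$. Conditioned on $\beta(f) = t$, the other edges of $K_n$ have independent uniform birthtimes in $[0,1]$, and letting $G_{t^-}$ denote the graph produced by the $H$-free process on these other edges up to time $t$, the edge $f$ is in $\MM_n(H)$ iff no copy of $H-f$ in $G_{t^-}$ sits on a vertex set containing the endpoints of $f$ (call such a subgraph an \emph{$f$-extension}). A union bound gives
\[
\probcnd{f \notin \MM_n(H)}{\beta(f) = t} \le \sum_\phi \prob{\phi \subseteq G_{t^-}},
\]
where $\phi$ ranges over the $\Theta(n^{v_H-2})$ possible $f$-extensions. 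The trivial estimate $\prob{\phi \subseteq G_{t^-}} \le t^{e_H - 1}$, obtained from the fact that each edge of $\phi$ must have birthtime in $[0, t]$, only drives the union bound below $1/2$ for $t = O(n^{-(v_H-2)/(e_H-1)})$, short of $p^*$ by exactly the $(\ln\ln n)^{1/(e_H-1)}$ factor we seek.

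The core of the proof is therefore to improve the single-extension estimate by a factor of $\Omega(\ln\ln n)$ for $t$ up to $p^*$. The key structural input is strict $2$-balancedness: for any proper subgraph $F \subsetneq H$ with $v_F \ge 3$, $(e_F - 1)/(v_F - 2) < (e_H - 1)/(v_H - 2)$, so at times $t$ near the threshold the typical number of $F$-copies through a fixed edge of $G_{t^-}$ is on a strictly smaller scale than it would be in $G(n, t)$. The plan is to expose the edges of a given $f$-extension $\phi$ one at a time in the order of their birthtimes and, at each step, bound the conditional probability that the next edge is actually \emph{accepted} into the $H$-free process, given the presence in $G_{t^-}$ of the previously exposed edges and of the edge $f$'s twin birthtime $t$. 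Regularity and strict $2$-balancedness of $H$ should make each such conditional acceptance probability noticeably smaller than $t$, giving the needed improvement when multiplied over the $e_H - 1$ edges of $\phi$.

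The main obstacle I foresee is establishing a usable pseudo-randomness statement for $G_{t^-}$ valid up to time $p^*$: namely, that for every subgraph $F$ of $H - f$ and every edge $e$, the number of $F$-copies in $G_{t^-}$ through $e$ is close to its expected value up to an acceptable multiplicative error. The natural route is a martingale analysis tracking subgraph-count trajectories step by step through the process, combined with a union bound over the $O(1)$ subgraph isomorphism types involved. The fact that the theorem produces $\ln\ln n$ rather than $\ln n$ (as in deeper analyses of the $H$-free process) suggests that this concentration is only needed at the probability-$1/2$ level, and so only a single bootstrapping round of the naive first-moment calculation is required, avoiding the much more delicate tracking needed to push all the way to the true threshold.
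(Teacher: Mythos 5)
There is a fundamental obstruction to the plan, already visible before any concentration analysis is attempted. Write $m(t) := \int_0^t \probcnd{f\in\MM_n(H)}{\beta(f)=s}\,ds$, so that $m(t)$ is (up to normalization) the expected density of accepted edges with birthtime below $t$, and note that the probability that a fixed $f$-extension $\phi$ sits entirely inside $G_{t^-}$ is of order $m(t)^{e_H-1}$, not $t^{e_H-1}$, because \emph{each} of its $e_H-1$ edges must be accepted, not merely born. If your target $\probcnd{f\in\MM_n(H)}{\beta(f)=t}\ge 1/2$ held for all $t\le p^*$, then $m(p^*)\ge p^*/2 = \Omega(n^{-(v_H-2)/(e_H-1)}(\ln\ln n)^{1/(e_H-1)})$, and hence the expected number of $f$-extensions present at time $p^*$ would be $\Theta(n^{v_H-2}m(p^*)^{e_H-1}) = \Theta(\ln\ln n)\to\infty$. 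Since the $\Theta(n^{v_H-2})$ extensions involve essentially disjoint sets of extra vertices, a routine second-moment (or Poisson approximation) argument then forces $\probcnd{f\in\MM_n(H)}{\beta(f)=p^*}\to 0$, contradicting the assumption. The target is therefore internally inconsistent: the conditional acceptance probability genuinely drops below any fixed constant well before $t$ reaches $p^*$, and a union bound cannot save the day because the expected number of obstructions is $\omega(1)$ precisely in the regime where you hope to win the $\ln\ln n$ factor. For the same reason, ``improving the single-extension estimate by a factor of $\ln\ln n$'' is not available: the improvement over the naive $t^{e_H-1}$ is $(t/m(t))^{e_H-1}$, and in the relevant range this is only of order $\ln\ln n/\ln\ln\ln n$, leaving the union bound $\omega(1)$.

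The paper resolves this by aiming at a decaying, not constant, conditional probability, and by conditioning on an interval rather than a point. It sets $c = \floor{(\ln n)^{1/(8e_H)}}$ and proves $\probcnd{f\in\MM_n(H)}{\beta(f) < c\,n^{-(v_H-2)/(e_H-1)}} = \Omega\bigl((\ln c)^{1/(e_H-1)}/c\bigr)$; multiplying by $\prob{\beta(f) < c\,n^{-(v_H-2)/(e_H-1)}}$ then yields the $(\ln\ln n)^{1/(e_H-1)}$ factor even though the conditional probability tends to $0$. Moreover, a ``single bootstrapping round'' is structurally insufficient to extract this logarithmic gain: a bounded-depth recursion is essentially Spencer's argument and only shows that the conditional probability stays bounded below for $\beta(f) < a\,n^{-(v_H-2)/(e_H-1)}$ with $a$ an arbitrary \emph{constant}, recovering $\Omega(n^{2-(v_H-2)/(e_H-1)})$ but not the $\ln\ln n$ improvement. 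The paper instead defines the ``survives'' event recursively through a tree-like structure $T_{f,D}$ of depth $D = \Theta((\ln n)^{1/4})$ built inside $\GG(n,\rho)$ for a $\rho$ which is larger than the birthtime cutoff by a factor $k = n^{o(1)}$; the decoupling of structure (fixed in $\GG(n,\rho)$) from birthtimes (analysed conditionally) is what makes the recursion tractable, and the $\ln c$ comes from a fixed-point analysis (Lemma~\ref{lemma:brt}) that requires $D$ to grow with $n$ so that the recursion's truncation error $2^{-D}$ is negligible. The trajectory-tracking/martingale route you sketch at the end is Bohman's method, which the paper explicitly contrasts with; it does give sharper bounds for $K_3$ and $K_4$, but it is a substantially different and more delicate argument than the one needed here.
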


Before discussing what was previously known about $e(\MM_n(H))$, we
state an immediate consequence of Theorem~\ref{thm::main} in extremal
graph theory.  Let $\ex(n,H)$ be the largest integer $m$ such that there
exists an $H$-free graph over $n$ vertices and $m$ edges.  For the
case where $H = K_{r,r}$, K\H{o}v\'{a}ri, S\'{o}s and Tur\'{a}n
proved that for fixed $r$, $\ex(n,K_{r,r}) = O(n^{2-1/r})$. For
$r\in\{2,3\}$ this upper bound is known to be tight, by explicit
constructions, due to Erd\H{o}s, R\'{e}nyi and S\'{o}s~\cite{ERS}
and Brown~\cite{Brown}. Since $\ex(n,K_{4,4}) \ge \ex(n,K_{3,3})$,
one has that $\ex(n,K_{4,4})  =  \Omega(n^{2-1/3})$. For fixed $r \ge
5$, Erd\H{o}s and Spencer~\cite{erdos74probabilistic} used a simple
application of the probabilistic method to prove $\ex(n,K_{r,r})  =
\Omega(n^{2-2/(r+1)})$.  Now note that Theorem~\ref{thm::main} implies a
lower bound for $\ex(n,H)$ for every regular, strictly $2$-balanced graph.
Hence, since $K_{r,r}$ is regular and strictly $2$-balanced, we obtain
the next lower bound on $\ex(n,K_{r,r})$ which improves asymptotically
the lower bound of Erd\H{o}s and Spencer for $r \ge 5$.
\begin{theorem}
For all $r \ge 5$, $\ex(n,K_{r,r}) =
\Omega\big(n^{2-2/(r+1)}(\ln\ln n)^{1/(r^2-1)}\big)$.
\end{theorem}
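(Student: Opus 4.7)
The plan is to derive the theorem as an immediate corollary of Theorem~\ref{thm::main} applied to $H = K_{r,r}$, so the work is really just unpacking parameters and checking hypotheses. First I would note that $K_{r,r}$ is $r$-regular and, as explicitly listed in the discussion preceding Theorem~\ref{thm::main}, strictly $2$-balanced; thus the hypotheses of the main theorem are met. Next I would record the parameters $v_H = 2r$ and $e_H = r^2$, from which
\begin{displaymath}
2 - \frac{v_H - 2}{e_H - 1} = 2 - \frac{2(r-1)}{(r-1)(r+1)} = 2 - \frac{2}{r+1},
\qquad
\frac{1}{e_H - 1} = \frac{1}{r^2-1}.
\end{displaymath}
Plugging these into Theorem~\ref{thm::main} yields
\begin{displaymath}
\expec{e(\MM_n(K_{r,r}))} = \Omega\!\left( n^{2 - 2/(r+1)} (\ln\ln n)^{1/(r^2-1)} \right).
\end{displaymath}

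The remaining step is the standard averaging argument. By construction $\MM_n(K_{r,r})$ is $K_{r,r}$-free with probability one, so every realisation of the process produces a $K_{r,r}$-free graph on $n$ vertices. Hence there must exist at least one realisation attaining the expected edge count, giving
\begin{displaymath}
\ex(n, K_{r,r}) \;\ge\; \expec{e(\MM_n(K_{r,r}))} \;=\; \Omega\!\left( n^{2 - 2/(r+1)} (\ln\ln n)^{1/(r^2-1)} \right),
\end{displaymath}
which is the desired bound.

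The only step that deserves a moment of care is verifying that $K_{r,r}$ is strictly $2$-balanced, since the excerpt only asserts this in passing. I would confirm this by observing that any proper subgraph $F \subsetneq K_{r,r}$ on $v_F \ge 3$ vertices is contained in some complete bipartite subgraph $K_{s,t}$ with $s \le t \le r$ and $(s,t) \ne (r,r)$, and checking the inequality $(st-1)/(s+t-2) < (r+1)/2 = (e_H-1)/(v_H-2)$ by elementary manipulation (or by directly quoting the standard fact that complete regular bipartite graphs are strictly $2$-balanced). This verification is the only non-trivial ingredient; there is no genuine obstacle beyond it, as the heavy lifting has already been done inside Theorem~\ref{thm::main}.
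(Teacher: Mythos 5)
Your proof matches the paper's own argument exactly: apply Theorem~\ref{thm::main} to $H=K_{r,r}$ (which is $r$-regular and strictly $2$-balanced), compute $v_H=2r$, $e_H=r^2$ to get the exponent $2-2/(r+1)$ and the log-power $1/(r^2-1)$, and then pass from expected edge count to the existence of a dense $K_{r,r}$-free graph. One small caveat in your optional verification of strict $2$-balancedness: a proper subgraph $F\subsetneq K_{r,r}$ with $v_F=2r$ (all vertices, fewer edges) is not contained in any $K_{s,t}$ with $(s,t)\neq(r,r)$, so that case has to be handled separately (it is trivial, since then $(e_F-1)/(v_F-2)<(e_H-1)/(v_H-2)$ holds simply because $e_F<e_H$), but this does not affect the correctness of the main argument.
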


\subsection{Previous bounds on $e(\MM_n(H))$}
\label{sec:previous}
The first to investigate the number of edges in $\MM_n(H)$ were
Ruci\'{n}ski and Wormald~\cite{RucinskiW92}, who considered the case
where $H=K_{1,r+1}$ is a star with $r+1$ edges. In that case, it was shown
than with probability approaching $1$ as $n$ goes to infinity, $\MM_n(H)$
is an extremal $H$-free graph (that is, every vertex in $\MM_n(H)$ has
degree exactly $r$, except perhaps for at most one vertex whose degree
is $r-1$).
Erd\H{o}s, Suen and Winkler~\cite{ErdosSW95} showed that
with probability that goes to $1$ as $n$ goes to $\infty$, 
$e(\MM_n(K_3)) = \Omega(n^{3/2})$.
Bollob\'{a}s and Riordan~\cite{Boll00} considered the case of $H \in
\{K_4,C_4\}$, and showed that with probability that goes to $1$ as $n$
goes to $\infty$, $e(\MM_n(K_4)) = \Omega(n^{8/5})$ and $e(\MM_n(C_4)) =
\Omega(n^{4/3})$. Osthus and Taraz~\cite{OsthusT01} generalized these
bounds for every strictly $2$-balanced graph $H$, showing that with
probability that goes to $1$ as $n$ goes to $\infty$, $e(\MM_n(H))
= \Omega(n^{2-(v_H-2)/(e_H-1)})$.  Note that the above lower bounds
trivially imply similar
lower bounds on the expectation of $e(\MM_n(H))$.
It is worth mentioning that all of the above lower bounds on the
expectation of $e(\MM_n(H))$ can be derived using standard correlation
inequalities.

The first
non-trivial lower bound on the expectation of $e(\MM_n(H))$ for some
graph $H$ that contains a cycle
was given by Spencer~\cite{Spencer0a}. Spencer showed that
for every constant $a$ there exists $n_0 = n_0(a)$ such that for every
$n \ge n_0$, $\expec{e(\MM_n(K_3))} \ge an^{3/2}$. In the same paper,
Spencer conjectured that $\expec{e(\MM_n(K_3))} = \Theta( n^{3/2}(\ln
n)^{1/2})$.
Recently, Bohman~\cite{Bohman} resolved Spencer's conjecture, showing
that indeed $\expec{e(\MM_n(K_3))} = \Theta(n^{3/2}(\ln n)^{1/2})$.
Bohman also proved a lower bound of $\Omega(n^{8/5}(\ln n)^{1/5})$ for
the expected number of edges in $\MM_n(K_4)$. In fact, Bohman's lower bounds
hold with probability that goes to $1$ as $n$ goes to $\infty$.
We discuss Bohman's argument and compare it to ours below.

As for upper bounds: The currently best upper bound on the expectation 
of $e(M_n(H))$, for $H$ that is strictly $2$-balanced over at least $4$
vertices is, by a result of Osthus and Taraz~\cite{OsthusT01}, at most
$O(n^{2-(v_H-2)/(e_H-1)}(\ln n)^{1/(\Delta_H-1)})$, where $\Delta_H$
denotes the maximum degree of $H$.

\subsection{Overview of the proof of Theorem~\ref{thm::main}}
\label{sec:1c}
Let $H$ be a regular, strictly $2$-balanced graph.  We would like
to analyse the random process generating $\MM_n(H)$. In order to do
this--and the reason will soon be apparent--it would be convenient
for us to think slightly differently about the definition of $\beta$.
Let $\GG(n,\rho)$ be the standard Erd\H{o}s-R\'{e}nyi random graph,
which is defined by keeping every edge of $K_n$ with probability $\rho$, 
independently of the other edges.
Then an alternative, equivalent
definition of $\beta$ is this: For every edge $f \in \GG(n,\rho)$
assign uniformly at random a birthtime $\beta(f) \in [0,\rho]$, and for
every edge $f \in K_n \setminus \GG(n,\rho)$ assign uniformly at random a
birthtime $\beta(f) \in (\rho,1]$.  Clearly, in this definition, every
edge $f \in K_n$ is assigned a uniformly random birthtime $\beta(f)
\in [0,1]$ and so this new definition is equivalent to the original
definition of $\beta$.  Note that $\GG(n,\rho)$ denotes here the set of
edges in $K_n$ whose birthtime is at most $\rho$.  The main advantage of
this new view of $\beta$ is that in order to analyse 
the event $\eventcnd{f \in \MM_n(H)}{\beta(f) < \rho'}$ for some $\rho'
\le \rho$, it is enough to consider only the distribution of the birthtimes
of edges of $\GG(n,\rho)$.  Hopefully, for our choice of $\rho$,
$\GG(n,\rho)$ will be structured enough so that we could take advantage
of the structures appearing in it and use them to find a non-trivial
lower bound on the probability of $\eventcnd{f \in \MM_n(H)}{\beta(f)
< \rho'}$.  This is the basic idea of the proof. We next describe,
\emph{informally}, what structures in $\GG(n,\rho)$ we hope to take
advantage of in order to prove Theorem~\ref{thm::main}.

For an edge $f \in K_n$, let $\Lambda(f,\rho)$ be the set of all $G
\subseteq \GG(n,\rho) \setminus \{f\}$ such that $G \cup \{f\}$ is
isomorphic to $H$.  Fix an edge $f \in K_n$ and let $\rho' \le \rho$.
Assume that the event $\event{\beta(f) < \rho'}$ occurs. Suppose now
that we want to estimate the probability of the event $\event{f \in
\MM_n(H)}$, which, by linearity of expectation, is essentially what we
need to do in order to prove Theorem~\ref{thm::main}.
We seek a sufficient condition for the event $\event{f \in \MM_n(H)}$.
One such trivial event is this: Say that \emph{$f$ survives-trivially}
if for every graph $G \in \Lambda(f,\rho)$ there exists an edge $g
\in G$ such that $\event{\beta(g) > \beta(f)}$ occurs. Clearly if
$f$ survives-trivially then we have $\event{f \in \MM_n(H)}$. We can
improve this simple sufficient condition as follows.  Say that an edge
\emph{$g$ doesn't survive} if there exists $G' \in \Lambda(g,\rho)$ such
that for every edge $g' \in G'$ we have $\event{\beta(g') < \beta(g)}$
and $g'$ survives-trivially.  Note that if $g$ doesn't survive then
$\event{g \notin \MM_n(H)}$ occurs. Now say that \emph{$f$ survives} if
for every graph $G \in \Lambda(f,\rho)$ there exists an edge $g \in G$
such that either $\event{\beta(g) > \beta(f)}$ or $g$ doesn't survive.
Then the event that $f$ survives implies $\event{f \in \MM_n(H)}$.

Observe that the event that $f$ survives was defined above using an
underlying tree-like structure of constant depth, in which the root is
$f$, the set of children of any non-leaf edge $g$ is $\Lambda(g,\rho)$
and for any $G \in \Lambda(g,\rho)$, the set of children of $G$ is
simply the set of edges in $G$.  Using the same idea as in the previous
paragraph, we could have defined the event that $f$ survives using an
underlying tree-like structure which is much deeper than the constant
depth tree-like structure that was used above.  Intuitively, the deeper
this tree-like structure is -- the better the chances are for $f$ to
survive. Therefore, we would be interested in defining the event that
$f$ survives using a rather deep underlying tree-like structure. We will
then be interested in lower bounding the probability that $f$ survives.

Now, in order to analyse the event that $f$ survives, it would be
useful if the underlying tree-like structure $T$ is \emph{good} in
the following sence: Every edge that appears in $T$ appears exactly
once\symbolfootnote[2]{In this informal discussion, we cannot hope
that $T$ would be good, since for example, $f$ appears as an edge in
some $G' \in \Lambda(g,\rho)$ for some $g \in G \in \Lambda(f,\rho)$. We
will define in Section~\ref{sec::2} the tree $T$ slightly differently,
so that this situation is avoided, while still maintaining that if $f$
survives then $\event{f \in \MM_n(H)}$ occurs.  Yet, for the purpose
of communicating the idea of the proof, it would be useful to assume
that $T$ could be good.}.  The advantage of $T$ being good is that
for many of the edges that appear in $T$, the events that these edges
survive or doesn't survive are pairwise independent. This property can
be used to analyse recursively the event that $f$ survives. Hence, it would
be very helpful if we can show that $T$ is good with high probability.
Showing this is a key ingredient of our proof.

Given the informal discussion above, the proof of Theorem~\ref{thm::main}
looks very roughly as follows. At the first part of the proof we consider
the graph $\GG(n,\rho)$ for a relatively large $\rho$, and show that
for a fixed edge $f \in K_n$, with probability that approaches $1$
as $n$ goes to $\infty$, we can associate with $f$ a tree $T$ which is
similar to the tree-like structure described above and which is both
good and deep.  Then, the second part has this structure: We assume
first that $\event{\beta(f) < \rho'}$ occurs for some suitably chosen
$\rho' \le \rho$. We also assume that the tree $T$ that is associated
with $f$ is good and deep, which occurs with high probability. Then,
we associate with $f$ and $T$ an event which is essentially the event
that $f$ survives, as described informally above, and argue that this
event implies $\event{f \in \MM_n(H)}$. Lastly, we give an explicit lower
bound on the probability of the 
event that we have associated with $f$ and $T$.  This will
give us a lower bound on the probability of $\event{f \in \MM_n(H)}$
conditioned on $\event{\beta(f) < \rho'}$. For our choice of $\rho'$,
this will imply Theorem~\ref{thm::main}.

\subsubsection{Comparison with previous work}
The \emph{basic} idea that we have outlined in the overview above
was used already by Erd\H{o}s, Suen and Winkler~\cite{ErdosSW95}
and by Spencer~\cite{Spencer0a} for the case $H=K_3$. (Their results
have been mentioned above.)  In~\cite{ErdosSW95}, the authors have
analyzed the event that an edge $f$ survives-trivially, as described
above, and considered \emph{implicitly} the graph $\GG(n,1)$. This
elementary argument gives a reasonable lower bound on the probability
of $\eventcnd{f \in \MM_n(K_3)}{\beta(f) < an^{-1/2}}$, for small
constant $a$ (e.g., $a=1$).  In~\cite{Spencer0a} the graph $\GG(n,1)$
was again considered implicitly, but a more general event -- essentially
the event that an edge $f$ survives, with an underlying tree-like structure 
of constant depth -- was analyzed; Using this, Spencer
was able to give a lower bound on the probability of $\eventcnd{f \in
\MM_n(K_3)}{\beta(f) < an^{-1/2}}$, for $a$ being arbitrary large,
but \emph{constant} independent of $n$.  As we have discussed above,
we consider \emph{explicitly} the graph $\GG(n,\rho)$ and we do that for
some suitably chosen $\rho < 1$.  This is the key to our improvement.  For
example, for the case of $H=K_3$, this enables us to give a non-trivial
lower bound on the probability of $\eventcnd{f \in \MM_n(K_3)}{\beta(f)
< an^{-1/2}}$, for $a = \floor{(\ln n)^{1/24}}$.  Moreover, our arguments
apply for every other regular, strictly $2$-balanced graph.

\subsubsection{Comparison with Bohman's argument}
As stated above, Bohman~\cite{Bohman} have proved stronger bounds than
those given in Theorem~\ref{thm::main}, for the case where $H \in \{K_3,
K_4\}$.  To do this, Bohman uses the differential equation method. The
basic argument, applied for the case $H = K_3$, can be described as
follows.
First, a collection of random variables that evolve throughout the
random process is introduced and tracked throughout the evolution of
$\MM_n(K_3)$.  This collection includes, for example, the random variable
$O_i$, which denotes the set of edges that have not yet been traversed by
the process, and which can be added to the current graph without forming
a triangle, after exactly $i$ edges have been added to the evolving graph.
Now, at certain times during the process (i.e., at those times in which
new edges are added to the evolving graph), Bohman expresses the expected
change in the values of the random variables in the collection, using
the same set of random variables. This allows one to express the random
variables in the collection using the solution to an autonomous system
of ordinary differential equations.
The main technical effort in Bohman's work
then shows that the random variables in the collection are tightly concentrated
around the trajectory given by the solution to this system.
The particular solution to the system then implies that with high
probability $O_I$ is still large for $I := n^{3/2}(\ln n)^{1/2}/32$. 
This gives Bohman's lower bound on the expected number of edges in $\MM_n(K_3)$.

We remark that Bohman's argument probably can be used to analyse the
random process generating $\MM_n(H)$ for $H \notin \{K_3, K_4\}$, and
this can most likely lead to stronger lower bounds than those given
in Theorem~\ref{thm::main}.  In comparison with Bohman's argument, our
argument is more direct in the sence that it considers a single edge and
estimates directly the probability of it being included in $\MM_n(H)$.
We remark that our argument can be strengthened and
generalized in the following
way for the case $H=K_3$. One can use our basic argument so as to give
an asymptotically tight expression for the probability that a fixed 
triangle-free graph $F$ is included in $\MM_n(K_3)$, conditioned on the
event that the edges of $F$ all have birthtimes which are relatively, but
not trivially small.  This, in turn, can be used to tackle the following 
question, which is left open even after Bohman's breakthrough.
Suppose we trim the random process generating $\MM_n(K_3)$ right after
every edge whose birthtime is less than $cn^{-1/2}$ has been traversed,
where $c = (\ln n)^{1/24}$. That is, let us consider the \emph{trimmed
graph} $\MM_n(K_3) \cap \{f : \beta(f) < cn^{-1/2}\}$.  We may ask what
is the number of paths of length $2$ in the trimmed graph.  Bohman's
argument does not answer this question, but rather places an upper bound
of $\binom{n}{2} \cdot (\ln n)^2$ on that number.  Yet, the above-mentioned 
strengthening and generalization together with the second moment method 
can be used to show that the number of paths of length $2$ in the trimmed 
graph is concentrated around $\binom{n}{2} \cdot \ln c$. Similarly, one 
can prove concentration results for the number of small cycles in the 
trimmed graph.  
%

\subsection{Organization of the paper}
In Section~\ref{sec::2} we give the basic definitions we use throughout
the paper and in particular, we give the formal definition of what we
have referred to above as a good tree-like structure. We also state in
Section~\ref{sec::2} the two main lemmas we prove throughout the paper
and argue that these lemmas imply the validity of Theorem~\ref{thm::main}.
The two main lemmas are proved in Sections~\ref{sec::3}~and~\ref{sec::4}
and these two sections correspond to the two parts of the proof that
were sketched at Section~\ref{sec:1c}.

\subsection{Basic notation and conventions}
We use $K_n$ to denote the complete graph over the vertex set $[n] :=
\{1,2,\ldots,n\}$. We set $[0] := \emptyset$. We use $f,g,g'$ to denote
edges of $K_n$ and $F,G,G'$ to denote subgraphs of $K_n$ or subgraphs
of any other fixed graph.
Throughout the paper, the hidden constants in the big-O and big-Omega
notation, are either absolute constants or depend only on an underlying
fixed graph $H$ which should be understood from the context.  If $x=x(n)$
and $y=y(n)$ are functions of $n$, we write $y = o(x)$ if $y/x$ goes to
$0$ as $n$ goes to $\infty$ and $y = \omega(x)$ if $y/x$ goes to
$\infty$ as $n$ goes to $\infty$.
%

\section{Main lemmas and proof of Theorem~\ref{thm::main}}
\label{sec::2}
In this section we give the overall structure of the proof of
Theorem~\ref{thm::main}, including the required basic definitions and
two key lemmas--whose validity imply the theorem. We fix once and for
the rest of this paper a regular, strictly $2$-balanced graph $H$ and
prove Theorem~\ref{thm::main} for that specific $H$.  We always think 
of $n$ as being sufficiently large, and define the following functions of $n$.
\begin{definition}
\label{def::kk}
Define
\begin{eqnarray*}
k &=& k(n) \,\,\quad :=\quad  n^{(\ln n)^{-1/2}}, \\
\rho  &=& \rho(n) \,\,\quad :=\quad  kn^{-(v_H-2)/(e_H-1)}, \\
c &=& c(n) \,\,\, \quad :=\quad  \floor{(\ln n)^{1/(8e_H)}}, \qquad\textrm{and} \\
D &=& D(n) \quad :=\quad  2\floor{(\ln n)^{1/4}}+1. 
\end{eqnarray*}
\end{definition}
In order to prove Theorem~\ref{thm::main}, we will show that for our
fixed graph $H$, and for every edge $f \in K_n$,
\begin{eqnarray}
\label{eq::main}
\Probcnd{f \in \MM_n(H)}{\beta(f) < cn^{-(v_H-2)/(e_H-1)}} =
  \Omega\bigg(\frac{(\ln c)^{1/(e_H-1)}}{c}\bigg).
\end{eqnarray}
Note that~(\ref{eq::main}) implies Theorem~\ref{thm::main}: Since
$\prob{\beta(f) < cn^{-(v_H-2)/(e_H-1)}} = cn^{-(v_H-2)/(e_H-1)}$, it
follows from~(\ref{eq::main}) that for every $f \in K_n$, $\prob{f \in
\MM_n(H)} = \Omega(n^{-(v_H-2)/(e_H-1)} (\ln c)^{1/(e_H-1)})$. Using the
fact that $\ln c = \Omega(\ln\ln n)$ and using linearity of expectation,
this last bound implies Theorem~\ref{thm::main}.  It thus remains to
prove~(\ref{eq::main}).  The rest of this section is devoted to outlining
the proof of~(\ref{eq::main}).

Recall that for an edge $f \in K_n$, we define $\Lambda(f,\rho)$ to be
the set of all $G \subseteq \GG(n,\rho) \setminus \{f\}$ such that $G
\cup \{f\}$ is isomorphic to $H$.  We now set up to define what we have
referred to in the introduction as a good tree-like structure.

A rooted tree $T$ is a directed tree with a distinguished node, called
the \emph{root}, which is connected by a directed path to any other
node in $T$. If $u$ is a node in $T$ then the set of nodes that are
adjacent\symbolfootnote[3]{We say that node $v$ is adjacent to node $u$
in a given directed graph, if there is a directed edge from $u$ to $v$.}
to $u$ in $T$ is denoted by $\Gamma_T(u)$. The height of a node $u$
in a rooted tree $T$ is the length of the longest path from $u$ to a
leaf. The height of a rooted tree is the height of its root.  We shall
consider labeled (rooted) trees. If $u$ is a node in a labeled tree $T$,
we denote by $L_T(u)$ the label of the node $u$ in $T$.

\begin{definition}
[$T_{f,d}$]
\label{def::afdtree}
Let $f \in K_n$ and $d \in \N$.  We define inductively a labeled,
rooted tree $T_{f,d}$ of height $2d$.  The nodes at even distance from
the root will be labeled with edges of $K_n$.  The nodes at odd distance
from the root will be labeled with subgraphs of $K_n$.
\begin{itemize}
\item $T_{f,1}$: 
\begin{itemize}
\item The root $v_0$ of $T_{f,1}$ is labeled with the edge $f$.
\item For every subgraph $G_1 \in \Lambda(f,\rho)$: Set a new node $u_1$
which is adjacent to $v_0$ and whose label is $G_1$;  Furthermore, for
each edge $g \in G_1$ set a new node $v_1$ which is adjacent to $u_1$
and whose label is $g$.
\end{itemize}
\item $T_{f,d}$, $d \ge 2$: 
We construct the tree $T_{f,d}$ by adding new nodes to $T=T_{f,d-1}$ as
follows.  Let $(v_0,u_1,v_1,\ldots,u_{d-1},v_{d-1})$ be a directed path
in $T_{f,d-1}$ from the root $v_0$ to a leaf $v_{d-1}$.  Let $g_{d-1}
= L_T(v_{d-1})$ and $g_{d-2} = L_T(v_{d-2})$.  For every subgraph $G_d
\in \Lambda(g_{d-1},\rho)$ such that $g_{d-2} \notin G_d$ do: Set a
new node $u_d$ which is adjacent to $v_{d-1}$ and whose label is $G_d$;
Furthermore, for each edge $g_d \in G_d$ set a new node $v_d$ which is
adjacent to $u_d$ and whose label is $g_d$.
\end{itemize}
\end{definition}

\begin{definition}
[good tree]
\label{def::goodtree}
Let $f \in K_n$ and $d \in \N$. Consider the tree $T=T_{f,d}$ and let
$v_0$ denote the root of $T$. We say that $T$ is \emph{good} if the
following three properties hold:
\begin{itemize}
\item[P1] If $G$ is the label of a node $u$ at odd distance from $v_0$
then $G \cap \{f\} = \emptyset$.
\item[P2] If $G,G'$ are the labels of two distinct nodes at odd distance
from $v_0$ then $G \cap G' = \emptyset$.
\item[P3] If $g$ is the label of a non-leaf node $v$ at even distance
from $v_0$ then $|\Gamma_T(v)| = |\Lambda(g,\rho)| - O(1)$.
\end{itemize}
\end{definition}

Recall the definition of $\rho$ and note that the expected size
of $\Lambda(g,\rho)$ is $\lambda k^{e_H-1}$, where $\lambda =
\lambda'(1-o(1))$ and $\lambda' \le 1$ depends only on $H$.
(This follows from the fact that for every edge $g \in K_n$, the
cardinality of $\Lambda(g,1)$ is between $\binom{n-2}{v_H-2}$ and
$(v_H-2)! \binom{n-2}{v_H-2}$, and from the fact that for every $G
\in \Lambda(g,1)$, the probability of $\event{G \in \Lambda(g,\rho)}$
is $\rho^{e_H-1}$.)  Define the event $E_1$ to be the event that for
every edge $g \in K_n$,
\begin{displaymath}
\lambda k^{e_H-1} - k^{e_H/2-1/3}/2 \le
|\Lambda(g,\rho)| \le \lambda k^{e_H-1} + k^{e_H/2-1/3}/2.
\end{displaymath}
For an edge $f \in K_n$, let $E_2(f)$ be the event that $T_{f,D}$ is good.
The next lemma is proved in Section~\ref{sec::3}.
\begin{lemma}
\label{lemma::toprove1}
For every edge $f \in K_n$, 
\begin{displaymath}
\prob{E_2(f) \cap E_1} = 1-o(1).
\end{displaymath}
\end{lemma}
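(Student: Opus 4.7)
The plan is to bound $\prob{E_1^c}$ and $\prob{E_2(f)^c}$ separately; in both cases, strict $2$-balancedness of $H$ is the key tool, guaranteeing that ``dense enough'' configurations of overlapping copies of $H$ are rare in $\GG(n,\rho)$.

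For $E_1$, I would fix an edge $g \in K_n$ and treat $|\Lambda(g,\rho)|$ as a sum of indicators of copies of $H$ in $\GG(n,\rho)$ extending $g$. Its mean is $\lambda k^{e_H-1}$. A standard Janson-style variance computation, together with strict $2$-balancedness (which forces each proper subgraph $F \subsetneq H$ with $v_F \ge 3$ to contribute only a $n^{-\Omega(1)} k^{-(e_F-1)}$ fraction of the squared mean), yields variance of order $k^{e_H-1}$, so the standard deviation is $\Theta(k^{(e_H-1)/2})$. The deviation required in the definition of $E_1$, namely $k^{e_H/2 - 1/3}/2 = k^{(e_H-1)/2 + 1/6}/2$, is a factor $k^{1/6}$ larger than the standard deviation, so Janson's inequality (for the lower tail) and a Kim--Vu style bound (for the upper tail) give a tail bound of shape $\exp(-k^{\Omega(1)})$. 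Since $k = n^{(\ln n)^{-1/2}}$ is super-polylogarithmic, this easily beats the factor $\binom{n}{2}$ from a union bound over all edges $g$, yielding $\prob{E_1^c} = o(1)$.

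For $E_2(f)$ I would analyse each of P1, P2, P3 by showing that a failure forces an unusually dense configuration in $\GG(n,\rho)$. The ``generic'' subgraph underlying the tree construction---a linear chain of $d$ copies of $H$ sharing consecutive single edges---has exactly the same $(e-1)/(v-2)$ density as $H$ itself, so its expected count in $\GG(n,\rho)$ rooted at $f$ is of order $k^{(e_H-1)d} = n^{o(1)}$ for $d \le D$. A failure of P3 at some node $v$ adds a pinned copy of $H$ through two prescribed edges (the label of $v$ and the grandfather label), giving an additional subgraph whose $(e-1)/(v-2)$ density strictly exceeds $(e_H-1)/(v_H-2)$ by strict $2$-balancedness applied to ``$H$ rooted at two edges''. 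A failure of P2 fuses two subchains at an extra shared edge, and a failure of P1 closes a chain cyclically through $f$; in both of these cases the union also becomes strictly denser than a linear chain. In every case the density gap is a fixed $\delta = \delta(H) > 0$, so the expected number of bad configurations of each isomorphism type is at most $n^{-\delta}$ times a subpolynomial factor in $k$. Since the number of isomorphism types of configurations built from $O(D)$ copies of $H$ glued along edges is at most $\exp(O(D \ln k)) = \exp(O((\ln n)^{3/4})) = n^{o(1)}$, a first-moment/Markov union bound over these types gives $\prob{E_2(f)^c} = o(1)$.

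The main obstacle I expect is the combinatorial bookkeeping for P1 and P2: one must enumerate the finitely many isomorphism types of ``$d$ copies of $H$ glued along consecutive edges with one additional coincidence'' and verify, using strict $2$-balancedness, that the extra coincidence strictly increases $(e-1)/(v-2)$ by $\delta(H) > 0$. Once this base case is checked, it should propagate to longer chains by induction on depth, because each additional copy of $H$ attached via a single shared edge preserves the density. The net saving per extra coincidence is a polynomial power of $n$, which comfortably dominates the $n^{o(1)}$ enumeration blow-up from the depth-$D$ tree.
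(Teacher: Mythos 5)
Your high-level plan matches the paper almost exactly. For $E_1$, the lower tail via Janson is precisely what the paper does, including the same variance estimate $\Delta = O(k^{e_H-1})$ driven by strict $2$-balancedness. For the upper tail, however, you propose a Kim--Vu type polynomial-concentration bound, whereas the paper uses Spencer's deletion-style argument from \cite{DBLP:journals/jct/Spencer90a}: it bounds $|\Lambda(g,\rho)| \le W_1 + 2W_2W_3$ where $W_1$, $W_2$, $W_3$ are, respectively, the maximum independent set, maximum induced matching, and maximum degree of the intersection graph $\G$ on $\Lambda(g,\rho)$, and then controls $W_1$ via a Janson-type upper-tail bound and $W_2$, $W_3$ via first-moment counts of ``stars'' and induced matchings. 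Both routes can be made to work; Spencer's technique is more elementary and self-contained, while Kim--Vu requires you to estimate $\E_jY$ for all $j$ and check that the window $k^{1/6}\sqrt{\E X}$ translates into $\lambda \approx k^{1/(6(e_H-1))}$, which is indeed $\omega(\log n)$ since $\ln k = (\ln n)^{1/2}$ -- so your version also yields $n^{-\omega(1)}$, but you should verify $\E'Y$ carefully (note $\E_{e_H-1}Y = \Theta(1)$, not $n^{-\Omega(1)}$).

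For $E_2(f)$, your strategy -- a first-moment bound over ``chains of copies of $H$ glued along edges with one extra coincidence'', using that the generic chain has exactly $H$'s density while any extra coincidence makes the configuration strictly denser, saving a polynomial in $n$ that dominates the $n^{o(1)}$ enumeration blow-up -- is precisely the paper's approach. The paper executes the bookkeeping you flag as the ``main obstacle'' by introducing \emph{bad sequences} (Definition~\ref{def::bad}), the event $E_3$ that none is present, and Propositions~\ref{prop:lkj} and~\ref{eq:pe3}: $E_3$ implies $E_2(f)$ (a careful inductive case analysis over the tree, using regularity of $H$ and Fact~\ref{fact:f1} to handle P1, P2, P3 uniformly), and $\expec{Z} = o(1)$ by exactly the density count you sketch, split into $\Seq_{d,1}$ (coincidence via shared vertices, no edge) and $\Seq_{d,2}$ (coincidence via a shared proper subgraph $F$), with Fact~\ref{fact::ufact} supplying the uniform $n^{-\epsilon_H}$ saving. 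One small slip: your estimate $\exp(O(D\ln k))$ for the number of isomorphism types is larger than needed -- it is really $\exp(O(D))$ since each gluing step has $O(1)$ combinatorial choices -- but since $D = O((\ln n)^{1/4})$ both are $n^{o(1)}$, so the conclusion is unaffected. You should also note that $P3$ is handled as a byproduct of the $E_3$ analysis: assuming $E_3$, any $G_d \in \Lambda(g_{d-1},\rho)$ containing $g_{d-2}$ is forced to have the same vertex set as $G_{d-1}$, of which there are $O(1)$.
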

Assuming that the event $E_2(f) \cap E_1$ occurs, the tree $T_{f,D}$
is exactly what we have
referred to informally in the introduction as a good tree-like
structure. Assuming that such a structure exists in $\GG(n,\rho)$,
we derive in Section~\ref{sec::4} a lower bound on the probability
of $\event{f \in \MM_n(H)}$, conditioned on $\event{\beta(f) <
cn^{-(v_H-2)/(e_H-1)}}$. Formally, we prove the next lemma.
\begin{lemma}
\label{lemma::toprove2}
For every edge $f \in K_n$,
\begin{eqnarray*}
\Probcnd{f \in \MM_n(H)}{E_2(f) \cap E_1,
\beta(f) < cn^{-(v_H-2)/(e_H-1)}}
= \Omega\bigg(\frac{(\ln c)^{1/(e_H-1)}}{c}\bigg).
\end{eqnarray*}
\end{lemma}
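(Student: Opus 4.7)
The plan is to define, recursively along the tree $T_{f,D}$, an event $\mathcal{S}_f$ that implies $f \in \MM_n(H)$ and whose probability (conditional on $E_1 \cap E_2(f)$ and on $\beta(f)$) can be pushed through an alternating recursion using the independence provided by goodness. For each edge-node $v$ of $T_{f,D}$ labeled by an edge $g$, attach an event $\mathcal{S}_v$ defined top-down with alternating parity: $v$ ``survives'' if for every child subgraph-node $u \in \Gamma_T(v)$ there is a grandchild edge-node $v'$ labeled by some $g'$ with either $\beta(g') > \beta(g)$ or $g'$ ``doesn't survive''; and $v$ ``doesn't survive'' if there is a child $u \in \Gamma_T(v)$ such that every grandchild $v'$ labeled $g'$ satisfies $\beta(g') < \beta(g)$ and $g'$ ``survives''. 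These events alternate along the depth of the tree, with $\mathcal{S}_f$ being the survive event at the root and the leaves bottoming out at $\Gamma_T(v) = \emptyset$ (vacuously true or false by parity). A straightforward induction on depth, using property $P1$ to rule out circular references to $f$, shows that ``$g$ survives'' implies $g \in \MM_n(H)$ and ``$g$ doesn't survive'' implies $g \notin \MM_n(H)$; in particular $\mathcal{S}_f \Rightarrow f \in \MM_n(H)$.

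Next I would set up a recursion for the conditional probabilities
\[
p_j(t) \;:=\; \Pr[\mathcal{S}_v \mid \beta(g) = t,\; E_1 \cap E_2(f)],
\]
where $v$ is any edge-node at depth $2j$ labeled $g$ (by symmetry only $j$ and $t$ matter). Property $P2$ makes the birthtimes of all distinct edges appearing in the tree mutually independent, and $P3$ combined with $E_1$ gives each internal edge-node exactly $\lambda k^{e_H-1}(1 \pm o(1))$ children subgraph-nodes. These independences let me rewrite the survive / doesn't-survive event as a product over children subgraphs and, within each subgraph, as a product over its $e_H-1$ edges; writing $r := e_H - 1$ this yields
\[
p_j(t) \;\approx\; \prod_G \Bigl(1 - \Bigl[\tfrac{1}{\rho}\int_0^t (1 - p_{j+1}(s))\, ds\Bigr]^{r}\Bigr) \qquad (j \text{ even}),
\]
\[
p_j(t) \;\approx\; 1 - \prod_G \Bigl(1 - \Bigl[\tfrac{1}{\rho}\int_0^t p_{j+1}(s)\, ds\Bigr]^{r}\Bigr) \qquad (j \text{ odd}),
\]
with the Poisson approximation $(1-y)^N \approx \exp(-Ny)$ valid throughout because each bracketed quantity is $O(1/k^r)$. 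Rescaling $x = kt/\rho$ and $\tilde{p}_j(x) := p_j(x\rho/k)$ converts these into an $n$-free recursion of the shape $\tilde{p}_j(x) \approx \exp(-\lambda(\int_0^x (1 - \tilde{p}_{j+1}))^r)$ or $1 - \exp(-\lambda(\int_0^x \tilde{p}_{j+1})^r)$.

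Finally I would unwind the recursion from the vacuous leaf base case ($\tilde{p}_D \equiv 1$ or $0$, depending on the parity of $D$) up to the root. The main obstacle is the functional-iteration analysis: one has to track, level by level, the ``saturation scale'' on which $\tilde{p}_j$ transitions between its near-$0$ and near-$1$ regimes, and show that this scale grows by a bounded multiplicative factor per round. After the $D = 2\lfloor (\ln n)^{1/4}\rfloor + 1$ rounds permitted by the depth of the tree, the saturation scale becomes comparable to $c = \lfloor (\ln n)^{1/(8 e_H)}\rfloor$; an explicit estimate of the outermost integral at $x = c$ then yields $\tilde{p}_0(c) = \Omega((\ln c)^{1/r}/c)$. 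The choice $c \ll k$ keeps the Poisson approximation valid throughout. Averaging the pointwise bound on $p_0(t) = \tilde{p}_0(kt/\rho)$ over $\beta(f)$ uniform on $[0, cn^{-(v_H-2)/(e_H-1)}]$ loses at most a constant factor and yields the stated conditional lower bound.
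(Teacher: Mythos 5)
Your high-level plan agrees with the paper's: define recursive survive/don't-survive events along $T_{f,D}$ (these are exactly the paper's $B(T_{g,d})$ and $\ol{B(T_{g,d})}$ of Definition~\ref{def::block}), use properties P2 and P3 of a good tree to get independence and controlled outdegree, convert to a functional recursion, and unwind. But two of the claims and the final step, which is the technical heart of the lemma, have real problems.

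First, a smaller issue in the implication step: the asserted implication ``$g$ survives'' $\Rightarrow g \in \MM_n(H)$ is \emph{false} for internal nodes $g$. By Definition~\ref{def::afdtree}, the set $\Gamma_T(g)$ at an internal node omits every $G \in \Lambda(g,\rho)$ containing the parent edge $g_{d-2}$, so ``$g$ survives'' says nothing about those copies of $H$ and does not imply $g \in \MM_n(H)$. Proposition~\ref{prop::pqp} only asserts the other direction ($B(T_{g,d}) \Rightarrow g \notin \MM_n(H)$ at even height), and its proof handles the omitted $G$'s by exploiting $\beta(g_{d-2}) > \beta(g')$ along the descent chain. The root case $\ol{B(T_{f,D})} \Rightarrow f \in \MM_n(H)$ then works because $\Gamma_T(f) = \Lambda(f,\rho)$ exactly. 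You would need to restate your induction in this one-sided form.

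Second, and more seriously, the final step — extracting $\Omega((\ln c)^{1/(e_H-1)}/c)$ from the recursion — is not carried out, and the picture you propose does not match what actually happens. You write the product as $\prod_G$, but the number of factors is $|\Gamma_T(g)|$, which under P3 and $E_1$ varies from node to node by $\Theta(k^{e_H/2-1/3})$; the products then depend on the local tree structure and your recursion is not a single well-defined function iteration. The paper resolves this by first passing to the pruned tree $RT_{f,D}$, in which every internal node has exactly $\floor{\lambda k^{e_H-1}} - \floor{k^{e_H/2-1/3}}$ children, proving the bound there, and then showing in a separate Lemma~\ref{lemma:asmeq} that $q_{f,D}(0,c)$ and $p_D(0,c)$ agree to $1 \pm o(1)$. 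Your proposal omits all of this. As for the iteration itself, ``the saturation scale grows by a bounded multiplicative factor per round until it reaches $c$'' is not how the paper argues, and I do not see how to make it rigorous: the base case $q_0 \equiv 1$ gives $q_1(x) \approx \exp(-\lambda x^{e_H-1})$, which is already vanishingly small at $x = c$, and the subsequent iterates oscillate around the fixed point rather than marching a saturation front outward. What the paper actually proves is a contraction/stability statement (Proposition~\ref{lemma::translate}: $p_D(0,x) \ge p_{D-1}(0,x) - 2^{-D+1}$, valid once $D - 1 \gtrsim c^{e_H-1}$), which turns the single recursion step into an almost self-consistent equation for $p_D$; a two-case dichotomy against the threshold $\tau(i) = ((100\lambda)^{-1}\ln i)^{1/(e_H-1)}/(i+1) - 2^{-D+1}$ then solves that equation at scale $c$ and yields the claimed bound. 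That dichotomy is the essential idea your proposal is missing, and without it (or a genuinely worked-out substitute controlling the iteration over $D$ levels) the proof does not close.
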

Trivially\footnote{See the errata at the end of the paper.}, Lemmas~\ref{lemma::toprove1}~and~\ref{lemma::toprove2}
imply~(\ref{eq::main}) and hence Theorem~\ref{thm::main}.  Therefore,
in order to prove Theorem~\ref{thm::main}, it remains to prove these
two lemmas.

\section{Proof of Lemma~\ref{lemma::toprove1}}
\label{sec::3}
The proof is divided to two parts. In the first part, given at
Section~\ref{sec::31}, we lower bound the probability of the event
$E_1$. In the second part we lower bound the probability of the event
$E_2(f)$. Since these two lower bounds would be shown to be $1-o(1)$,
Lemma~\ref{lemma::toprove1} will follow.

\subsection{Bounding $\prob{E_1}$}
\label{sec::31}
In this subsection we show that the probability of the event $E_1$ is
$1-o(1)$.  In order to do this, since there are at most $n^2$ edges in
$K_n$, it suffices to fix an edge $g \in K_n$ and show that the following
two equalities hold:
\begin{eqnarray}
\label{eq::treea}
\Prob{|\Lambda(g,\rho)| \ge \lambda k^{e_H-1} - k^{e_H/2-1/3}/2} =
   1-n^{-\omega(1)},\\
\label{eq::treeb}
\Prob{|\Lambda(g,\rho)| \le \lambda k^{e_H-1} + k^{e_H/2-1/3}/2} = 
   1-n^{-\omega(1)}.
\end{eqnarray}
Throughout this section we will use several times the following fact.
\begin{fact}
\label{fact::ufact}
There exists a constant $\epsilon_H > 0$, that depends only on $H$, such
that the following holds for all sufficiently large $n$: 
If $F \subsetneq H$ and $v_F \ge 3$ then 
\begin{eqnarray*}
n^{v_H-v_F}\rho^{e_H-e_F} \le n^{-\epsilon_H}.
\end{eqnarray*}
\end{fact}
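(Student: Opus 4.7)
The plan is to unpack the definitions of $\rho$ and $k$ and reduce the claim to the strict-$2$-balanced inequality for $H$. First I would substitute $\rho = kn^{-(v_H-2)/(e_H-1)}$ into the left-hand side to obtain
\[
n^{v_H-v_F}\rho^{e_H-e_F} \;=\; k^{e_H-e_F}\cdot n^{\alpha(F)},
\]
where $\alpha(F) := (v_H-v_F) - (v_H-2)(e_H-e_F)/(e_H-1)$. A direct expansion shows that
\[
(e_H-1)\alpha(F) \;=\; -\bigl[(e_H-1)(v_F-2) - (e_F-1)(v_H-2)\bigr],
\]
so the strict-$2$-balanced hypothesis $(e_H-1)/(v_H-2) > (e_F-1)/(v_F-2)$ is exactly the statement $\alpha(F) < 0$ for every proper subgraph $F \subsetneq H$ with $v_F \ge 3$ (both denominators being positive since $v_H,v_F\ge 3$).

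Next, since $H$ is fixed the set $\{F : F \subsetneq H,\, v_F \ge 3\}$ is finite, so I would define
\[
\delta_H \;:=\; \min_{F \subsetneq H,\, v_F \ge 3} \bigl(-\alpha(F)\bigr),
\]
which is a strictly positive constant depending only on $H$. Then $n^{\alpha(F)} \le n^{-\delta_H}$ uniformly in $F$.

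Finally I would absorb the prefactor $k^{e_H-e_F}$ using the sub-polynomial growth of $k$. By definition $k = n^{(\ln n)^{-1/2}} = n^{o(1)}$, and $e_H-e_F \le e_H$ is an absolute constant depending only on $H$, so $k^{e_H-e_F} \le n^{\delta_H/2}$ for all $n$ large enough in terms of $H$. Combining the two bounds and setting $\epsilon_H := \delta_H/2$ yields $n^{v_H-v_F}\rho^{e_H-e_F} \le n^{-\epsilon_H}$, as required.

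The only genuine step here is the algebraic identity expressing $(e_H-1)\alpha(F)$ as the negative of the strict-$2$-balanced gap of $F$ in $H$; everything else is routine bookkeeping using the finiteness of the family of candidate $F$'s and the $n^{o(1)}$ growth of $k$. I do not anticipate any combinatorial or probabilistic obstacle.
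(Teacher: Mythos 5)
Your proof is correct and follows essentially the same route as the paper: both arguments substitute the definition $\rho = k n^{-(v_H-2)/(e_H-1)}$, reduce the exponent of $n$ to the strict-$2$-balanced gap (your $\alpha(F)$ is exactly the paper's $-v_F+2+(e_F-1)(v_H-2)/(e_H-1)$), absorb the $k^{O(1)} = n^{o(1)}$ prefactor, and take the worst case over the finitely many proper subgraphs $F$ with $v_F \ge 3$. The only difference is cosmetic — you substitute $\rho$ at once, whereas the paper factors $n^{v_H-v_F}\rho^{e_H-e_F}$ as $(n^{v_H-2}\rho^{e_H-1})\cdot(n^{-v_F+2}\rho^{-e_F+1})$ before evaluating each piece.
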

\begin{proof}
Fix $F \subsetneq H$ with $v_F \ge 3$.  Since $H$ is strictly $2$-balanced,
we have that $(e_F-1)(v_H-2)/(e_H-1) <
v_F-2$. Hence, there exists a constant $\epsilon_H'>0$ such that
$n^{-v_F+2}\rho^{-e_F+1}  =  n^{-v_F+2 + (e_F-1)(v_H-2)/(e_H-1)+o(1)}
\le n^{-\epsilon_H'+o(1)}$ (here we have also used the fact that
$k = n^{o(1)}$).  We also
note that $n^{v_H-2}\rho^{e_H-1} = k^{e_H-1} = n^{o(1)}$.  Therefore,
$n^{v_H-v_F} \rho^{e_H-e_F} = n^{v_H-2-v_F+2}\rho^{e_H-1-e_F+1} \le
n^{-\epsilon_H'+o(1)}$. To complete the proof, take the subgraph $F
\subsetneq H$ with $v_F \ge 3$ which minimizes $\epsilon_H'$ above,
and for that particular $\epsilon_H'$, take $\epsilon_H = \epsilon_H'/2$.
\end{proof}
We prove~(\ref{eq::treea})~and~(\ref{eq::treeb}) in
Sections~\ref{sec::lowertail}~and~\ref{sec::uppertail}, respectively.

\subsubsection{The lower tail}
\label{sec::lowertail}
For $G \in \Lambda(g,1)$, let $X_G$ be the indicator random
variable for the event $\event{G \subseteq \GG(n,\rho)}$.  Let $X =
\sum_{G\in\Lambda(g,1)} X_G$.  Then $|\Lambda(g,\rho)| = X$ and $\expec{X}
= \lambda k^{e_H-1}$.  Let $\Delta = \sum_{G, G'} \expec{X_G \cap X_{G'}}$,
where the sum ranges over all ordered pairs $G, G' \in \Lambda(g,1)$ with
$G \cap G' \ne \emptyset$ (this includes the pairs $G, G'$ with $G=G'$).
Then from Janson~\cite{DBLP:journals/rsa/Janson90a} we have that for
every $0 \le t \le \expec{X}$,
\begin{eqnarray}
\label{eq::janson}
\prob{X \le \expec{X} - t} \le \exp\Big(-\frac{t^2}{2\Delta}\Big).
\end{eqnarray}
We now bound $\Delta$ from above.  In order to do this, first note
that for every $F \subseteq H$ and for every $G \in \Lambda(g,1)$,
the number of subgraphs $G' \in\Lambda(g,1)$ such that $(G\cup\{g\})
\cap (G'\cup\{g\})$ is isomorphic to $F$ is at most $O(n^{v_H-v_F})$.
Also, the number of subgraphs $G \in \Lambda(g,1)$ is trivially at most
$n^{v_H-2}$. Hence, denoting by $\sum_{F}$ the sum over all $F \subseteq
H$ with $v_F \ge 3$, the number of pairs $G, G'$ which contribute to
$\Delta$ is at most $\sum_{F} O(n^{2v_H-v_F-2})$. For every pair $G, G'$
as above, if $(G \cup \{g\}) \cap (G' \cup \{g\})$ is isomorphic to $F$
then $\expec{X_G \cap X_{G'}} = \rho^{2e_H -e_F -1}$. Hence
\begin{eqnarray}
\nonumber
\Delta \le \sum_{F} O(n^{2v_H -v_F -2} \rho^{2e_H -e_F -1})
&=& \sum_{F} O(n^{2v_H -4 -v_F + 2} \rho^{2e_H -2 -e_F + 1})  \\
\label{eq::laso}
&=& k^{2(e_H-1)} \sum_{F} O(n^{-v_F + 2} \rho^{-e_F + 1}). 
\end{eqnarray}
Now if $F \subsetneq H$ and $v_F \ge 3$ then by the fact that 
$H$ is strictly $2$-balanced we have $n^{-v_F + 2} \rho^{-e_F + 1}
\le n^{-\epsilon'_H+o(1)}$ for some $\epsilon'_H>0$ that depends
only on $H$ (see the proof of Fact~\ref{fact::ufact}).  If $F$ on
the other hand satisfies $F = H$, then $n^{-v_F + 2} \rho^{-e_F + 1}
= k^{-(e_H-1)}$. Hence, we can further upper bound~(\ref{eq::laso})
by $O(k^{e_H-1})$. This upper bound on $\Delta$
can be used with~(\ref{eq::janson}) to show that
\begin{eqnarray*}
\prob{X \ge \expec{X} - k^{e_H/2-1/3}/2} \ge 1 - \exp\Big(-\Omega\big(k^{1/3}\big)\Big).
\end{eqnarray*}
This gives us~(\ref{eq::treea}).

\subsubsection{The upper tail}
\label{sec::uppertail}
We are interested in giving a lower bound on the probability of the event
that $|\Lambda(g,\rho)| \le \lambda k^{e_H-1} + k^{e_H/2-1/3}/2$.  The
technique we use is due to Spencer~\cite{DBLP:journals/jct/Spencer90a}.
Let $\G$ be the graph over the vertex set $\Lambda(g,\rho)$ and
whose edge set consists of all pairs of distinct vertices $G,G' \in
\Lambda(g,\rho)$ such that $G \cap G' \ne \emptyset$.  Let $W_1$ be the
size of the maximum independent set in $\G$. Let $W_2$ be the size
of the maximum induced matching in $\G$.  Let $W_3$ be the maximum
degree of $\G$. Then by a simple argument, one gets that the number
of vertices in $\G$, which is $|\Lambda(g,\rho)|$, is at most $W_1 +
2W_2W_3$. (Indeed, we can partition the set of vertices of $\G$
to those that are adjacent to a vertex in some fixed induced matching
of largest size, and to those that are not. The first part of the
partition trivially has size at most $2W_2W_3$. The second part of the
partition is an independent set and so has size at most $W_1$.) Hence,
in order to prove~(\ref{eq::treeb}), it is enough to show that $W_1$
and $W_2W_3$ are sufficiently small with probability $1-n^{-\omega(1)}$.
Specifically we will show the following:
\begin{eqnarray}
\label{eq::i1}
\prob{W_1 \ge \lambda k^{e_H-1} + k^{e_H/2-1/3}/3} &\le& n^{-\omega(1)}, \\
\label{eq::i2}
\prob{W_2 \ge \ln n} &\le& n^{-\omega(1)}, \\
\label{eq::i3}
\prob{W_3 \ge \ln n} &\le& n^{-\omega(1)}.
\end{eqnarray}
Note that by the argument above,~(\ref{eq::i1}--\ref{eq::i3})
imply via the union bound that with probability $1-n^{-\omega(1)}$,
$|\Lambda(g,\rho)| \le \lambda k^{e_H-1} + k^{e_H/2-1/3}/2$, so it remains
to prove~(\ref{eq::i1}--\ref{eq::i3}).

We start by proving~(\ref{eq::i3}). Since there are at most $n^{v_H-2}$
subgraphs in $\Lambda(g,1)$, it is enough to fix $G \in \Lambda(g,1)$
and prove that, with probability $1-n^{-\omega(1)}$, either $G$ is not
a vertex in $\G$, or $G$ has degree less than $\ln n$ in $\G$.
So let us fix $G \in \Lambda(g,1)$.  For $t \ge 0$, we say that a
sequence $S=(G_j)_{j=0}^{t}$ of subgraphs $G_j \in \Lambda(g,1)$ is
a \emph{$(G,t)$-star}, if $G_0 = G$ and if for every $j \ge 1$ the
following two conditions hold: (i)~$G_0 \cap G_j \ne \emptyset$, and
(ii)~$G_j$ has an edge which do not belong to any $G_{j'}$, $j'<j$. We
say that $\GG(n,\rho)$ contains a $(G,t)$-star $S$ and write $\event{S
\subseteq \GG(n,\rho)}$ for that event, if for every subgraph $G_j \in S$,
$G_j \subseteq \GG(n,\rho)$.  We first observe that if no $(G,t)$-star
is contained in $\GG(n,\rho)$, then either $G$ is not a vertex of $\G$,
or the degree of $G$ in $\G$ is at most $O(t^{e_H})$.  Indeed, if $t=0$
then clearly $G$ is not a vertex in $\G$; So assume $t \ge 1$ and and
let $S$ be a maximal $(G,t')$-star that is contained in $\GG(n,\rho)$
(here maximal means that $\GG(n,\rho)$ contains no $(G,t'+1)$-star). Then
by maximality of $S$, any vertex that is adjacent to $G$ in $\G$ is
either in the sequence $S$, or is fully contained in $E(S)$, where $E(S)$
denotes the set of all edges of the subgraphs in $S$.  Since $|E(S)| =
O(t)$, it then follows trivially that the number of vertices
adjacent to $G$ in $\G$ is at most $O(t^{e_H})$.  Hence, in order
to prove~(\ref{eq::i3}) it remains to show that with probability
$1-n^{-\omega(1)}$, $\GG(n,\rho)$ contains no $(G,\floor{\ln\ln
n})$-star, say. For brevity, below we assume that $\ln\ln n$ is
an integer.

\def\Star{\textrm{Star}}

Let $Z_t$ denote the number of $(G,t)$-stars that are contained in
$\GG(n,\rho)$, where $G$ is the subgraph fixed above.  Since the
probability that $\GG(n,\rho)$ contains a $(G,t)$-star is at
most $\expec{Z_t}$, it is enough to show that for $t = \ln\ln
n$, $\expec{Z_t}$ is upper bounded by $n^{-\omega(1)}$. Denote by
$\Star_t$ the set of all $(G,t)$-stars.  For $S = (G_j)_{j=0}^{t-1} \in
\Star_{t-1}$, denote by $\E_t(S)$ the set of all $G_t \in \Lambda(g,1)$
such that $(S,G_t) := (G_j)_{j=0}^t \in \Star_t$. Then for $t \ge 1$,
\begin{eqnarray*}
\expec{Z_t} &=& \sum_{S \in \Star_t} \prob{S \subseteq \GG(n,\rho)}  \\
            &=& 
              \sum_{S \in \Star_{t-1}} \prob{S \subseteq \GG(n,\rho)} \cdot
	      \sum_{G_t \in \E_t(S)} 
	      \probcnd{G_t \subseteq \GG(n,\rho)}{S \subseteq \GG(n,\rho)}.
\end{eqnarray*}
Take $t \in [\ln\ln n]$ and fix $S \in \Star_{t-1}$.  Note that
the number of subgraphs $G_t \in \E_t(S)$ such that $(G_t \cup
\{g\}) \cap (E(S) \cup \{g\})$ is isomorphic to $F \subseteq H$
is at most $O(n^{v_H-v_F}t^{v_H-2})$, which for our choice of $t$
is at most $n^{v_H-v_F+o(1)}$. Moreover, for such subgraphs $G_t$,
we have that the probability of $\eventcnd{G_t \subseteq \GG(n,\rho)}{S
\subseteq \GG(n,\rho)}$ is exactly $\rho^{e_H-e_F}$.  Also note that
for every $G_t \in \E_t(S)$, $(G_t \cup \{g\}) \cap (E(S) \cup \{g\})$
is isomorphic to some $F \subsetneq H$ with $v_F \ge 3$.  Hence,
letting $\sum_F$ be the sum over all $F \subsetneq H$ with $v_F \ge 3$,
we have for our choice of $t$ that there exists $\epsilon_H > 0$ such that:
\begin{displaymath}
\sum_{G_t \in \E_t(S)} 
\probcnd{G_t \subseteq \GG(n,\rho)}{S \subseteq \GG(n,\rho)} \le
\sum_{F} O(n^{v_H - v_F + o(1)} \rho^{e_H - e_F}) \le n^{-\epsilon_H+o(1)},
\end{displaymath}
where the last inequality is from Fact~\ref{fact::ufact}. Hence for $t
\in [\ln\ln n]$,
\begin{eqnarray}
\label{eq::lili}
\expec{Z_t} \le \expec{Z_{t-1}} \cdot n^{-\epsilon_H+o(1)}.
\end{eqnarray}
As there is only one $(G,0)$-star, $\expec{Z_0} \le 1$. Hence we conclude
from~(\ref{eq::lili}) that $\expec{Z_t} \le n^{-(\epsilon_H-o(1))t}$
for all $t \in [\ln\ln n]$. Thus, for $t = \ln\ln n$, $\expec{Z_t}
= n^{-\omega(1)}$. This concludes the proof of~(\ref{eq::i3}).

\def\Match{\textrm{Match}}

Next we prove~(\ref{eq::i2}).  Let $Y_t$ denote the number of induced matchings
of size $t$ in $\G$. Since the expectation of $Y_t$ is an upper bound
on the probability that there exists an induced matching of size $t$
in $\G$, in order to prove~(\ref{eq::i2}) it is enough to show that
for $t = \floor{\ln n}$, $\expec{Y_t} = n^{-\omega(1)}$.
Let $\G^*$ be the graph whose vertex set is $\Lambda(g,1)$ and whose
edge set consists of all pairs of distinct vertices $G,G'$ such that
$G \cap G'\ne \emptyset$.  Let $\Match_t$ be the collection of all
induced matchings of size $t$ in $\G^*$.  For $M \in \Match_{t-1}$,
let $\E_t(M)$ denote the set of all edges $(G_t,G_t')$ in $\G^*$
such that $M \cup \{(G_t,G_t')\} \in \Match_t$.  The number of edges
$(G_t,G_t') \in \E_t(M)$ such that $(G_t \cup \{g\}) \cap (G_t' \cup
\{g\})$ is isomorphic to $F \subseteq H$ is at most $O(n^{2v_H - 2 -
v_F})$;  Moreover, for such an edge $(G_t,G_t')$, the probability of
the event $\event{G_t, G_t' \subseteq \GG(n,\rho)}$ is $\rho^{2e_H - 1 -
e_F}$, even conditioning on the event $\event{G,G' \subseteq \GG(n,\rho):
(G,G') \in M}$.  Trivially, for an edge $(G_t,G_t') \in \E_t(M)$, we have
that $(G_t \cup \{g\}) \cap (G_t' \cup \{g\})$ is isomorphic to a proper
subgraph $F$ of $H$ over at least $3$ vertices.  Thus, if $\sum_F$ is
the sum over all $F \subsetneq H$ with $v_F \ge 3$, we have for $t \ge 1$,
\begin{eqnarray}
\label{eq:dis}
\nonumber
\expec{Y_t} &=& \sum_{M \in \Match_t} \Prob{G,G' \subseteq \GG(n,\rho) : (G,G') \in M} \\
\nonumber
&\le&\sum_{M \in \Match_{t-1}} \Prob{G,G' \subseteq \GG(n,\rho) : (G,G') \in M} \cdot \\
\nonumber
&& \sum_{(G_t,G_t') \in \E_t(M)} \Probcnd{G_t,G_t' \subseteq \GG(n,\rho)}
                                         {G,G' \subseteq \GG(n,\rho) : (G,G') \in M} \\
&\le& \expec{Y_{t-1}} \cdot \sum_{F} O(n^{2v_H-2-v_F} \rho^{2e_H-1-e_F})
\,\,\,\le\,\,\, \expec{Y_{t-1}} \cdot n^{-\epsilon_H+o(1)},
\end{eqnarray}
where the last inequality follows from the fact that
$n^{v_H-2}\rho^{e_H-1} = k^{e_H-1} = n^{o(1)}$ and from
Fact~\ref{fact::ufact}, so $\epsilon_H > 0$ depends only on $H$.
Since trivially $\expec{Y_0} = 1$, from~(\ref{eq:dis}) we can conclude
that $\expec{Y_t} = n^{-\omega(1)}$, for $t = \floor{\ln n}$.
This gives us~(\ref{eq::i2}).

Lastly, we prove~(\ref{eq::i1}).  For this we use the next tail
bound due to Spencer~\cite{DBLP:journals/jct/Spencer90a}~(See
also~\cite{janson00random}, Lemma~2.46).  If $X$ denotes the number of
vertices in $\G$ then
\begin{eqnarray}
\label{eq::jans}
\prob{W_1 \ge \expec{X} + t} \le \exp\bigg(-\frac{t^2}{2(\expec{X}+t/3)}\bigg).
\end{eqnarray}
Using the fact that $\expec{X} = \lambda k^{e_H-1}$, taking $t =
k^{e_H/2-1/3}/3$, using the fact that $k^{1/3} = \omega(\ln n)$, we can
conclude from~(\ref{eq::jans}) the validity of~(\ref{eq::i1}).

\subsection{Bounding $\prob{E_2(f)}$}
\label{sec::32}
For the rest of this section we fix an edge $f \in K_n$.  We show
that $E_2(f)$ occurs with probability $1-o(1)$. 
\begin{definition}
[bad sequence]
\label{def::bad}
Let $S = (G_1,G_2,\ldots,G_d)$ be a sequence of subgraphs of $K_n$
with $2 \le d \le 2D$.  We say that $S$ is a \emph{bad
sequence} if the following three items hold simultaneously:
\begin{enumerate}
\item For all $j \in [d]$, $G_j \in \Lambda(g, 1)$ for some edge
$g \in \{f\} \cup \bigcup_{i<j} G_i$.
\item For all $j \in [d-1]$, $G_j$ shares exactly $2$ vertices and $0$ edges
with $\{f\} \cup \bigcup_{i<j} G_i$.
\item $G_d$ shares at least $3$ vertices and at most $e_H-2$ edges
with $\{f\} \cup \bigcup_{i<d} G_i$.
\end{enumerate}
\end{definition}

For a bad sequence $S = (G_1,G_2,\ldots,G_d)$, write $\event{S \subseteq
\GG(n,\rho)}$ for the event that for every $j \in [d]$, $\event{G_j
\subseteq \GG(n,\rho)}$ occurs.  Let $E_3$ be the event that for all
bad sequences $S$, $\event{S \subseteq \GG(n,\rho)}$ does not occur. The
next two propositions imply the required lower bound of $1-o(1)$ on the
probability of $E_2(f)$, by first showing that $E_3$ implies $E_2(f)$
and then showing that the probability of $E_3$ is $1-o(1)$.
\begin{proposition}
\label{prop:lkj}
$E_3$ implies $E_2(f)$.
\end{proposition}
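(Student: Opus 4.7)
The plan is to prove the contrapositive: if $T_{f,D}$ fails to be good, then $\GG(n,\rho)$ contains a bad sequence. I begin by noting that property~P3 is essentially automatic under $E_3$: the count of $G\in\Lambda(g,\rho)$ excluded from $\Gamma_T(v)$ (those containing the grandparent's edge $g'$) is bounded, because if this count exceeded some constant depending only on $H$, one could extract a short bad sequence consisting of the tree-ancestor chain leading to $g$ together with one such excluded $G$ (which shares the edge $g'$ and at least $3$ vertices with the chain), contradicting $E_3$. Hence under $E_3$, P3 holds, and the remaining task is to show that failure of~P1 or~P2 also yields a bad sequence.

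Order the odd-distance nodes of $T_{f,D}$ in BFS order (level by level, arbitrary within each level), and let $u_m$ be the first whose label $G := G_{u_m}$ has \emph{bad intersection} with $E_{m-1} := \{f\} \cup \bigcup_{u < u_m} G_u$, meaning $|V(G) \cap V(E_{m-1})| \ge 3$ or $|E(G) \cap E(E_{m-1})| \ge 1$. Failure of P1 witnesses some $u$ with $f \in G_u$, and failure of P2 witnesses some earlier $u' < u$ with $G_u \cap G_{u'} \ne \emptyset$; in either case such a $u_m$ exists. Moreover, any shared edge forces $\ge 3$ shared vertices, because $G$ already shares the two endpoints of its parent edge $g_{k-1}$ (where $u_m$ sits at level $2k-1$) and any further shared edge $\ne g_{k-1}$ must contribute a third vertex (as $g_{k-1}$ is the unique edge between its own endpoints and is excluded from $G$). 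By minimality of $u_m$, every earlier $u < u_m$ has ``good intersection'' with its own previous set: exactly $2$ shared vertices (the parent-edge endpoints) and $0$ shared edges.

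To construct the bad sequence ending with $G$, let $G_m^{(1)}, \ldots, G_m^{(k-1)}$ be the odd-distance ancestor labels of $u_m$. If the ancestor union alone accounts for $\ge 3$ shared vertices with $G$, use the path sequence $(G_m^{(1)}, \ldots, G_m^{(k-1)}, G)$ of length $k \le D \le 2D$. Otherwise a non-ancestor $G_{u_1}$ with $u_1 < u_m$ contributes a third shared vertex; append $u_1$'s odd-ancestor chain (of length $l$) along its common prefix of length $p$ with $u_m$'s chain, producing a sequence of length $k + l - p \le 2D$ ordered so that each label's odd-parent precedes it. Conditions~(1) and~(2) of a bad sequence then follow from the tree construction (each attached $G_j$ sits in $\Lambda$ of an edge that lies in the preceding labels) and from the minimality of $u_m$ applied to the selected subset; the $\ge 3$ shared-vertex half of~(3) holds by construction.

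The main obstacle is the edge half of~(3), namely $|E(G) \cap E(\text{sequence-previous})| \le e_H - 2$. The only obstruction is the \emph{closure} case in which all $e_H - 1$ edges of $G$ already lie in $E_{m-1}$, so that $G \cup \{g_{k-1}\}$ is a full copy of $H$ buried in the edge-disjoint tree-of-$H$-copies formed by the BFS-before $G_u$'s. Using that $H$ is regular and strictly $2$-balanced, hence has minimum degree $\ge 2$ (so removing any single edge from $H$ leaves no isolated vertex), I would argue that the BFS-latest $G_{u_*}$ meeting this rogue $H$-copy in an edge $a$ must either have $|A| := |E(G_{u_*}) \cap E(H^*)| = 1$, in which case goodness of $G_{u_*}$ forces the two vertices of $V(a) \subseteq V(E_{u_*-1})$ to coincide with the endpoints of $G_{u_*}$'s parent edge, giving $a = g_{u_*}$ and contradicting $g_{u_*} \notin G_{u_*}$; or $|A| \ge 2$, in which case the $\ge 3$ vertices of $V(A) \subseteq V(E_{u_* - 1})$ contradict goodness of $G_{u_*}$. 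Either alternative contradicts the ``goodness'' of $u_* < u_m$ guaranteed by minimality. So the closure case cannot arise, the edge bound $\le e_H - 2$ holds, and the constructed sequence is a genuine bad sequence contained in $\GG(n,\rho)$, completing the contrapositive.
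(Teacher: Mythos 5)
Your global strategy — order the odd-height nodes by BFS, take the first ``bad intersection'' $u_m$, and manufacture a bad sequence from ancestor chains — is a legitimate alternative to the paper's two nested inductions (Claims~3.3 and~3.5), and the preparatory steps (the $\ge 3$-vertex half of item~(3), items~(1)--(2) for the sequence prefix, the reduction of~P3 to the closure question) are essentially right. The gap is in your closure argument, and it is a real one.

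You assert that in the closure case, taking $u_*$ to be the BFS-latest node before $u_m$ whose label meets $E(H^*)$ with $A := E(G_{u_*}) \cap E(H^*)$, we have $V(A) \subseteq V(E_{u_*-1})$ whenever $|A| \ge 2$. That inclusion requires every vertex of $V(A)$ to be incident to at least one edge of $H^*$ lying outside $A$; vertices whose \emph{entire} $H^*$-neighbourhood falls inside $A$ (``interior'' vertices of $A$) are not covered by this reasoning. Since $H$ is $\delta$-regular, such a vertex exists as soon as $|A| \ge \delta$. For instance with $H = C_r$ (so $\delta = 2$, a perfectly admissible regular strictly $2$-balanced graph): if $A$ consists of two consecutive edges of the cycle $H^*$, the middle vertex is interior, and you only get $2$ guaranteed shared vertices for $G_{u_*}$ — consistent with goodness, so no contradiction. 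More extremely, $|A| = e_H - 1$ (i.e., $G_{u_*} \subseteq H^*$) leaves all but two vertices of $H^*$ interior. Your argument nowhere excludes these regimes, so the closure case is not actually ruled out. Note also that min-degree $\ge 2$ only gives you the $|A| = 1$ subcase; it does not help once $|A| \ge \delta$.

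The paper avoids this by never trying to contradict closure against the full BFS-prior union at once. Instead (Claim~3.3) it shows that along a single tree path the closure case forces $g_{d-2} \in G_d$ — impossible because Definition~\ref{def::afdtree} explicitly excludes such $G_d$; the engine here is Fact~\ref{fact:f1} together with regularity, which lets one propagate edges of $G_d^+$ into $G_{d-1}$ and conclude $G_{d-1} \subseteq G_d^+$, hence $g_{d-2} \in G_d$. Then (Claim~3.5) cross-branch closure forces $G_d = G_l'$ as graphs, which collapses two distinct tree nodes to the same label and contradicts the inductively established P1/P2. Your argument would need some analogous use of the $g_{d-2} \notin G_d$ exclusion rule or of label collisions; as written, the crucial ``closure cannot arise'' step is unproved, and since P3 also leans on that step, the proposal does not yet establish the proposition.
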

\begin{proof}
Assume $E_3$ occurs. Then for every bad sequence $S$, $\event{S
\subseteq \GG(n,\rho)}$ does not occur.  To prove the assertion in
the proposition, we need to show that the tree $T_{f,D}$ defined in
Definition~\ref{def::afdtree} is a good tree.  To do this, we need to
show that $T_{f,D}$ satisfies properties P1, P2 and P3, as given in
Definition~\ref{def::goodtree}.  We start the proof by showing, using
the following claim, the $T_{f,D}$ satisfies property~P1 (and part
of property~P2).

\begin{claim}
\label{claim::clm0}
For $d \in [D]$,
let $P=(v_0,u_1,v_1,\ldots,u_{d-1},v_{d-1},u_d,v_d)$ be a directed
path in $T_{f,d}$ from the root $v_0$ to a leaf $v_d$.  Let $G_j$
be the label of node $u_j$ and let $g_j$ be the label of node $v_j$
(so that $g_0 = f$).  Then (i) $G_d \cap \{f\} = \emptyset$, and (ii)
$G_d \cap G_i = \emptyset$, for every $0 \le i \le d-1$.
\end{claim}
\begin{proof}
The proof is by induction on $d$. Clearly, the claim is valid for $d=1$,
as by definition, any subgraph in $\Lambda(f,\rho)$ does not contain
the edge $f$. Let $d \ge 2$, $d \in [D]$ and assume the claim holds
for $d-1$.  We prove the claim for $d$.  Let $S = (G_1,G_2,\ldots,G_d)$
be the sequence of the labels of the nodes $u_i$, $i \in [d]$, along the
path $P$.  Assume for the sake of contradiction that $G_d$ shares some edge
with $\{f\} \cup \bigcup_{i<d} G_i$.  We shall reach a contradiction by 
showing that either $S$ is a bad sequence (this contradicts the occurrence
of $E_3$), or $P$ is not a directed path in $T_{f,d}$.  

Note first that from the induction hypothesis we have that
for every $j \in [d-1]$, $G_j$ shares no edge with 
$\{f\} \cup \bigcup_{i<j} G_i$. We claim that this implies also that
for every $j \in [d-1]$, $G_j$ shares exactly $2$ vertices with
$\{f\} \cup \bigcup_{i<j} G_i$. Indeed, for $d=2$ this claim 
is true by definition. If the claim is not true for $d \ge 3$ then
we have for some $j \in [d-1]$, $j \ge 2$, that $(G_1, G_2, \ldots, G_j)$ 
is a bad sequence, contradicting $E_3$.

Now, by assumption, $G_d$ shares some edge with
$\{f\} \cup \bigcup_{i<d} G_i$. If we also have that
$G_d$ shares at most $e_H-2$ edges with 
$\{f\} \cup \bigcup_{i<d} G_i$ then by the observation 
made in the previous paragraph we are done, since this 
implies that $S$ is a bad sequence.  Therefore, we can 
assume for the rest of the proof that $G_d$ shares all 
of its $e_H-1$ edges with $\{f\} \cup \bigcup_{i<d} G_i$.
We shall reach a contradiction by showing that $P$ is not
a directed path in $T_{f,d}$.

Write $g_{d-1} = \{a,b\}$ and $g_{d-2} = \{x,y\}$ and recall that
$G_{d-1} \in \Lambda(g_{d-2},\rho)$ and $G_d \in \Lambda(g_{d-1},\rho)$.
Observe that $g_{d-2} \ne g_{d-1}$. Hence, we may assume without loss
of generality that $a \notin \{x,y\}$. Note that $a$ is a vertex of
both $G_d$ and $G_{d-1}$.  Now, a \emph{key observation} is that any edge
in $G_d$ that is adjacent to $a$ must belong also to $G_{d-1}$,
for otherwise, the subgraph $G_{d-1}$ will share $3$ vertices
($x, y$ and $a$) with $\{f\} \cup \bigcup_{i<d-1} G_i$--and that 
contradicts the fact established above.
More generally and for the same reason, if $a' \notin \{x, y\}$
is a vertex of both $G_d$ and $G_{d-1}$, then any edge adjacent to $a'$
in $G_d$ must also belong to $G_{d-1}$.  With that key observation at hand,
we conclude the proof by reaching a contradiction for every possible
choice for the graph $H$.

Suppose first that $H = K_3$. Without loss of generality, we have $b =
x$. Now, since any edge that is adjacent to $a$ in $G_d$ must also be
an edge in $G_{d-1}$, it follows that $\{a, y\}$ is an edge in $G_d$.
Therefore, $\{a, b=x, y\}$ is the set of vertices of $G_d$ and so $g_{d-2}
= \{x, y\}$ is an edge in $G_d$. But, by Definition~\ref{def::afdtree},
this contradicts the assumption that $P$ is a directed path in $T_{f,d}$.

To reach a contradiction for other regular, strictly $2$-balanced graphs,
we need the following fact.
\begin{fact}
\label{fact:f1}
Let $H$ be a regular, strictly $2$-balanced graph with $v_H \ge 4$.  Let $\{x,
y\}$ be an edge in $H$ and let $a, a' \notin \{x, y\}$ be two distinct
vertices in $H$. Then there is a path from $a$ to $a'$ in $H$ that avoids
the vertices $\{x, y\}$.
\end{fact}
\begin{proof}
Assume for the sake of contradiction that there exist two distinct
vertices $a, a' \notin \{x, y\}$ such that every path in $H$ from
$a$ to $a'$, if there exists any, must go through a vertex in $\{x,
y\}$. This implies that we can write $H$ as the union of two graphs,
$H_1$ and $H_2$, that share only the edge $\{x, y\}$ and the vertices
$x$ and $y$, and such that $a$ is a vertex in $H_1$ and $a'$ is a 
vertex in $H_2$. Note that $v_{H_1}, v_{H_2} \ge 3$ and that $H_1$ 
and $H_2$ are proper subgraphs of $H$.  Without loss of generality,
we assume that $(e_{H_1}-1)/(v_{H_1}-2) \ge (e_{H_2}-1)/(v_{H_2}-2)$.
Now, by the fact that $H = H_1 \cup H_2$ is strictly $2$-balanced, we
have $(e_H-1)/(v_H-2) > (e_{H_1}-1)/(v_{H_1}-2)$.  Therefore, from the
last two inequalities we deduce that
\begin{eqnarray*}
(e_{H_1}-1)(v_H-2) &<& (v_{H_1}-2)(e_H-1),\,\,\,\,\,\,\, \textrm{and} \\
-(e_{H_1}-1)(v_{H_2}-2) &\le& -(v_{H_1}-2)(e_{H_2}-1).
\end{eqnarray*}
Hence, 
\begin{eqnarray*}
(e_{H_1}-1)(v_H-v_{H_2}) < (v_{H_1}-2)(e_H - e_{H_2}).
\end{eqnarray*}
Applying the facts that $v_H = v_{H_1} + v_{H_2} - 2$ and $e_H = e_{H_1}
+ e_{H_2} - 1$ to the last inequality, we get $(e_{H_1}-1)(v_{H_1} - 2) <
(v_{H_1}-2)(e_{H_1} - 1)$, which is a clear contradiction.
\end{proof}

Suppose now that $H$ is regular, strictly $2$-balanced and $v_H \ge 4$.
We use Fact~\ref{fact:f1} in order to generalize the argument for $K_3$
given above.  Define $G_d^+ := G_d \cup \{g_{d-1}\}$. We will show
below that $G_{d-1} \subseteq G_d^+$. Since $H$ is regular, this implies
that $g_{d-2} \in G_d^+$. Since $G_d^+ = G_d \cup \{g_{d-1}\}$ and $g_{d-1}
\ne g_{d-2}$ it then
follows that $g_{d-2} \in G_d$. This, by Definition~\ref{def::afdtree},
contradicts the assumption that $P$ is a directed path in $T_{f,d}$. It
thus remains to show $G_{d-1} \subseteq G_d^+$.
Since every edge in $G_{d-1}$ is adjacent to some vertex $a' \notin \{x,
y\}$, it is enough to show that for every vertex $a' \notin \{x, y\}$ of
$G_{d-1}$, any edge adjacent to $a'$ in $G_{d-1}$ is an edge in $G_d^+$.

Let $a' \notin \{x, y\}$ be an arbitrary vertex of $G_{d-1}$.
By Fact~\ref{fact:f1} there exists in $G_{d-1}$ a path $(a_0=a, a_1,
a_2, \ldots, a_l=a')$ of length $l \ge 0$ from $a$ to $a'$, that avoids
the vertices $\{x, y\}$.
We first claim that $a_i$ is a vertex of $G_d^+$ 
for every $0 \le i \le l$. Indeed, the claim is trivially true for $i=0$, as $a_0 = a$
is a vertex of $G_d \subset G_d^+$.  Assume that for
$0 \le i-1 < l$, $a_{i-1}$ is a vertex of $G_d^+$. 
Note that in that case, $a_{i-1}$ is a vertex of both $G_d$ and $G_{d-1}$ and
that $a_{i-1} \notin \{x, y\}$. Then by the key observation made above, every
edge that is adjacent to $a_{i-1}$ in $G_d$ is an edge in $G_{d-1}$. Also, 
we have $g_{d-1} \in G_{d-1}$.  Therefore, every edge that
is adjacent to $a_{i-1}$ in $G_d^+$ is an edge in $G_{d-1}$. 
This, together with the facts that $H$ is regular and $a_{i-1} \notin \{x,
y\}$ implies that every edge that is adjacent to $a_{i-1}$ in $G_{d-1}$ is 
an edge in $G_d^+$. Therefore, since $\{a_{i-1}, a_i\}$ is an edge in $G_{d-1}$,
we get as needed that $a_i$ is a vertex of $G_d^+$. 

Now that we have found that $a'$ is a vertex of both $G_d^+$ and
$G_{d-1}$, using the fact that $a' \notin \{x, y\}$, we again have by the
key observation above that every edge that is adjacent to $a'$ in $G_d$
is an edge in $G_{d-1}$. Since $g_{d-1} \in G_{d-1}$ we thus have that
every edge that is adjacent to $a'$ in $G_d^+$ is an edge in $G_{d-1}$.
Therefore, by regularity of $H$ and since $a'
\notin \{x, y\}$, we have that every edge that is adjacent to $a'$ in
$G_{d-1}$ is an edge in $G_d^+$.  This completes the proof of the claim.
\end{proof}

Claim~\ref{claim::clm0} gives us property P1 (and part of property~P2). We
now prove that property~P2 holds as well.
\begin{claim}
\label{claim:p2}
For any $d \in [D]$, property~P2 holds for $T_{f,d}$.
\end{claim}
\begin{proof}
The proof is by induction on $d$.  To see that property P2 holds for the
base case, $d=1$, let $(v_0,u_1)$ and $(v_0,u_1')$ be two different paths
in $T_{f,1}$ so that $u_1 \ne u_1'$. Let $G_1$ and $G_2$ be the labels
of the nodes $u_1$ and $u_1'$, and assume for the sake of contradiction
that $G_1 \cap G_2 \ne \emptyset$. Then clearly $G_2$ shares an edge with 
$\{f\} \cup G_1$.  Since $G_1$ and $G_2$ are
distinct subgraphs (or else $u_1 = u_1'$), and using the fact that
$G_2 \in \Lambda(f,\rho)$ and so $G_2 \cap \{f\} = \emptyset$, we
also have that $G_2$ shares at most $e_H-2$ edges with
$\{f\} \cup G_1$. Hence,
by definition, $(G_1,G_2)$ is a bad sequence. This contradicts $E_3$
and so property~P2 holds for $T_{f,1}$.

Let $d \in [D]$, $d \ge 2$ and assume the claim holds
for $d-1$.  We prove that~P2 holds for $T_{f,d}$.  Let $P =
(v_0,u_1,v_1,\ldots,v_{d-1},u_d,v_d)$ be a path in $T_{f,d}$ from
the root $v_0$ to a leaf. For $j \in [d]$, let $G_j$ be the label
of the node $u_j$.  Note that by the induction hypothesis and by
Claim~\ref{claim::clm0}, in order to prove the claim for $T_{f,d}$,
it suffices to show that $G_d$ does not share an edge with the label
of any node $u \notin \{u_1,u_2,\ldots,u_d\}$ in $T_{f,d}$, where $u$
is at odd distance from $v_0$.  Assume for the sake of contradiction
that for some $v_i$, $0 \le i \le d-1$, and some $l \ge 1$, there exists
a path $P' = (v_i,u_1',v_1',\ldots,v_{l-1}',u_l',v_l')$ in $T_{f,d}$
such that $u_1 \ne u_1'$ (and so $u_l' \notin \{u_1,u_2,\ldots,u_d\}$)
and yet the labels of $u_d$ and $u_l'$ have a non-empty intersection. We
will reach a contradiction by constructing a bad sequence from the labels
along the paths $P$ and $P'$.

For $j \in [l]$, let $G_j'$ be the label of node $u_j'$.  We note that
by assumption, $|G_d \cap G_l'| \ge 1$.  We assume for simplicity that
$l$ is minimal, in the sense that $|G_d \cap G_j'| = 0$ for every $1 \le 
j \le l-1$, or else we can shorten the path $P'$ so as to satisfy this
assumption and still construct a bad sequence and get a contradiction.
We consider two cases:
\begin{itemize}
\item Assume $v_l'$ is a node in $T_{f,d-1}$ so that $P'$ is a path in
$T_{f,d-1}$. Define $S$ to be the sequence which is the concatenation
of $(G_1, G_2, \ldots, G_{d-1})$ with $(G_1', G_2', \ldots, G_l')$.
For convenience, rewrite $S = (F_1,F_2,\ldots,F_{l+d-1})$ (here $F_1 = G_1$
and $F_{l+d-1} = G_l'$). From the induction hypothesis and 
claim~\ref{claim::clm0} we have that for every $j \in [l+d-1]$,
$F_j$ shares no edge with $\{f\} \cup \bigcup_{i<j} F_i$.
From this, together with the occurrence of $E_3$ we get that
for all $j \in [l+d-1]$, $F_j$ shares
exactly $2$ vertices with $\{f\} \cup \bigcup_{i<j} F_i$.

We now claim that the sequence $S' = (F_1, F_2, \ldots, F_{l+d-1}, G_d)$
is a bad sequence. First note that by the minimality of $l$, $G_d$ shares
 no edge with $\bigcup_{i < l} G_i'$. Also, by Claim~\ref{claim::clm0}, 
$G_d$ shares no edge with $\{f\} \cup \bigcup_{i < d} G_i$.  In contrast, 
by assumption, $G_d$ shares at least one edge with $G_l'$.  Note that $S'$ 
contains at most $2D$ subgraphs. Hence, to demonstrate that $S'$ is a bad 
sequence and get a contradiction it is enough to show that $G_d$ shares
at most $e_H-2$ edges with $G_l'$.

If $G_d$ shares $e_H-1$ edges with $G_l'$ then $G_d = G_l'$.  In that case,
 since $H$ is regular, we would have that the label of the node $v_{d-1}$ 
is the same as the label of the node $v_{l-1}'$. But since clearly $v_{d-1}
 \ne v_{l-1}'$ (indeed, the distance of $v_{d-1}$ from $v_0$ is strictly 
larger than the distance of $v_{l-1}'$ from $v_0$), this violates either 
property~P2 of $T_{f,d-1}$ which holds by the induction hypothesis, or
 property P1 which we have already established.  Hence $G_d$ shares at most
$e_H-2$ edges with $G_l'$ and we conclude that $S'$ is a bad sequence, a
contradiction to $E_3$.
\item
Assume $v_l'$ is a leaf in $T_{f,d}$.  Define the sequence 
$S = (F_1, F_2, \ldots, F_{l+d-1})$ as in the previous item. From the
induction hypothesis, Claim~\ref{claim::clm0} and from the previous item we 
have that for every $j \in [l+d-1]$, $F_j$ shares no edge with
$\{f\} \cup \bigcup_{i<j} F_i$. By the occurrence of $E_3$ we then have
that for every $j \in [l+d-1]$, $F_j$ shares exactly $2$ vertices with
$\{f\} \cup \bigcup_{i<j} F_i$. 

Define $S' = (F_1, F_2, \ldots, F_{l+d-1}, G_d)$ as in the previous item.
We again claim that $S'$ is a bad sequence. To verify this claim, first
note that by Claim~\ref{claim::clm0}, $G_d$ shares no no edge with $\{f\} 
\cup \bigcup_{i<d} G_i$. By the minimality of $l$, $G_d$ shares no edge 
with $\bigcup_{i<l} G_i'$. Also, there are at most $2D$ subgraphs in $S'$ 
and by assumption, $G_d$ shares some edge with $G_l'$. Hence, to conclude 
that $S'$ is a bad sequence, it is enough to show that $G_d$ shares at most 
$e_H-2$ edges with $G_l'$. We have two cases.
\begin{enumerate}
\item Suppose $v_{d-1} = v_{l-1}'$. Since $u_d \ne u_l'$ we have
by definition that $G_d$ shares at most $e_H-2$ edges with $G_l'$.
\item Suppose $v_{d-1} \ne v_{l-1}'$. If $G_d$ shares all $e_H-1$ 
edges with $G_l'$ then since $H$ is regular, we have that the labels
of $v_{d-1}$ and $v_{l-1}'$ are the same. This in turn implies that 
the parents of $v_{d-1}$ and $v_{l-1}$ are distinct and their labels
share an edge. This is a contradiction to the induction hypothesis. 
Hence, as needed, $G_d$ shares at most $e_H-2$ edges with $G_l'$.
\end{enumerate}
\end{itemize}
We conclude that property~P2 holds for $T_{f,d}$.
\end{proof}

We conclude the proof by arguing that property~P3 holds.  Let
$(v_0,u_1,v_1,\ldots,v_{d-1},u_d,v_d)$ be a path in $T_{f,D}$, starting
from the root $v_0$. Let $G_j$ be the label of node $u_j$ and let $g_j$
be the label of node $v_j$.  We need to show that the number of nodes
adjacent to $v_{d-1}$ in $T_{f,D}$ is $|\Lambda(g_{d-1},\rho)|-O(1)$.
The claim is trivially true for $d=1$, since any subgraph in $\Lambda(f,\rho)$
is a label of a node adjacent to the root of $T_{f,D}$. For $d \ge
2$, we recall that by definition, the nodes that are adjacent to
$v_{d-1}$ in $T_{f,D}$ are those nodes whose labels are in $\{G_d \in
\Lambda(g_{d-1},\rho) : g_{d-2} \notin G_d\}$.  Reflecting on the proof of
Claim~\ref{claim::clm0}, we see that assuming $E_3$, if $g_{d-2} \in G_d
\in \Lambda(g_{d-1},\rho)$, then the set of vertices of $G_d$ is the same as
that of $G_{d-1}$. This immediately implies that $|\{G_d \in \Lambda(g_{d-1},\rho) : g_{d-2} \in G_d\}|
= O(1)$.  This gives us property P3.  With that, we conclude the proof.
\end{proof}

\begin{proposition}
\label{eq:pe3}
$\prob{E_3} = 1-o(1)$.
\end{proposition}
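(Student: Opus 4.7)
The plan is to prove $\prob{E_3} = 1 - o(1)$ via the first moment method: letting $Z$ be the number of bad sequences $S$ with $S \subseteq \GG(n,\rho)$, I will show $\expec{Z} = o(1)$ and then apply Markov's inequality. Since the number of sequence lengths is bounded by $2D = O((\ln n)^{1/4})$ and the intersection pattern of $G_d$ with the earlier structure is parameterized by a finite collection of pairs $(s,t)$ with $s \ge 3$ and $t \le e_H-2$, it suffices to estimate, for each fixed $d$ and fixed intersection pattern, the expected number of such bad sequences.

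The key counting step is multiplicative. Fix $d \in [2,2D]$ and build the sequence incrementally. For each $j \in [d-1]$, condition 2 of Definition~\ref{def::bad} forces $G_j$ to share exactly $2$ vertices and $0$ edges with the current structure; hence the number of choices of the anchor edge $g \in \{f\} \cup \bigcup_{i<j} G_i$ is $O(D)$, the number of ways to extend to a copy of $H-g$ in $K_n$ is $O(n^{v_H-2})$, and the conditional probability that the $e_H-1$ new edges all lie in $\GG(n,\rho)$ is exactly $\rho^{e_H-1}$. Since $n^{v_H-2}\rho^{e_H-1} = k^{e_H-1} = n^{o(1)}$, the accumulated contribution from the first $d-1$ steps is $O(D)^{d-1} \cdot k^{(e_H-1)(d-1)}$, which is $n^{o(1)}$ because $D = O((\ln n)^{1/4})$ and $\ln k = (\ln n)^{1/2}$, so the exponent is $O((\ln n)^{3/4}) = o(\ln n)$.

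The gain comes from the final subgraph $G_d$. If $G_d$ shares $s \ge 3$ vertices and $t \le e_H-2$ edges with the previous structure, then setting $F' := (G_d \cap P) \cup \{g\}$, where $g$ is the anchor edge of $G_d$, $F'$ is a proper subgraph of $H$ with $v_{F'} = s \ge 3$ and $e_{F'} = t+1$. The number of choices for $G_d$ extending this intersection pattern is $O(D^s \cdot n^{v_H-s})$ (at most $O(D^s)$ ways to embed $F$ inside the previous structure, then $O(n^{v_H-s})$ ways to choose the remaining vertices), and the conditional probability that the remaining $e_H-1-t$ edges of $G_d$ lie in $\GG(n,\rho)$ is $\rho^{e_H-1-t}$. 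By Fact~\ref{fact::ufact} applied to $F' \subsetneq H$,
\begin{displaymath}
n^{v_H-s}\rho^{e_H-1-t} = n^{v_H - v_{F'}}\rho^{e_H - e_{F'}} \le n^{-\epsilon_H}.
\end{displaymath}

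Combining the two contributions and summing over the $O(D)$ choices of $d$ and the $O(1)$ choices of $(s,t)$ yields
\begin{displaymath}
\expec{Z} \le n^{o(1)} \cdot n^{-\epsilon_H} = o(1),
\end{displaymath}
and the proposition follows from $\prob{E_3^c} \le \expec{Z}$. The only real care needed is to verify that $D$ is small enough that the factor $k^{(e_H-1)(2D-1)}$ accumulated from the ``cheap'' steps of the sequence is subsumed by the single $n^{-\epsilon_H}$ savings at $G_d$; this is the reason for the choice $D = O((\ln n)^{1/4})$ in Definition~\ref{def::kk}, and it is the only nontrivial obstacle. Everything else is routine bookkeeping of isomorphism types and a direct invocation of Fact~\ref{fact::ufact}.
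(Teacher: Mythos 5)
Your proof is correct and takes essentially the same first-moment approach as the paper's proof of Proposition~\ref{eq:pe3}. The only organizational difference is that the paper splits bad sequences into $\Seq_{d,1}$ (where $G_d$ shares no edge with the preceding structure, handled by a direct count losing a factor $n^{-1}$) and $\Seq_{d,2}$ (where $G_d$ shares at least one edge, handled via $F$-types and Fact~\ref{fact::ufact}), whereas you unify both cases by letting $F'$ be the graph on all $s$ shared vertices (including any that are isolated in $F'$, which is necessary for $v_{F'}=s$ to hold when $t=0$); this is legitimate since adding isolated vertices to a proper subgraph only decreases $(e_F-1)/(v_F-2)$, so the strict $2$-balance inequality and hence Fact~\ref{fact::ufact} still apply.
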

\begin{proof}
Let $Z$ be the random variable counting the number of bad sequences $S$
for which $\event{S \subseteq \GG(n,\rho)}$ occurs.  Since the probability
that $\event{S \subseteq \GG(n,\rho)}$ occurs for some bad sequence $S$
is at most $\expec{Z}$, showing that $\expec{Z} = o(1)$ would imply
the proposition.

\def\Seq{\textrm{Seq}}
For $d \ge 2$, let $\Seq_d$ denote the collection of all bad sequences
of length $d$. Then
\begin{eqnarray}
\label{eq::sum2}
\expec{Z} &=& \sum_{2 \le d \le 2D} \sum_{S_d \in \Seq_d} \Prob{S_d \subseteq \GG(n,\rho)}.
\end{eqnarray}
Below we show that for every $d$ satisfying $2 \le d \le 2D$,
\begin{eqnarray}
\label{eq::pil}
\sum_{S_d \in \Seq_d} \Prob{S_d \subseteq \GG(n,\rho)} \le n^{-\epsilon_H+o(1)},
\end{eqnarray}
where $\epsilon_H>0$ is the constant provided in Fact~\ref{fact::ufact}.
From~(\ref{eq::sum2})~and~(\ref{eq::pil}) and since $2D = n^{o(1)}$, we get
that $\expec{Z} \le n^{-\epsilon_H+o(1)} = o(1)$ as required. Hence it remains to
prove~(\ref{eq::pil}).

Fix for the rest of the proof $d$ satisfying $2 \le d \le 2D$.  In order
to prove~(\ref{eq::pil}), it would be convenient to partition the set
$\Seq_d$ to two parts and then upper bound the sum in~(\ref{eq::pil}) for
each of the two parts of the partition. Let $\Seq_{d,1}$ be the set of
all sequences $S = (G_i)_{i=1}^d \in \Seq_d$ for which $G_d$ shares
no edge (and at least $3$ vertices) 
with $\{f\} \cup \bigcup_{i<d} G_i$. Let $\Seq_{d,2}$ be the set of
all sequences $S = (G_i)_{i=1}^d \in \Seq_d$ for which $G_d$ shares at least
$1$ edge (and at most $e_H-2$ edges) with $\{f\} \cup \bigcup_{i<d} G_i$.
%
%
It would be useful to further classify those members of $\Seq_{d,2}$
as follows.  Suppose that $S = (G_i)_{i=1}^d \in \Seq_{d,2}$.
Let $g$ be the unique edge in $\{f\} \cup \bigcup_{i < d} G_i$
such that $G_d \in \Lambda(g,1)$. 
(The edge $g$ is unique, since $H$ is regular.)  Define
\begin{displaymath}
H_{S} := \{g\} \cup \Big(
                    G_d \cap \big(
                    \{f\} \cup \bigcup_{i<d} G_i
                             \big)
                    \Big).
\end{displaymath}
Then we say that $S$ is an
\emph{$F$-type} if $H_{S}$ is isomorphic to $F$.  Note that if $S$
is an $F$-type then $F$ is a proper subgraph of $H$ with $v_F \ge 3$.
We are now ready to upper bound the sum in~(\ref{eq::pil}) for the two
parts of the partition of $\Seq_d$.

We start by giving an upper bound on the sum in~(\ref{eq::pil}) when the
sum ranges over all $S \in \Seq_{d,1}$.  First, we upper bound the size of 
$\Seq_{d,1}$.  To do so, we construct a bad sequence
$S = (G_i)_{i=1}^d \in \Seq_{d,1}$ iteratively.  Assume we have already chosen
the first $j-1$ subgraphs in $S$ for $j \in [d-1]$.
We count the number of choices for $G_j$: There are $O(d)$
possible choices for an edge $g \in \{f\} \cup \bigcup_{i<j} G_i$ for which
$G_j$ is in $\Lambda(g,1)$; There are at most $n^{v_H-2}$ choices
for the vertices of $G_j$; There are $O(1)$ choices for the edges of
$G_j$ given that we have already fixed its $v_H$ vertices. In total,
the number of choices for $G_j$ is at most $O(d \cdot n^{v_H-2}) =
n^{v_H-2 + O((\ln\ln n)/\ln n)}$.
Assume we have already chosen the first $d-1$
subgraphs in $S$. We count the number of choices for $G_d$: There
are $O(d)$ possible choices for an edge $g \in \{f\} \cup \bigcup_{i<d} G_i$
for which $G_d$ is in $\Lambda(g,1)$; There are at most $O(n^{v_H-3}
\cdot d^{v_H})$ choices for the vertices of $G_d$, which follows from
the fact that $G_d$ shares at least $3$ vertices with $\{f\} \cup \bigcup_{i<d}
G_i$; There are $O(1)$ choices for the edges of $G_d$, given that we have fixed
the $v_H$ vertices of $G_d$. In total, the number of choices for $G_d$
is at most $O(d \cdot n^{v_H-3} \cdot d^{v_H}) = n^{v_H-3 + o(1)}$.
From the above we conclude that the number of bad sequences $S \in \Seq_{d,1}$
is at most $n^{d(v_H-2)-1+o(1)}$.
Now, it is easy to see that if $S \in \Seq_{d,1}$
then the probability of $\event{S \subseteq \GG(n,\rho)}$ is
$\rho^{d(e_H-1)}$.  Thus, recalling Definition~\ref{def::kk}, we conclude that
\begin{eqnarray}
\nonumber
\sum_{S \in \Seq_{d,1}} \Prob{S \subseteq \GG(n,\rho)}
&\le& n^{d(v_H-2)-1+o(1)}\rho^{d(e_H-1)} \\
\nonumber
&\le& k^{d(e_H-1)}n^{-1+o(1)} \\
\label{eq::poi1}
& = & n^{-1+o(1)}.
\end{eqnarray}

We now upper bound the sum in~(\ref{eq::pil}) when the sum ranges over all
$S \in \Seq_{d,2}$. We first upper bound the number of $F$-type bad sequences
$S \in \Seq_{d,2}$, for some $F \subsetneq H$, $v_F \ge 3$.  As before, we 
construct such a bad sequence $S = (G_i)_{i=1}^{d}$ iteratively.  Assume we
have already chosen the first $j-1$ subgraphs in $S$, for $j \in [d-1]$. Then
the number of choices for $G_j$ is, as before, $n^{v_H-2+O((\ln\ln n)/\ln n)}$.  Assume 
we have already chosen the first $d-1$ subgraphs in $S$. We 
count the number of choices for $G_d$: There are $O(d)$ choices for an
edge $g \in \{f\} \cup \bigcup_{i<d} G_i$ such that $G_d \in \Lambda(g,1)$;
There are at most $O(n^{v_H-v_F} \cdot d^{v_F})$ choices for the vertices
of $G_d$; There are $O(1)$ choices for the edges of $G_d$, given that
we have fixed its edges. In total, the number of choices for $G_d$
is $O(d \cdot n^{v_H-v_F} \cdot d^{v_F}) = n^{v_H-v_F + o(1)}$.
Hence, we conclude that the number of $F$-type bad sequences $S \in
\Seq_{d,2}$ is $n^{(d-1)(v_H-2) + v_H-v_F+o(1)}$.  For such a sequence
$S$, it can be verified that the probability of $\event{S \subseteq
\GG(n,\rho)}$ is $\rho^{(d-1)(e_H-1)+e_H-e_F}$.  Hence, taking
$\sum_F$ to be the sum over all $F \subsetneq H$ with $v_F
\ge 3$ and using Fact~\ref{fact::ufact}, we have 
\begin{eqnarray}
\nonumber
\sum_{S \in \Seq_{d,2}} \Prob{S \subseteq \GG(n,\rho)}
&\le& \sum_{F} n^{(d-1)(v_H-2)+v_H-v_F+o(1)}\rho^{(d-1)(e_H-1)+e_H-e_F} \\
\nonumber
&\le& \sum_F k^{(d-1)(e_H-1)}n^{-\epsilon_H+o(1)}  \\
\label{eq::poi2}
&=& \sum_F n^{-\epsilon_H+o(1)}.
\end{eqnarray}
Since $\sum_F 1 = O(1)$, we conclude from~(\ref{eq::poi1}) 
and~(\ref{eq::poi2}) the validity of~(\ref{eq::pil}).  This completes 
the proof.
\end{proof}

\section{Proof of Lemma~\ref{lemma::toprove2}}
\label{sec::4}
In order to prove Lemma~\ref{lemma::toprove2}, let us fix an edge $f
\in K_n$ and assume everywhere throughout the section that $E_2(f)
\cap E_1$ occurs.  Hence, we may fix once and for the rest of this
section the good tree $T=T_{f,D}$ which is guaranteed to exist by the
occurrence of $E_2(f)$. It now suffices to lower bound the probability
of $\eventcnd{f \in \MM_n(H)}{\beta(f) < cn^{-(v_H-2)/(e_H-1)}}$.
It is extremely important to note, and we use this fact implicitly
throughout this section, that for every edge $g$ that appears as a label
of a non-root node at even height in $T$, the probability of the event
$\event{\beta(g) < \rho'}$, for $\rho' \le \rho$, is $\rho'/\rho$. This
is true since for such an edge $g$ we already condition on the event
$\event{g \in \GG(n,\rho)}$ and so $\beta(g)$ is uniformly distributed
in $[0,\rho]$. Therefore, for example, if $g$ is a label of a non-root
node at even height in $T$, then the probability of $\event{\beta(g) <
cn^{-(v_H-2)/(e_H-1)}}$ is $c/k$.

A useful convention we use throughout this section is this: Every two
distinct nodes in the good tree $T$ have distinct labels and so, for
the rest of this section we will refer to the nodes of $T$ \emph{by
their labels}.  We also recall and introduce some useful notation.
First recall that for every node $u$ in $T$, $\Gamma_{T}(u)$ denotes the
set of nodes adjacent to $u$ in $T$. For simplicity, we shall replace
$\Gamma_T(u)$ with $\Gamma(u)$ when no confusion arises.  Recall also
that the tree $T$ has height $2D$.  If $g$ is a node at height $2d$ in
$T$, we denote by $T_{g,d}$ the subtree of $T$ that is rooted at $g$.
Lastly, if $B \subseteq [0,1]^n$ is any event, we denote by $\ol{B}$
the event $[0,1]^n \setminus B$.

The overall structure of the proof of Lemma~\ref{lemma::toprove2} is this:
First, in the next few paragraphs, conditioning on $\event{\beta(f)
< cn^{-(v_H-2)/(e_H-1)}}$, we reduce the problem of lower bounding
the probability of $\event{f \in \MM_n(H)}$ to the problem of lower
bounding the probability of a certain event, $\ol{B(T_{f,D})}$ (to be
defined shortly), which we define using the tree $T_{f,D}$. We are then
left with the task of estimating the probability of $\ol{B(T_{f,D})}$,
a task being performed in Sections~\ref{sec::4a}~and~\ref{sec::4b}.

\begin{definition}
\label{def::block}
Let $g$ be a node in $T$ at height $2d$ and let $T_{g,d}$ be the subtree
of $T$ rooted at $g$.  Define
\begin{eqnarray*}
B(T_{g,d}) := \left\{ \begin{array}{ll}
\emptyset & \textrm{if $d = 0$,} \\
\exists G' \in \Gamma(g).\,\,\,
 \forall g' \in G'.\,\,\,  
 \event{\beta(g')<\beta(g)} \cap \ol{B(T_{g',d-1})} 
 & 
         \textrm{if $1 \le d \le D$.}
\end{array} \right.
\end{eqnarray*}
\end{definition}

\begin{proposition}
\label{prop::pqp}
Let $g$ be a non-leaf node in $T$ at height $2d$ and let $T_{g,d}$ be
the subtree of $T$ rooted at $g$.  If $d$ is even then $B(T_{g,d})$
implies $\event{g\notin \MM_n(H)}$.
\end{proposition}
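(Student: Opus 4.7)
The natural approach is induction on $d$. Because the recursive definition of $B(T_{g,d})$ alternates existential and universal quantifiers, a single statement about even $d$ will not close up under induction; I would therefore prove the proposition in tandem with a dual claim for odd $d$. Specifically, I would establish simultaneously, for $1 \le d \le D$:
\begin{itemize}
\item[(A$_d$)] $d$ even: $B(T_{g,d})$ implies $g \notin \MM_n(H)$;
\item[(B$_d$)] $d$ odd: if $g$ has a grandparent edge $g_{\mathrm{gp}}$ in $T$ with $\beta(g_{\mathrm{gp}}) > \beta(g)$, then $\ol{B(T_{g,d})}$ implies $g \in \MM_n(H)$.
\end{itemize}
The proposition itself is (A$_d$). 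The grandparent-birthtime hypothesis needed by (B$_d$) is automatically furnished one level up whenever (B$_{d-1}$) is invoked inside the proof of (A$_d$).

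The base case is (B$_1$). I would argue that no $G' \in \Lambda(g,1)$ can be fully present in the evolving graph at time $\beta(g)$. Any such $G'$ must have every edge born before $\beta(g) \le \rho$, so $G' \in \Lambda(g,\rho)$. If $G' \in \Gamma(g)$, then $\ol{B(T_{g,1})}$ directly furnishes an edge of $G'$ born after $g$; if instead $G' \in \Lambda(g,\rho) \setminus \Gamma(g)$, then by Definition~\ref{def::afdtree} we have $g_{\mathrm{gp}} \in G'$, and the hypothesis $\beta(g_{\mathrm{gp}}) > \beta(g)$ rules out the block. Hence $g \in \MM_n(H)$.

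For the inductive step (A$_d$) from (B$_{d-1}$), I would take a witness $G' \in \Gamma(g)$ of $B(T_{g,d})$. Every $g' \in G'$ then satisfies $\beta(g') < \beta(g)$ and $\ol{B(T_{g',d-1})}$, and since $g$ is the grandparent edge of $g'$ in $T$ and $\beta(g) > \beta(g')$, (B$_{d-1}$) applies to give $g' \in \MM_n(H)$. Hence $G'$ is a copy of $H \setminus \{g\}$ sitting entirely in $\MM_n(H)$ with every edge born before $g$, and the process rejects $g$ upon consideration. The symmetric step (B$_d$) from (A$_{d-1}$) follows the template of the base case: for each candidate blocker $G' \in \Lambda(g,1)$ one splits into the three cases $G' \in \Gamma(g)$, $G' \in \Lambda(g,\rho) \setminus \Gamma(g)$, and $G' \in \Lambda(g,1) \setminus \Lambda(g,\rho)$, invoking (A$_{d-1}$) on the edge $g'$ supplied by $\ol{B(T_{g,d})}$ in the first case and the grandparent birthtime hypothesis in the second.

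The main obstacle, and the reason the dual claim (B$_d$) must carry the extra birthtime hypothesis, is that the tree $T$ records only the subgraphs in $\Gamma(g) \subseteq \Lambda(g,\rho)$, while membership in $\MM_n(H)$ depends on all of $\Lambda(g,1)$. Subgraphs outside $\Lambda(g,\rho)$ contain an edge born after time $\rho \ge \beta(g)$ and so cannot block $g$, but the $O(1)$ subgraphs in $\Lambda(g,\rho) \setminus \Gamma(g)$ must be disposed of separately. By the construction of $T_{f,D}$ these are exactly the copies containing $g_{\mathrm{gp}}$, and they are neutralized precisely by the inequality $\beta(g_{\mathrm{gp}}) > \beta(g)$ supplied one level up. Verifying that this hand-off closes up cleanly against the parity of $d$ is the delicate bookkeeping step that drives the parity-indexed structure of the induction.
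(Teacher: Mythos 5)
Your proof is correct and takes essentially the same approach as the paper: the paper's proof is also an induction, but it proceeds in steps of two over even $d$ and folds the content of your dual claim (B$_{d-1}$) directly into the body of the inductive step (and handles the root case $\ol{B(T_{f,D})}\Rightarrow f\in\MM_n(H)$ as a separate corollary after the proposition), rather than stating (B$_d$) as a standalone claim in a mutual induction. Your explicit formulation of the grandparent-birthtime hand-off is a clean restatement of the same mechanism the paper uses implicitly when it observes that $G''\notin\Gamma(g')$ forces $g\in G''$.
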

\begin{proof}
The proof is by induction on $d$.  Since $g$ is a non-leaf and $d$
is even, we start with the case $d=2$ (so the distance from $g$ to
any leaf of $T_{g,d}$ is $4$).  Assume $B(T_{g,2})$ occurs. Then by
Definition~\ref{def::block} there exists  a graph $G' \in \Gamma(g)$
for which the following two events occur: (i)  for every $g' \in
G'$, $\event{\beta(g')<\beta(g)}$, and (ii) for every $g' \in G'$,
$\ol{B(T_{g',1})}$ occurs.
Now observe that in order to conclude that $\event{g\notin \MM_n(H)}$
occurs, it suffices to show that $\event{G' \subseteq \MM_n(H)}$ occurs.
To show the occurrence of the last event, it suffices to show that for
every edge $g' \in G'$ and for every graph $G'' \in \Lambda(g',\rho)$,
there exists an edge $g'' \in G''$ whose birthtime $\beta(g'')$ is larger
than $\beta(g')$.
Let us fix $g' \in G'$ and $G'' \in \Lambda(g',\rho)$.  We have two
cases.  If $G'' \in \Gamma(g')$ then by the fact that $\ol{B(T_{g',1})}$
occurs, we have indeed that there exists an edge $g'' \in G''$ such that
$\beta(g'') > \beta(g')$ (here we have used the fact that with probability
$1$, $\beta(g'') \ne \beta(g')$).  If on the other hand $G'' \notin
\Gamma(g')$ then by definition of $T$, $g \in G''$. Then, by item~(i)
above, we have that for some $g'' \in G''$, $\beta(g'') = \beta(g)
> \beta(g')$.  We thus conclude that $\event{g\notin \MM_n(H)}$ occurs.

Assume the proposition is valid for $d-2$. We prove it for $d$, so assume
that $B(T_{g,d})$ occurs. This implies by Definition~\ref{def::block}
that there exists  a graph $G' \in \Gamma(g)$ for which the following two
events occur: (i)  for every $g' \in G'$, $\event{\beta(g')<\beta(g)}$,
and (ii) for every $g' \in G'$, $\ol{B(T_{g',d-1})}$ occurs.
As before, to prove the proposition it suffices to show that $\event{G'
\subseteq \MM_n(H)}$ occurs. The occurrence of this last event can be
proved if we show that for every edge $g' \in G'$ and for every graph
$G'' \in \Lambda(g',\rho)$, there exists an edge $g'' \in G''$ such that
\emph{either} $\beta(g'') > \beta(g')$ \emph{or} $g'' \notin \MM_n(H)$.
Fix $g' \in G'$ and $G'' \in \Lambda(g',\rho)$.  As before, we have
two cases.  Suppose first that $G'' \in \Gamma(g')$. Then there exists
an edge $g'' \in G''$ such that either $\beta(g'') > \beta(g')$ or
$B(T_{g'',d-2})$ occurs.  This implies by the induction hypothesis that
either $\beta(g'') > \beta(g')$ or  $g'' \notin \MM_n(H)$, as required.
The second case is that of $G'' \notin \Gamma(g')$. Similarly to the base
case, this implies that there exists an edge $g'' \in G''$ (specifically
$g'' = g$) such that $\beta(g'') > \beta(g')$. Hence $\event{g \notin
\MM_n(H)}$ occurs.
\end{proof}

Recall Definition~\ref{def::kk} and note that $D$ is odd.  Hence, from
Definition~\ref{def::block} and Proposition~\ref{prop::pqp}, it follows
that, conditioning on $\event{\beta(f) < cn^{-(v_H-2)/(e_H-1)}}$,
$\ol{B({T_{f,D}})}$ implies $\event{f \in \MM_n(H)}$. (Indeed,
note that $\Gamma(f) = \Lambda(f,\rho)$ by definition.) Hence, we
get that the probability of $\eventcnd{f \in \MM_n(H)}{\beta(f) <
cn^{-(v_H-2)/(e_H-1)}}$ is lower bounded by
\begin{eqnarray}
\label{eq::todo}
\Probcnd{\ol{B(T_{f,D})}}{\beta(f) < cn^{-(v_H-2)/(e_H-1)}}.
\end{eqnarray}
In the two subsections below we lower bound~(\ref{eq::todo}). Specifically, we
show that
\begin{eqnarray}
\label{eq::todox}
\Probcnd{\ol{B(T_{f,D})}}{\beta(f) < cn^{-(v_H-2)/(e_H-1)}}  =
  \Omega\bigg(\frac{(\ln c)^{1/(e_H-1)}}{c}\bigg),
\end{eqnarray}
which, given the discussion above, proves Lemma~\ref{lemma::toprove2}.
In order to prove~(\ref{eq::todox}) it would be convenient to first
restrict ourselves to the following special case.  Consider the tree $RT_{f,d}$
which is obtained from $T_{f,d}$ as follows: For every non-leaf node
$g$ at even height in $T_{f,d}$, remove an arbitrary subset of the
subtrees rooted at the nodes adjacent to $g$, so that the outdegree
of $g$ becomes $\floor{\lambda k^{e_H-1}} - \floor{k^{e_H/2-1/3}}$
\emph{exactly}. Note that this can be done, as we assume that $E_1$
occurs. For a node $g$ at height $2d$ in $RT:=RT_{f,D}$, we denote by
$RT_{g,d}$ the subtree of $RT$ that is rooted at $g$.
Given this definition the task of lower
bounding~(\ref{eq::todo}) is now divided to two parts.  In the first part
(Section~\ref{sec::4a}), we prove~(\ref{eq::todox}) for the special
case that $T_{f,D}$ is replaced with $RT_{f,D}$.  In the second part
(Section~\ref{sec::4b}), we show that asymptotically, the probability
of $\eventcnd{\ol{B(T_{f,D})}}{\beta(f) < cn^{-(v_H-2)/(e_H-1)}}$
is equal to the probability of $\eventcnd{\ol{B(RT_{f,D})}}{\beta(f)
< cn^{-(v_H-2)/(e_H-1)}}$.  Combining these two parts, we will
conclude the validity of~(\ref{eq::todox}) and so also the validity of
Lemma~\ref{lemma::toprove2}.

\subsection{Analyzing $\ol{B(RT)}$}
\label{sec::4a}
For $x<y$, let $p_d(x,y)$ be the probability of $\ol{B(RT_{f,d})}$ 
conditioned on $\event{xn^{-(v_H-2)/(e_H-1)} \le
\beta(f) < yn^{-(v_H-2)/(e_H-1)}}$. The main result in this subsection follows.
\begin{lemma}
\label{lemma:brt}
$p_D(0,c) = \Omega\big(\frac{(\ln c)^{1/(e_H-1)}}{c}\big)$.
\end{lemma}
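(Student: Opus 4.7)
The plan is to exploit the good-tree structure of $RT$ to derive a clean one-dimensional recursion for the conditional survival probabilities, and then analyze it asymptotically. Because $RT$ is good, distinct label occurrences correspond to distinct edges, so subtrees rooted at distinct labels live on disjoint edge sets; conditioning on the birthtime at a common ancestor, the corresponding birthtime variables are therefore mutually independent. For a non-leaf node $g$ at even height $2d$ in $RT$, set $P_d(t) := \Probcnd{\ol{B(RT_{g,d})}}{\beta(g) = t}$. By symmetry---and because every non-leaf even-height node has the common out-degree $N := \floor{\lambda k^{e_H-1}} - \floor{k^{e_H/2-1/3}}$---this depends only on $d$. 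Unpacking Definition~\ref{def::block}, for each child $G' \in \Gamma(g)$ (of size $e_H-1$; recall that $\beta(g')$ is uniform on $[0,\rho]$ for every $g' \in G'$) the event ``$\beta(g') < t$ and $\ol{B(RT_{g',d-1})}$ for every $g' \in G'$'' has probability $A_{d-1}(t)^{e_H-1}$, where $A_d(t) := \rho^{-1}\int_0^t P_d(s)\,ds$. Intersecting the complements over the $N$ independent children yields (with $P_0 \equiv 1$)
\[
P_d(t) \;=\; \bigl(1 - A_{d-1}(t)^{e_H-1}\bigr)^N.
\]
Since $p_D(0,c) = (k/c)\,A_D(c n^{-\alpha})$ with $\alpha := (v_H-2)/(e_H-1)$, the lemma reduces to $A_D(cn^{-\alpha}) = \Omega((\ln c)^{1/(e_H-1)}/k)$.

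Rescaling via $\tilde A_d(x) := k\,A_d(xn^{-\alpha})$ (so $\tilde A_0(x) = x$), and using $N = (1+o(1))\lambda k^{e_H-1}$ together with $\tilde A_d(x) = o(k)$ throughout the relevant range $x \le c = n^{o(1)}$, the recursion becomes
\[
\tilde A_d(x) \;=\; (1+o(1))\int_0^x \exp\bigl(-\lambda \tilde A_{d-1}(y)^{e_H-1}\bigr)\,dy,
\]
and the target becomes $\tilde A_D(c) = \Omega((\ln c)^{1/(e_H-1)})$. The associated deterministic operator $T[A](x) := \int_0^x \exp(-\lambda A(y)^{e_H-1})\,dy$ has a fixed point $A^*$ solving the ODE $(A^*)' = \exp(-\lambda (A^*)^{e_H-1})$, whose leading asymptotic behavior is $A^*(x) \sim (\lambda^{-1}\ln x)^{1/(e_H-1)}$ as $x \to \infty$ (by a saddle-point analysis of the implicit integral $x = \int_0^{A^*} \exp(\lambda s^{e_H-1})\,ds$). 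So the fixed-point profile already encodes the target order of magnitude.

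To conclude I must show that after enough iterations $\tilde A_D(c)$ is a constant fraction of $A^*(c)$. I plan to proceed by induction on $d$, maintaining coupled envelopes $L_d(x) \le \tilde A_d(x) \le U_d(x)$ via $L_{d+1} := T[U_d]$ and $U_{d+1} := T[L_d]$ (modulo the $(1+o(1))$ errors), using that $T$ is monotone decreasing in its argument. Starting from $L_0 = U_0 = x$, already $L_1$ and $U_1$ are finite, and each pair of iterations contracts the relative gap between the two envelopes; carried out carefully, this yields $L_D(c),\,U_D(c) \in [\tfrac12 A^*(c),\,2A^*(c)]$ after $O(\ln\ln c)$ rounds, and $D = 2\floor{(\ln n)^{1/4}}+1$ is comfortably larger than $\ln\ln c$ when $c = \floor{(\ln n)^{1/(8e_H)}}$. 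The main obstacle is quantifying the contraction rate of the coupled iteration and controlling the cumulative $(1+o(1))$ multiplicative errors---coming from the approximations $(1-z)^N \approx e^{-Nz}$ and $N \approx \lambda k^{e_H-1}$---so that they do not compound destructively across the $\Theta(D)$ recursive applications; this will force one to carry iterates of the form $\tilde A_d(x) = (1\pm\epsilon_d) A^*(x)$ with $\epsilon_d$ shrinking at least geometrically in $d$.
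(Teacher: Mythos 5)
Your derivation of the recursion
\[
P_d(t) \;=\; \bigl(1 - A_{d-1}(t)^{e_H-1}\bigr)^N,
\qquad
A_d(t)=\rho^{-1}\int_0^t P_d(s)\,ds,
\]
from the good-tree structure (disjoint edge-sets across subtrees $\Rightarrow$ independence) is correct and is exactly the mechanism the paper uses; your rescaling to $\tilde A_d(x)$ and the identification of the fixed-point ODE $(A^*)' = e^{-\lambda (A^*)^{e_H-1}}$ with $A^*(x)\sim(\lambda^{-1}\ln x)^{1/(e_H-1)}$ are also right. The gap is in the concluding step, and it is more serious than your remark about ``controlling the cumulative $(1+o(1))$ errors'' suggests. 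The operator $T[A](x)=\int_0^x e^{-\lambda A(y)^{e_H-1}}\,dy$ is monotone decreasing, and starting from $\tilde A_0(x)=x\ge A^*(x)$ the iterates alternate: $A_1\le A_3\le\cdots\le A^*\le\cdots\le A_4\le A_2\le A_0$, with the odd iterates all \emph{bounded} ($A_{2d+1}(\infty)<\infty$) while the even iterates grow linearly. To obtain $\tilde A_D(c)=\Omega\big((\ln c)^{1/(e_H-1)}\big)$ you need the odd subsequence to converge \emph{to} $A^*$ at $c$ and not merely to some $L^-<A^*$; ruling out a genuine 2-cycle $(L^-,L^+)$ of $T$ and quantifying the pointwise rate is nontrivial. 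In particular, $T$ is not a contraction on $L^\infty[0,c]$: linearizing at $A^*$, $(DT_{A^*}h)(x)=-\int_0^x(\lambda (A^*)^{e_H-1})'(y)h(y)\,dy$ sends the constant $1$ to $-\lambda A^*(x)^{e_H-1}\sim-\ln x$, which is unbounded. So the ``coupled envelopes contract geometrically'' claim would require a carefully adapted (weighted) norm and a real argument; as written it is a missing idea.

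The paper avoids the entire convergence question. It derives essentially the same one-step inequality (using the good-tree independence with the worst/best trimmed out-degree $N$), namely a bound of the form
\[
p_D(0,c)\;\ge\;\frac{1}{c}\sum_{i=\lceil c/2\rceil}^{c-1}
\Bigl(1-\bigl(\tfrac{(i+1)\,p_{D-1}(0,i)}{k}\bigr)^{e_H-1}\Bigr)^{N},
\]
and then, instead of iterating down to $d=0$, it closes the loop with a \emph{height-reduction} lemma $p_D(0,x)\ge p_{D-1}(0,x)-2^{-D+1}$ (proved by showing the expected number of ``alive'' nodes at depth $D-1$ is at most $2^{-D+1}$, using $(D-1)!\gg(e_H\lambda c^{e_H-1})^{D-1}$), making the inequality self-referential in $p_D$ alone. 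Then a dichotomy finishes it: either some $p_D(0,i)$ with $i\in[\lceil c/2\rceil,c-1]$ already exceeds a threshold $\tau(i)=\Theta\big((\ln c)^{1/(e_H-1)}/c\big)$, in which case monotonicity of $p_D(0,\cdot)$ gives the claim directly; or all of them are below $\tau(i)$, in which case the right-hand side of the self-referential inequality is bounded below by $\frac{1}{c}\sum_i i^{-1/100}=\Omega(c^{-1/100})$, far more than needed. This sidesteps fixed points, contraction, and 2-cycles entirely, at the cost of giving only a lower bound rather than the asymptotic your approach targets. If you want to pursue your route you will need, at minimum, to prove that the alternating iteration has no nontrivial 2-cycle and to work in a norm in which the linearization of $T^2$ at $A^*$ is a strict contraction; otherwise consider adopting the paper's dichotomy.
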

To prove Lemma~\ref{lemma:brt}, we begin with the following proposition.
\begin{proposition}
\label{lemma::translate}
For $0 < x \le c$ and $n$ sufficiently large, 
$p_{D}(0,x) \ge p_{D-1}(0,x) - 2^{-D+1}$.
\end{proposition}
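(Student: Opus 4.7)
The plan is to exploit property~P2 of the good tree (disjoint edge labels across sibling subtrees), which makes the birth-time events on different subtrees independent. Let $h_d(s) := \probcnd{\ol{B(RT_{g,d})}}{\beta(g)=s}$; by the symmetry of $RT$ this is independent of the specific node $g$ at height $2d$, and independence across siblings yields the recursion
\begin{equation*}
h_d(s) \;=\; \bigl(1-H_{d-1}(s)^{e_H-1}\bigr)^{N}, \qquad H_d(s) := \frac{1}{\rho}\int_0^s h_d(t)\,dt,
\end{equation*}
with $h_0 \equiv 1$ and $N := \floor{\lambda k^{e_H-1}} - \floor{k^{e_H/2-1/3}}$ the common outdegree of $RT$ at every even-height node. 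A quick induction on $d$ gives the alternating inclusions $B(RT_{g,d-1}) \subseteq B(RT_{g,d})$ for $d$ odd and $B(RT_{g,d-1}) \supseteq B(RT_{g,d})$ for $d$ even, so since $D$ is odd by Definition~\ref{def::kk} we have $h_{D-1}(s) \ge h_D(s)$ pointwise, and it remains to prove the one-sided bound $p_{D-1}(0,x) - p_D(0,x) \le 2^{-D+1}$.

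Next I would trace the ``defect'' event $B(RT_{f,D}) \setminus B(RT_{f,D-1})$ recursively down the tree. Whenever it occurs, there must exist a witness $G' \in \Gamma(f)$ and a specific $g_1 \in G'$ with $\beta(g_1) < \beta(f)$, $\ol{B(RT_{g_1,D-1})}$ and $B(RT_{g_1,D-2})$; this is the analogous defect event one level down at~$g_1$. Iterating the descent $D-1$ times produces a chain of nodes $f = g_0, g_1, \ldots, g_{D-1}$ with strictly decreasing birth-times, where each $g_{i+1}$ lies in a witness graph whose remaining $e_H-2$ edges also satisfy birth-time constraints and $\ol{B}$-conditions controlled by the recursion for~$h_d$, and the chain bottoms out at the trivial event $B(RT_{g_{D-1},1}) \setminus B(RT_{g_{D-1},0}) = B(RT_{g_{D-1},1})$.

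I would then upper bound the probability of the existence of such a chain by a union bound over chains, writing the probability of a fixed chain as a nested integral over the ordered birth-times $\beta(f) > \beta(g_1) > \cdots > \beta(g_{D-1})$. Conditional on $\beta(f) \in [0, xn^{-(v_H-2)/(e_H-1)})$ with $x \le c$, the $(D-1)$-fold ordered integration within this tiny interval produces a factorial denominator $1/(D-1)!$, which, combined with the per-level $\ol{B}$-probability factors from the recursion for~$h_d$ and the combinatorial count of chains (at most $(N(e_H-1))^{D-1}$ with $N = n^{o(1)}$ since $k = n^{(\ln n)^{-1/2}}$), gives the claimed bound~$2^{-D+1}$.

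The main technical obstacle is the careful interplay of these three ingredients---ordered birth-time integration, combinatorial count, and recursive $\ol{B}$-factors---into a single clean bound. The key balance is that the superpolynomial growth of $(D-1)!$, together with the smallness of $x \le c = \floor{(\ln n)^{1/(8e_H)}}$ relative to $k$, defeats the combinatorial growth in~$N^{D-1}$; this balance depends crucially on the parameter choices in Definition~\ref{def::kk}, in particular the separation between $c$ (polylogarithmic) and $k$ (sub-polynomial but super-polylogarithmic).
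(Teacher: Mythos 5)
Your proposal is essentially the paper's own argument: the paper likewise reduces the defect $B(RT_{f,D}) \setminus B(RT_{f,D-1})$ to the existence of a chain of decreasing birthtimes down the tree (its event $\{\G_{D-1} \ne \emptyset\}$), bounds its probability via the first moment with a nested-integral computation yielding a $1/(D-1)!$ factor, and balances this against the $\sim(e_H\lambda c^{e_H-1})^{D-1}$ count to get $2^{-D+1}$. The additional scaffolding you introduce — the explicit recursion for $h_d(s)$ and the alternating inclusions — is correct but not needed in the paper's presentation; it establishes only the (unused) direction $p_{D-1} \ge p_D$.
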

\begin{proof}
Define
\begin{eqnarray*}
\G_i := \left\{ \begin{array}{ll}
 \Big\{ G_1 \in \Gamma_{RT}(f) : \forall g_1 \in G_1.\,\,\,
          \beta(g_1) < cn^{-(v_H-2)/(e_H-1)}\Big\}
 & \textrm{if $i = 1$,} \\
\bigcup_{g_{i-1} \in G_{i-1}\in \G_{i-1}}
  \Big\{ G_i \in \Gamma_{RT}(g_{i-1}) : \forall g_i \in G_i.\,\,\,
          \beta(g_i) < \beta(g_{i-1})\Big\}
 & 
         \textrm{if $2 \le i \le d$.}
\end{array} \right.
\end{eqnarray*}
By inspecting Definition~\ref{def::block}, one sees that conditioned on
$\event{\beta(f) < xn^{-(v_H-2)/(e_H-1)}}$, $\ol{B(RT_{f,D-1})}$
implies $\ol{B(RT_{f,D})} \cup \{\G_{D-1} \ne \emptyset\}$. Hence $p_{D-1}(0,x)
\le p_{D}(0,x) + \prob{\G_{D-1} \ne \emptyset}$ and so it suffices to prove that
$\prob{\G_{D-1} \ne \emptyset} \le 2^{-D+1}$. For that, we show that
the expected size of $\G_{D-1}$ is at most $2^{-D+1}$.

To estimate the expected size of $\G_{D-1}$, we first give an upper bound on 
the probability that a given subgraph $G_{D-1}$ is included in the set
 $\G_{D-1}$. For that, let us consider the unique path 
$(f=g_0,G_1,g_1,\ldots,g_{D-2},G_{D-1})$ from the root of $RT_{f,D}$ to
 $G_{D-1}$. Then $G_1$ is included in $\G_1$ if and only if 
$\beta(g) < cn^{-(v_H-2)/(e_H-1)}$ for every $g \in
G_1$. For $j \ge 2$, $G_j$ is included in $\G_j$ if and only if $G_i$
is included in $\G_i$ for every $i < j$ and in addition, $\beta(g) <
\beta(g_{j-1})$ for every $g \in G_j$.  Now, the probability of $\event{G_1
\in \G_1}$ is exactly $(c/k)^{e_H-1}$. Given $G_1 \in \G_1$, we have that
$g_1$ is uniformly distributed in $[0,cn^{-(v_H-2)/(e_H-1)})$.  Moreover,
for $j \ge 2$, given that $G_{j} \in \G_{j}$, we
have that $g_j$ is uniformly distributed in $[0,\beta(g_{j-1}))$. Hence,
it follows that 
\begin{eqnarray}
\nonumber
\prob{G_{D-1} \in \G_{D-1}} &=&
\int_0^1 \ldots \int_0^{1}
\Big(\frac{c}{k}\Big)^{e_H-1} \cdot 
\prod_{j=2}^{{D-1}}
\Big(\frac{c}{k} \cdot \prod_{i=1}^{j-1}x_i\Big)^{e_H-1} \,\,\,
{d}x_1 \,
{d}x_2 \, \ldots \,
{d}x_{D-2} \\
\nonumber
&=& \frac{c^{(e_H-1)(D-1)}}{k^{(e_H-1)(D-1)}}
\cdot \frac{1}{ 
          \prod_{i=1}^{D-2} (i(e_H-1)+1)
               } \\
\label{eq::integ}
&<&
\frac{c^{(e_H-1)(D-1)}}{k^{(e_H-1)(D-1)} (D-1)!},
\end{eqnarray}
where the inequality follows from $e_H \ge 3$.  Since there are no
more than $(e_H \lambda k^{e_H-1})^{D-1}$ nodes at distance $2(D-1)-1$ from
the root of $RT$, we deduce from~(\ref{eq::integ}) that the expected
number of nodes in $\G_{D-1}$ is at most $(e_H \lambda 
c^{e_H-1})^{D-1}/((D-1)!)$.  Since $(D-1)! \ge ((D-1)/3)^{D-1}$ and 
since $D-1 \ge 6 e_H \lambda c^{e_H-1}$ for $n$ sufficiently large,
the expected number of nodes in $\G_{D-1}$ is at most
\begin{eqnarray*}
\frac{(3 e_H \lambda c^{e_H-1})^{D-1}}{(D-1)^{D-1}} \le 
\frac{(3 e_H \lambda c^{e_H-1})^{D-1}}{(6 e_H \lambda c^{e_H-1})^{D-1}} =
2^{-D+1},
\end{eqnarray*}
as required.  This completes the proof.
\end{proof}
We also need the following fact.
\begin{proposition}
\label{fact::mono}
Let $x<y$. Then
$p_d(0,x) \ge p_d(0,y)$ and $p_d(0,x) \ge p_d(x,y)$.
\end{proposition}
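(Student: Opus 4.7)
My plan is to derive both inequalities from a single pointwise monotonicity statement. Set $\alpha := n^{-(v_H-2)/(e_H-1)}$ and define the pointwise conditional probability
\[
\tilde{p}_d(\tau) \;:=\; \Probcnd{\ol{B(RT_{f,d})}}{\beta(f)=\tau}.
\]
Then $p_d(a,b) = \frac{1}{(b-a)\alpha}\int_{a\alpha}^{b\alpha}\tilde{p}_d(\tau)\,d\tau$ is just the average value of $\tilde{p}_d$ on $[a\alpha,b\alpha]$. Provided I can show that $\tilde{p}_d$ is non-increasing on $[0,\rho]$, both claims follow immediately: the average of a non-increasing function on $[0,x\alpha]$ is at least its average on the larger interval $[0,y\alpha]$, giving $p_d(0,x)\ge p_d(0,y)$; and its average on $[0,x\alpha]$ is at least its average on the later interval $[x\alpha,y\alpha]$, giving $p_d(0,x)\ge p_d(x,y)$.

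To establish monotonicity I proceed by induction on $d$. The base case $d=0$ is trivial because $B(RT_{f,0})=\emptyset$ and hence $\tilde{p}_0\equiv 1$. For the inductive step the crucial input is goodness of the fixed tree $T=T_{f,D}$: by properties P1 and P2, all edge-labels of $T$ are pairwise distinct, so in particular the subtrees $T_{g',d-1}$ attached at the children of $f$ involve pairwise disjoint sets of edges of $K_n$, and $f$ does not appear inside any of them. Conditioned on $\beta(f)=\tau$, the birthtimes of all other edges of $T$ are therefore independent and each is uniform on $[0,\rho]$. Unfolding Definition~\ref{def::block}, I expect to obtain an identity of the form
\[
\tilde{p}_d(\tau) \;=\; \prod_{G'\in\Gamma_{RT}(f)}\Bigg(1-\prod_{g'\in G'}\frac{1}{\rho}\int_0^{\tau}\tilde{p}_{d-1}(s)\,ds\Bigg),
\]
from which monotonicity is immediate: by the induction hypothesis $\tilde{p}_{d-1}\ge 0$ is non-increasing, so the inner integral is non-decreasing in $\tau$, each factor of the outer product is therefore non-increasing in $\tau$, and so is the whole product.

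The one point that deserves care, and really the only potential obstacle, is justifying the factorisation above. The event $\ol{B(RT_{g',d-1})}$ depends only on the birthtimes of edges sitting inside the subtree $T_{g',d-1}$, and goodness of $T$ is exactly what guarantees that these bundles of birthtimes are disjoint across the different $g'\in G'$ and across the different $G'\in\Gamma_{RT}(f)$, and also disjoint from $\{f\}$. Once that independence is verified, the factorisation is a direct computation and the monotonicity argument and subsequent averaging step are routine.
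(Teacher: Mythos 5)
Your proof is correct and reaches the stated conclusion, but via a genuinely different mechanism than the paper uses. The paper's argument is a direct pathwise coupling: fix any birthtime realization $\beta$ with $\beta(f)\in[xn^{-(v_H-2)/(e_H-1)},yn^{-(v_H-2)/(e_H-1)})$ under which $\ol{B(RT_{f,d})}$ occurs, then lower $\beta(f)$ to a value below $xn^{-(v_H-2)/(e_H-1)}$ and observe from Definition~\ref{def::block} that $\ol{B(RT_{f,d})}$ still occurs, since decreasing $\beta(f)$ only enlarges the set of edges $g'$ with $\beta(g')\ge\beta(f)$, and (by goodness) $f$ does not appear in any subtree $RT_{g',d-1}$, so those sub-events are untouched. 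This gives $p_d(0,x)\ge p_d(x,y)$ directly, and $p_d(0,x)\ge p_d(0,y)$ follows from the weighted-average decomposition, with no recursion or independence computation needed. You instead derive a closed-form recursion for the pointwise conditional probability $\tilde p_d(\tau)$ and read off monotonicity from it. The factorisation you ``expect'' does hold exactly: goodness (properties P1, P2) gives edge-disjointness across the subtrees, so the relevant birthtime collections are independent, and the self-similarity of $RT$ (every non-leaf node at even height, including the root, has out-degree exactly $\floor{\lambda k^{e_H-1}}-\floor{k^{e_H/2-1/3}}$) identifies $\Probcnd{\ol{B(RT_{g',d-1})}}{\beta(g')=s}$ with your $\tilde p_{d-1}(s)$. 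Your route is heavier machinery than this proposition requires, though the recursion you write down is essentially the one that drives the rest of Section~\ref{sec::4a}, so the effort is not wasted. One small observation: in your inductive step you invoke the hypothesis that $\tilde p_{d-1}$ is non-increasing, but all you actually use is $\tilde p_{d-1}\ge 0$ (so that $\int_0^\tau\tilde p_{d-1}(s)\,ds$ is non-decreasing in $\tau$), which is automatic since it is a probability; the induction is therefore superfluous, and monotonicity of $\tilde p_d$ follows directly from the recursion for every $d\ge1$.
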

\begin{proof}
Since $p_d(0,y)$ is the weighted average of $p_d(0,x)$ and $p_d(x,y)$, it
is enough to show that $p_d(0,x) \ge p_d(x,y)$.
Take any birthtime function $\beta$ under which $\ol{B(RT_{f,d})}$
occurs, with $xn^{-(v_H-2)/(e_H-1)} \le \beta(f) < yn^{-(v_H-2)/(e_H-1)}$.
Now alter $\beta(f)$ so that we have $\beta(f) < xn^{-(v_H-2)/(e_H-1)}$. It
is easy to check given Definition~\ref{def::block} that $\ol{B(RT_{f,d})}$
occurs even after the above alteration of $\beta$. This implies that
$p_d(0,x) \ge p_d(x,y)$.
\end{proof}

We now turn to lower bound $p_D(0,c)$.  Let $g \in G \in \Gamma_{RT}(f)$.  Condition on
the occurrence of $\event{in^{-(v_H-2)/(e_H-1)}\le \beta(f) < (i+1)n^{-(v_H-2)/(e_H-1)}}$ 
for some $i \in [c-1]$. Under that condition we have $\prob{\beta(g) < \beta(f)} \le (i+1)/k$. 
If we further condition on $\event{\beta(g) < \beta(f)}$ then we have
by Proposition~\ref{fact::mono} (and the fact that $RT_{g,D-1}$ is isomorphic to
$T_{f,D-1}$) that the probability of $\ol{B(RT_{g,D-1})}$ is upper bounded by
$p_{D-1}(0,i)$. From these observations, using Definition~\ref{def::block} and
the fact that $RT_{f,D}$ is a subtree of the good tree $T_{f,D}$, we get that
for all $i \in [c-1]$,
\begin{eqnarray*}
p_D(i,i+1) \ge 
  \bigg(
     1 - \bigg(\frac{(i+1) \cdot p_{D-1}(0,i)}{k}\bigg)^{e_H-1}
  \bigg)^{\floor{\lambda k^{e_H-1}} - \floor{k^{e_H/2-1/3}}}.
\end{eqnarray*}
Hence, by Proposition~\ref{lemma::translate} we get
\begin{eqnarray}
\nonumber
p_D(0,c) &\ge& \frac{1}{c} \, \sum_{i=\ceil{c/2}}^{c-1} p_D(i,i+1) \\
\nonumber
       &\ge& \frac{1}{c} \, \sum_{i=\ceil{c/2}}^{c-1} 
                   \bigg( 1 - \bigg(\frac{(i+1) \cdot 
         p_{D-1}(0,i)}{k}\bigg)^{e_H-1} \bigg)^{\floor{\lambda k^{e_H-1}} - \floor{k^{e_H/2-1/3}}} \\
\label{eq:sum}
       &\ge& \frac{1}{c} \, \sum_{i=\ceil{c/2}}^{c-1} 
                   \bigg( 1 - \bigg(\frac{(i+1) \cdot 
         (p_D(0,i)+2^{-D+1})}{k}\bigg)^{e_H-1} \bigg)^{\floor{\lambda k^{e_H-1}} - \floor{k^{e_H/2-1/3}}}.
\end{eqnarray}
Define 
\begin{eqnarray*}
\tau(i) = \frac{((100\lambda)^{-1}\ln i)^{1/(e_H-1)}}{i+1} - 2^{-D+1}.
\end{eqnarray*}
We have two cases. First assume that $p_D(0,i) \ge \tau(i)$
for some integer $i$, $\ceil{c/2} \le i \le c-1$.
In that case the proof is complete, since by Proposition~\ref{fact::mono}, 
we get 
\begin{eqnarray*}
p_D(0,c) \ge \frac{p_D(0,\ceil{c/2})}{2} \ge \frac{p_D(0,i)}{2} \ge \frac{\tau(i)}{2} = 
\Omega\Big(\frac{(\ln c)^{1/(e_H-1)}}{c}\Big).
\end{eqnarray*}
Next, assume that $p_D(0,i) < \tau(i)$ for all integers $i$, $\ceil{c/2} \le i \le c-1$.
In that case, by replacing $p_D(0,i)$ with $\tau(i)$ in the sum above, we get
\begin{eqnarray*}
p_D(0,c) \ge 
 \frac{1}{c} \, \sum_{i=\ceil{c/2}}^{c-1} 
  \bigg( 1 - \frac{\ln i}{100\lambda k^{e_H-1}}\bigg)^{\lambda k^{e_H-1}} = 
\Omega\Big(\frac{(\ln c)^{1/(e_H-1)}}{c}\Big).
\end{eqnarray*}

Note that Lemma~\ref{lemma:brt} gives the validity of~(\ref{eq::todox}) for
 the case where $T_{f,D}$ is replaced with $RT_{f,D}$. 
In the next subsection we show that asymptotically, the
probability of $\ol{B(T_{f,D})}$ conditioned on $\event{\beta(f) <
cn^{-(v_H-2)/(e_H-1)}}$ is equal to $p_D(0,c)$.  This will prove~(\ref{eq::todox}) 
and hence also Lemma~\ref{lemma::toprove2}.

\subsection{On $\ol{B(RT)}$ versus $\ol{B(T)}$}
\label{sec::4b}
For $x<y$ and for a node $g$ at height $2d$ in $T$, let
$q_{g,d}(x,y)$ be the probability of $\ol{B(T_{g,d})}$
conditioned on $\event{xn^{-(v_H-2)/(e_H-1)} \le \beta(g) < yn^{-(v_H-2)/(e_H-1)}}$.
The main result in this subsection follows.
\begin{lemma}
\label{lemma:asmeq}
$(1-o(1)) \cdot p_D(0,c) \le q_{f,D}(0,c) \le (1+o(1)) \cdot p_D(0,c)$.
\end{lemma}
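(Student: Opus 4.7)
I would prove Lemma~\ref{lemma:asmeq} by an inductive comparison, level by level, of the conditional survival probabilities attached to subtrees of $T$ and of $RT$, and then argue that the accumulated additive error is superpolynomially smaller than the lower bound $p_D(0,c) = \Omega((\ln c)^{1/(e_H-1)}/c)$ furnished by Lemma~\ref{lemma:brt}. Set $\alpha := (v_H-2)/(e_H-1)$, and for a node $g$ at height $2d$ and $s \in [0,c]$ put $\tilde{q}_{g,d}(s) := \Pr(\ol{B(T_{g,d})}\mid\beta(g)=sn^{-\alpha})$ and $\tilde{p}_{g,d}(s)$ the analogous quantity for $RT_{g,d}$. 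Property~P2 of the good tree (inherited by the subtree $RT$) says that distinct subgraph-nodes in $T$ have edge-disjoint labels; iterating this down the tree, the subtrees rooted at distinct members of $\Gamma_T(g)$ are supported on disjoint edge sets, so the ``$G'$ blocks $g$'' events are mutually independent given $\beta(g)$. Using in addition that each $\beta(g')$ is uniform on $[0,\rho]$ (given $g' \in \GG(n,\rho)$), a short computation then yields the product formulas
\begin{displaymath}
\tilde{q}_{g,d}(s) = \prod_{G' \in \Gamma_T(g)}\bigl(1 - \sigma_{G'}^T(s)\bigr), \qquad \sigma_{G'}^T(s) = (s/k)^{e_H-1}\prod_{g' \in G'}q_{g',d-1}(0,s),
\end{displaymath}
and the analogous formula for $\tilde{p}_{g,d}(s)$ in which each $q_{g',d-1}(0,s)$ is replaced by the ($g'$-independent) quantity $p_{d-1}(0,s)$.

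Next I would set $\delta_d := \sup_{g,\,s \in [0,c]}|\tilde{q}_{g,d}(s) - \tilde{p}_{g,d}(s)|$ (with $\delta_0 = 0$ as trivial base case, since $B(T_{g,0}) = B(RT_{g,0}) = \emptyset$) and establish the recursion
\begin{displaymath}
\delta_d \;\le\; \mu \;+\; C\,c^{e_H-1}\,\delta_{d-1}, \qquad \mu := c^{e_H-1}/k^{e_H/2-2/3},
\end{displaymath}
where $C$ depends only on $H$. The derivation splits $\tilde{q}_{g,d}(s) - \tilde{p}_{g,d}(s)$ into an ``extra-children'' contribution $\prod_{\Gamma_T}(1-\sigma^T) - \prod_{\Gamma_{RT}}(1-\sigma^T)$, bounded using $E_1$ by $|\Gamma_T(g)\setminus\Gamma_{RT}(g)| \cdot (c/k)^{e_H-1} \le O(k^{e_H/2-1/3})(c/k)^{e_H-1} \le \mu$, and a ``common-children'' contribution $\prod_{\Gamma_{RT}}(1-\sigma^T) - \prod_{\Gamma_{RT}}(1-\sigma^{RT})$ bounded via the telescoping estimate $|\prod(1-a_i)-\prod(1-b_i)| \le \sum|a_i-b_i|$ applied twice---once over $G' \in \Gamma_{RT}(g)$ and once inside $|\prod_{g' \in G'}q_{g',d-1}(0,s) - p_{d-1}(0,s)^{e_H-1}|$ over $g' \in G'$---by at most $|\Gamma_{RT}(g)|(c/k)^{e_H-1}(e_H-1)\delta_{d-1} \le \lambda(e_H-1)c^{e_H-1}\delta_{d-1}$; the inner telescoping uses that averaging preserves additive error, so $|q_{g',d-1}(0,s) - p_{d-1}(0,s)| \le \delta_{d-1}$.

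Finally I would unroll the recursion to $\delta_D \le \mu D (Cc^{e_H-1})^D$. Plugging in the parameters from Definition~\ref{def::kk}, namely $k = \exp((\ln n)^{1/2})$, $c \le (\ln n)^{1/(8e_H)}$, and $D = O((\ln n)^{1/4})$, the factor $\mu = \exp(-\Omega((\ln n)^{1/2}))$ swamps the polylogarithmic blow-up $(Cc^{e_H-1})^D = \exp(O((\ln n)^{1/4}\ln\ln n)) = \exp(o((\ln n)^{1/2}))$, so $\delta_D = \exp(-\Omega((\ln n)^{1/2}))$. Averaging over $s \in [0,c]$ yields $|q_{f,D}(0,c) - p_D(0,c)| \le \delta_D$, which is dwarfed by $p_D(0,c) = \Omega((\ln c)^{1/(e_H-1)}/c)$ from Lemma~\ref{lemma:brt}, so $q_{f,D}(0,c) = (1 \pm o(1))p_D(0,c)$. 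The hardest point will be the clean justification of the independence underlying the product formulas above: it is precisely where the goodness of $T$ (property~P2) is indispensable, since without the disjointness of subgraph labels the sibling ``$G'$ blocks'' events are correlated and the whole level-by-level telescoping argument collapses.
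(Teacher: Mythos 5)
Your proposal is correct, and it arrives at the lemma by a genuinely different route from the paper. The paper conditions on $\beta(g)$ lying in tiny intervals $[jn^{-\alpha},(j+\delta)n^{-\alpha})$, $j\in J$, with step $\delta\le k^{-20e_H}$, and runs an induction on \emph{multiplicative} relative error: Claim~\ref{prop:facts} gives four one-sided product \emph{inequalities} (the inequalities arise precisely because the conditioning is on an interval, not a point), Proposition~\ref{prop:final} propagates a $(1\pm\epsilon_d/2)$ comparison with $\epsilon_d=(128e_Hc)^{2de_H}/k^{e_H/2-2/3}$, and Proposition~\ref{prop::pa1} converts it to a $(1\pm\epsilon_d)$ comparison of $q_{g,d}(0,j)$ with $p_d(0,j)$, with a separate crude argument for $j$ near $0$ where both probabilities are close to $1$. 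You instead condition on the exact value $\beta(g)=sn^{-\alpha}$, which makes the product formula an \emph{equality} (no interval slack), and you propagate an \emph{additive} error $\delta_d=\sup_s|\tilde q-\tilde p|$ by a telescoping recursion $\delta_d\le\mu+Cc^{e_H-1}\delta_{d-1}$; the discretization grid, the four correction factors in Claim~\ref{prop:facts}, and the case split in Proposition~\ref{prop::pa1} disappear. Both arguments hinge on the identical structural input--property P2 of the good tree, which makes all subgraph labels pairwise edge-disjoint and hence all sibling ``blocking'' events independent--and your observation that goodness is the linchpin is exactly right. What the multiplicative route buys is that the final condition is just $\epsilon_D=o(1)$, with only a weak implicit lower bound on $p_d(0,c)$ hidden in the choice of $\delta$; what the additive route buys is brevity and the elimination of the grid, at the cost of invoking the full strength of Lemma~\ref{lemma:brt} ($p_D(0,c)=\Omega((\ln c)^{1/(e_H-1)}/c)$) to see that $\delta_D=\exp(-\Omega((\ln n)^{1/2}))$ is negligible in relative terms. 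One point worth spelling out if you write this up: to justify that $\tilde p_{g,d}(s)$ is independent of $g$ (so $p_{d-1}(0,s)$ is unambiguous), you need that all subtrees $RT_{g,d}$ at a fixed height are isomorphic as rooted trees; this follows from the construction of $RT$ (exact outdegree $\floor{\lambda k^{e_H-1}}-\floor{k^{e_H/2-1/3}}$ at even-height non-leaves, exactly $e_H-1$ children at odd-height nodes) together with the i.i.d.\ uniformity of birthtimes on $[0,\rho]$ for edges of $\GG(n,\rho)$.
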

Note that Lemma~\ref{lemma:asmeq} together with Lemma~\ref{lemma:brt}
implies~(\ref{eq::todox}), which in turn implies Lemma~\ref{lemma::toprove2}.
We begin the proof of Lemma~\ref{lemma:asmeq} by giving a few useful
definitions which we use throughout the proof.
\begin{definition}
\mbox{}
\begin{itemize}
\item Define for every integer $d \le D$, 
\begin{displaymath}
\epsilon_d := \frac{(128e_Hc)^{2de_H}}{k^{e_H/2-2/3}}.  
\end{displaymath}
\item Fix $\delta > 0$ which satisfies the following:
(i)~$\delta \le k^{-20e_H}$, (ii)~$\delta^{1/4} = o(\epsilon_d \cdot p_{d}(0,c))$ for all $d \in [D]$, 
(iii)~$(1-(2\delta^{1/4}/k)^{e_H-1})^{2k^{e_H-1}} \ge 1-\epsilon_d/2$ for all $d \in [D]$, and (iv)~$1/\sqrt{\delta} \in \N$.
\item Define $J := \{\sqrt{\delta}, \sqrt{\delta}+\delta, \sqrt{\delta}+2\delta, \ldots, c-2\delta, c-\delta\}$.\
\end{itemize}
\end{definition}
\begin{proposition}
\label{prop::pa1}
Let $g$ be a node at height $2d$ in $T$, $0 \le d \le D$.
Assume that for all $j \in J$,
\begin{eqnarray*}
(1-\epsilon_d/2) \cdot p_d(j,j+\delta) \le q_{g,d}(j,j+\delta) \le (1+\epsilon_d/2) \cdot p_d(j,j+\delta).
\end{eqnarray*}
Then for all $j \in J \cup \{c\}$,
\begin{eqnarray*}
(1-\epsilon_d) \cdot p_d(0,j) \le q_{g,d}(0,j) \le (1+\epsilon_d) \cdot p_d(0,j).
\end{eqnarray*}
\end{proposition}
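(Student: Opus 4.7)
My plan is to express both $p_d(0,j)$ and $q_{g,d}(0,j)$ as weighted averages over a common partition of the interval $[0,\,jn^{-(v_H-2)/(e_H-1)})$, use the hypothesis on the bulk subintervals, and handle the leading subinterval separately. Concretely, partition $[0,\,jn^{-(v_H-2)/(e_H-1)})$ into the initial block $[0,\,\sqrt{\delta}\,n^{-(v_H-2)/(e_H-1)})$ together with the intervals $[j'n^{-(v_H-2)/(e_H-1)},\,(j'+\delta)n^{-(v_H-2)/(e_H-1)})$ for $j' \in J$ with $j'<j$; this is an exact partition of $[0,jn^{-(v_H-2)/(e_H-1)})$ because $J = \{\sqrt{\delta},\sqrt{\delta}+\delta,\dots,c-\delta\}$ and $j \in J \cup \{c\}$. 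Uniformity of $\beta(f)$ (resp.\ $\beta(g)$) combined with the law of total probability yields
\[
j \cdot p_d(0,j) = \sqrt{\delta}\,p_d(0,\sqrt{\delta}) + \delta \sum_{j' \in J,\,j' < j} p_d(j',j'+\delta),
\]
and the same identity with $q_{g,d}$ in place of $p_d$. Since the hypothesis covers every term in the bulk sum, only the initial term $p_d(0,\sqrt{\delta})$ versus $q_{g,d}(0,\sqrt{\delta})$ requires independent attention.

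To handle the initial block, I would show that both $p_d(0,\sqrt{\delta})$ and $q_{g,d}(0,\sqrt{\delta})$ lie in $[1 - O(\delta^{(e_H-1)/2}),\,1]$. The event $B$ at the root forces some $G' \in \Gamma(\cdot)$ to have all $e_H-1$ of its edges born before the root; given the root's birthtime equals $s$, each $\beta(g')$ for $g' \in G'$ is uniform on $[0,\rho]$ independently, so the contribution of a fixed $G'$ is $\int_0^{\sqrt{\delta}n^{-(v_H-2)/(e_H-1)}} (s/\rho)^{e_H-1}\,ds\,/\,\sqrt{\delta}n^{-(v_H-2)/(e_H-1)} = \delta^{(e_H-1)/2}/(e_H k^{e_H-1})$. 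Summing over the at-most-$2\lambda k^{e_H-1}$ children (using $E_1$, valid for both $T_{g,d}$ and $RT_{f,d}$), the probability of $B$ conditioned on the root's birthtime being below $\sqrt{\delta}n^{-(v_H-2)/(e_H-1)}$ is at most $2\lambda\delta^{(e_H-1)/2}/e_H$. Since $e_H \ge 3$ gives $\delta^{(e_H-1)/2} \le \delta$, we obtain $|q_{g,d}(0,\sqrt{\delta}) - p_d(0,\sqrt{\delta})| = O(\delta)$; combined with $\delta = o(\epsilon_d\,p_d(0,c))$ from condition~(ii), this already settles the proposition for $j = \sqrt{\delta}$.

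For $j > \sqrt{\delta}$ in $J \cup \{c\}$, I would subtract the two decomposition identities, obtaining
\[
q_{g,d}(0,j) - p_d(0,j) = \frac{\sqrt{\delta}}{j}\bigl(q_{g,d}(0,\sqrt{\delta}) - p_d(0,\sqrt{\delta})\bigr) + \frac{\delta}{j}\sum_{j' \in J,\,j'<j}\bigl(q_{g,d}(j',j'+\delta) - p_d(j',j'+\delta)\bigr).
\]
The initial-interval term is $O(\sqrt{\delta}\cdot\delta/j) \le O(\delta)$ because $j \ge \sqrt{\delta}$. For the bulk sum, the hypothesis bounds each summand's absolute value by $(\epsilon_d/2)\,p_d(j',j'+\delta)$, so the whole bulk sum is at most $(\epsilon_d/2)\,p_d(0,j)$ in absolute value. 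Finally, using Proposition~\ref{fact::mono} to get $p_d(0,j) \ge p_d(0,c)$ and condition~(ii) to get $\delta = o(\epsilon_d\,p_d(0,c))$, the initial-interval error $O(\delta)$ is also at most $(\epsilon_d/2)\,p_d(0,j)$ for $n$ large. Adding the two estimates gives $|q_{g,d}(0,j) - p_d(0,j)| \le \epsilon_d\,p_d(0,j)$, as required.

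The main obstacle is the initial block $[0,\,\sqrt{\delta}\,n^{-(v_H-2)/(e_H-1)})$, where the hypothesis gives no information. The saving grace is that the smallness $\sqrt{\delta} \le k^{-10e_H}$ (from condition~(i)) makes $B$ essentially impossible, so both probabilities are pinned near $1$ irrespective of whether one uses the full tree $T_{g,d}$ or the regularized tree $RT_{f,d}$; the quantitative slack in condition~(ii) is then exactly what is needed to absorb this tiny discrepancy into the overall $\epsilon_d$-error.
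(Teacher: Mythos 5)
Your proof is correct and follows the same basic skeleton as the paper's, but with a genuinely sharper handling of the initial block. Both proofs decompose $p_d(0,j)$ and $q_{g,d}(0,j)$ over the partition of $[0,jn^{-(v_H-2)/(e_H-1)})$ into the initial interval $[0,\sqrt{\delta}n^{-(v_H-2)/(e_H-1)})$ and the length-$\delta$ subintervals indexed by $J$, and both invoke Proposition~\ref{fact::mono} and condition~(ii) on $\delta$ to absorb the leftover error. The difference is how the initial block is treated. The paper simply bounds its contribution to $p_d(0,j)$ (and $q_{g,d}(0,j)$) crudely by $O(\sqrt{\delta}/j)$ --- using only that both are probabilities in $[0,1]$ --- which is only small enough when $j > \sqrt{\delta}+\delta^{1/4}$; for smaller $j$ it then falls back to a direct argument showing both $p_d(0,j)$ and $q_{g,d}(0,j)$ are at least $1-\epsilon_d/2$, using condition~(iii) on $\delta$. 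You instead compute the initial-block \emph{discrepancy} directly: a union bound over $\Gamma(\cdot)$, together with $\int_0^{\sqrt{\delta}\rho/k}(s/\rho)^{e_H-1}\,ds/(\sqrt{\delta}\rho/k)=\delta^{(e_H-1)/2}/(e_Hk^{e_H-1})$, shows that both $p_d(0,\sqrt{\delta})$ and $q_{g,d}(0,\sqrt{\delta})$ lie in $[1-O(\delta^{(e_H-1)/2}),1]$, hence $|q_{g,d}(0,\sqrt{\delta})-p_d(0,\sqrt{\delta})|=O(\delta)$. This tighter bound makes the initial-block error $O(\delta)=o(\epsilon_d\,p_d(0,c))$ for \emph{every} $j\ge\sqrt{\delta}$, so the threshold $\sqrt{\delta}+\delta^{1/4}$ disappears and condition~(iii) is never invoked. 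Your route is thus a modest simplification of the paper's argument at this step (you should still note the trivial $d=0$ base case, where $B(T_{g,0})=\emptyset$ makes everything equal to $1$, but that is immediate).
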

\begin{proof}
First note that the conclusion in the proposition is trivially true for $d=0$, since
$q_{g,d}(0,j) = p_d(0,j) = 1$ for all $j \in J \cup \{c\}$. Hence, we may assume that $d \ge 1$.

Fix $j \in J \cup \{c\}$. Assume first that $j > \sqrt{\delta} + \delta^{1/4}$.
Trivially we have:
\begin{eqnarray*}
p_{d}(0,j) &=& O\Big(\frac{\sqrt{\delta}}{j}\Big) +
                \frac{\delta}{j} \cdot \sum_{j' \in J : j' < j} p_{d}(j',j'+\delta), \\
q_{g,d}(0,j) &=& O\Big(\frac{\sqrt{\delta}}{j}\Big) + \frac{\delta}{j} \cdot \sum_{j' \in J : j' < j} q_{g,d}(j',j'+\delta).
\end{eqnarray*}
Since $j > \sqrt{\delta} + \delta^{1/4}$ we have $\sqrt{\delta}/j \le \delta^{1/4}$. By definition,
$\delta^{1/4} = o(\epsilon_{d} \cdot p_{d}(0,c))$. By Proposition~\ref{fact::mono} and
since $j \le c$, we have $p_{d}(0,c) \le p_{d}(0,j)$.  Therefore, $\sqrt{\delta}/j \le \delta^{1/4} =
o(\epsilon_d \cdot p_{d}(0,j))$.  This, it can be verified, 
together with the two equalities above and with the assumptions given in the
proposition, implies the validity of the claim for $j$.

Next, assume $j \le \sqrt{\delta} + \delta^{1/4}$. Crudely, $q_{g,d}(0,j)$ is lower bounded by the
probability of the event that for every $G \in \Gamma_T(g)$ there is an edge $g' \in G$ with
$\event{\beta(g') > \beta(g)}$. Hence, since $|\Gamma_T(g)| \le 2k^{e_H-1}$ and by definition of $\delta$,
\begin{eqnarray*}
q_{g,d}(0,j) \ge \Big(1 - \Big(\frac{2\delta^{1/4}}{k}\Big)^{e_H-1}\Big)^{2k^{e_H-1}} \ge 1-\epsilon_d/2.
\end{eqnarray*}
Trivially, $p_{d}(0,j) \le 1$. Hence, it follows that $q_{g,d}(0,j)$ is lower bounded
by $(1-\epsilon_d) \cdot p_{d}(0,j)$, as required.
The argument for the upper bound is similar. Trivially, $q_{g,d}(0,j) \le 1$.
Also, by an argument similar to the one used above, $p_d(0,j) \ge 1-\epsilon_d/2$. 
Hence, one can verify that indeed $q_{g,d}(0,j) \le (1+\epsilon_d) \cdot p_d(0,j)$. Thus, 
the claim is valid for all $j \in J \cup \{c\}$.
\end{proof}
The following proposition, when combined with Proposition~\ref{prop::pa1}, implies Lemma~\ref{lemma:asmeq}.
\begin{proposition}
\label{prop:final}
Let $g$ be a node at height $2d$ in $T$, $0 \le d \le D$. Then for all $j \in J$,
\begin{eqnarray*}
(1-\epsilon_d/2) \cdot p_d(j,j+\delta) \le q_{g,d}(j,j+\delta) \le (1+\epsilon_d/2) \cdot p_d(j,j+\delta).
\end{eqnarray*}
\end{proposition}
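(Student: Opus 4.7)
The plan is to prove the proposition by induction on $d$. The base case $d=0$ is immediate: by Definition~\ref{def::block}, both $B(T_{g,0})$ and $B(RT_{f,0})$ are the empty event, so $q_{g,0}(j,j+\delta)=p_0(j,j+\delta)=1$ for every $j\in J$ and the claimed inequalities hold trivially. For the inductive step, assume Proposition~\ref{prop:final} for $d-1$; applying Proposition~\ref{prop::pa1} to each child $g'$ of $g$ in $T$ (which sits at height $2(d-1)$) promotes this hypothesis to: for every such $g'$ and every $s\in J\cup\{c\}$, $(1-\epsilon_{d-1})\,p_{d-1}(0,s)\le q_{g',d-1}(0,s)\le(1+\epsilon_{d-1})\,p_{d-1}(0,s)$.

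Next I would derive the fundamental recursion on both sides. Because $T$ is good, properties P1 and P2 of Definition~\ref{def::goodtree} guarantee that for distinct $G',G''\in\Gamma_T(g)$ the subtrees $T_{g',d-1}$ and $T_{g'',d-1}$ (for $g'\in G'$ and $g''\in G''$) share no edge labels with each other or with the chain of ancestors of $g$ in $T$; hence, after conditioning on $\beta(g)=t$, the events appearing in Definition~\ref{def::block} across distinct $G'$ and across distinct $g'\in G'$ are mutually independent. Writing $s=t/n^{-(v_H-2)/(e_H-1)}$ and using that each non-root even-height label $g'$ in $T$ has $\beta(g')$ uniform on $[0,\rho]$, independence together with $|G'|=e_H-1$ yields
\[
\probcnd{\ol{B(T_{g,d})}}{\beta(g)=t} = \prod_{G'\in\Gamma_T(g)}\Bigl(1-(s/k)^{e_H-1}\prod_{g'\in G'}q_{g',d-1}(0,s)\Bigr),
\]
and, by the symmetry of the uniform branching structure of $RT$,
\[
\probcnd{\ol{B(RT_{f,d})}}{\beta(f)=t} = \Bigl(1-(s/k)^{e_H-1}\,p_{d-1}(0,s)^{e_H-1}\Bigr)^{R},
\]
where $R=\floor{\lambda k^{e_H-1}}-\floor{k^{e_H/2-1/3}}$.

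I would then integrate both sides over $t\in[jn^{-(v_H-2)/(e_H-1)},(j+\delta)n^{-(v_H-2)/(e_H-1)})$ and compare. Since the inductive control on $q_{g',d-1}(0,\cdot)$ is a priori available only on the grid $J$, I would first extend it to arbitrary $s\in[j,j+\delta)$: the map $s\mapsto s\cdot q_{g',d-1}(0,s)$ is $1$-Lipschitz (its derivative is a conditional probability lying in $[0,1]$), and the same holds for $s\mapsto s\cdot p_{d-1}(0,s)$; combined with $\delta\le k^{-20e_H}$ and $\delta^{1/4}=o(\epsilon_d\cdot p_d(0,c))$, this yields $q_{g',d-1}(0,s)=(1\pm(\epsilon_{d-1}+o(\epsilon_{d-1})))\,p_{d-1}(0,s)$ uniformly on $[j,j+\delta)$. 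Property P3 of good trees together with $E_1$ further gives $|\Gamma_T(g)|-R=O(k^{e_H/2-1/3})$.

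The main obstacle is the multiplicative bookkeeping. Each of the $R$ matched factors between the two products differs only by a relative error of $O(\epsilon_{d-1}(s/k)^{e_H-1})$; multiplying $R\approx\lambda k^{e_H-1}$ such factors inflates this to a total relative error of $O(\epsilon_{d-1}c^{e_H-1})$. The at most $O(k^{e_H/2-1/3})$ ``excess'' factors present in $T$ but not in $RT$ each equal $1-O((c/k)^{e_H-1})$, so together they introduce a further multiplicative slack of $1-O(c^{e_H-1}/k^{e_H/2+2/3})$. Both contributions are absorbed into $\epsilon_d/2$ provided the recursion $\epsilon_d\ge(128e_Hc)^{e_H}\epsilon_{d-1}$ holds, which is guaranteed by the definition $\epsilon_d=(128e_Hc)^{2de_H}/k^{e_H/2-2/3}$. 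Integrating the resulting pointwise bounds over the $\delta$-window then produces the claimed inequalities $(1-\epsilon_d/2)\,p_d(j,j+\delta)\le q_{g,d}(j,j+\delta)\le(1+\epsilon_d/2)\,p_d(j,j+\delta)$.
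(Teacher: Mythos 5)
Your proposal is correct and its skeleton (induction on $d$, feeding Proposition~\ref{prop::pa1} into the inductive step, then error bookkeeping against $\epsilon_d$) matches the paper's, but the technical engine driving the recursion is organized differently. The paper never writes an exact formula; instead its Claim~\ref{prop:facts} produces one-sided inequalities for $q_{g,d}(j,j+\delta)$ and $p_d(j,j+\delta)$ purely in terms of the grid endpoint $j$, and the slack from conditioning on the whole window $[j,j+\delta)$ (rather than a point) is absorbed by introducing the auxiliary events $\E_1,\E_2$ in the proof of fact~(2), which cost only a factor $1-\epsilon_{d-1}$. You instead condition on the exact value $\beta(g)=t$, which yields (by goodness of $T$, i.e.\ disjointness of the subtrees via P1--P2) a genuine product identity, and then bridge the grid-restricted inductive control to all $s\in[j,j+\delta)$ via the $1$-Lipschitz property of $s\mapsto s\,q_{g',d-1}(0,s)$ before integrating. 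Both routes are valid; yours is conceptually cleaner (an identity plus a smoothness argument) while the paper's avoids any appeal to densities or Lipschitz continuity and stays entirely at the level of event inclusions. Two small slips, neither fatal: the excess-factor slack should read $1-O(c^{e_H-1}/k^{e_H/2-2/3})$ (not $k^{e_H/2+2/3}$), since $k^{e_H/2-1/3}\cdot(c/k)^{e_H-1}=c^{e_H-1}k^{-(e_H/2-2/3)}$; and the definition of $\epsilon_d$ actually gives $\epsilon_d=(128e_Hc)^{2e_H}\epsilon_{d-1}$, which is even more room than the $(128e_Hc)^{e_H}$ recursion you invoke, so the absorption still goes through. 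You would also want to make explicit that the Lipschitz step relies on $j\ge\sqrt{\delta}$ (so that dividing the $O(\delta)$ perturbation of $sq(0,s)$ by $s$ still gives $O(\sqrt{\delta})=o(\epsilon_{d-1}p_{d-1}(0,c))$), which is exactly why the grid $J$ starts at $\sqrt{\delta}$.
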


The rest of the paper is dedicated for the proof of Proposition~\ref{prop:final}.
We collect some useful facts:
\begin{claim}
\label{prop:facts}
Let $g$ be a node at height $2d$ in $T$, $1 \le d \le D$. Then for all $j \in J$,
\begin{enumerate}
\item[(1)] $q_{g,d}(j,j+\delta) \le \prod_{G \in \Gamma_T(g)} \big(1 - \prod_{g' \in G}\frac{j \cdot q_{g',d-1}(0,j)}{k}\big)$.
\item[(2)] $q_{g,d}(j,j+\delta) \ge (1-\epsilon_{d-1}) \cdot
  \prod_{G \in \Gamma_T(g)} \big(1 - \prod_{g' \in G}\frac{j \cdot q_{g',d-1}(0,j)}{k}\big)$.
\item[(3)] $p_d(j,j+\delta) \,\,\, \ge  (1+\epsilon_{d-1})^{-1} \cdot
  \big(1 - \big( \frac{j \cdot p_{d-1}(0,j)}{k}\big)^{e_H-1} \big)^{|\Gamma_{RT}(f)|}$.
\item[(4)] $p_d(j,j+\delta) \,\,\, \le 
  \big(1 - \big(\frac{j \cdot p_{d-1}(0,j)}{k}\big)^{e_H-1})^{|\Gamma_{RT}(f)|}$.
\end{enumerate}
\end{claim}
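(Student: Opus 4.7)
My plan is to prove all four bounds in parallel by a one-step analysis that exploits the recursive form of $B(\cdot)$ together with the disjointness guaranteed by property~P2 of the good tree. For items~(1)--(2) I would condition on $\beta(g) = \beta$, analyze the per-child-subgraph events, and then integrate over $\beta \in [jn^{-(v_H-2)/(e_H-1)},(j+\delta)n^{-(v_H-2)/(e_H-1)}]$; items~(3)--(4) are handled identically with $T,g,q$ replaced by $RT,f,p$. Setting $P_{g'}(\beta) := \prob{\event{\beta(g') < \beta} \cap \ol{B(T_{g',d-1})}}$ and using that the subgraph-labels in $T$ are pairwise disjoint (P2), one checks that (i)~$g$ belongs to its parent subgraph and hence to no other subgraph-label in the subtrees $T_{g',d-1}$, and (ii)~for distinct $g', g'' \in G$ or for $g' \in G, g'' \in G'$ with $G \ne G'$, the subtrees $T_{g',d-1}$ and $T_{g'',d-1}$ use disjoint edge-sets. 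Therefore the per-$G$ failure events are mutually independent and independent of $\beta(g)$, and
\begin{equation*}
\Probcnd{\ol{B(T_{g,d})}}{\beta(g) = \beta} \;=\; \prod_{G \in \Gamma_T(g)}\Big(1 - \prod_{g' \in G} P_{g'}(\beta)\Big).
\end{equation*}
All the remaining work is to bound $P_{g'}(\beta)$ above and below.

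\textbf{Items~(1) and~(4).} Shrinking the range of $\beta(g')$ to $[0,jn^{-(v_H-2)/(e_H-1)})$ gives $P_{g'}(\beta) \ge \tfrac{j}{k}q_{g',d-1}(0,j)$; plugging this into the per-$G$ factor and taking the product over $G$ yields item~(1) directly (the average over $\beta$ is trivial since the bound is $\beta$-free). Item~(4) is the $RT$-version: by construction every non-leaf even-height node in $RT$ has outdegree exactly $|\Gamma_{RT}(f)|$ and all subtrees $RT_{g',d-1}$ are probabilistically equivalent to $RT_{f,d-1}$, so the analogue of $q_{g',d-1}(0,j)$ collapses to $p_{d-1}(0,j)$ and the per-$G$ product becomes $(jp_{d-1}(0,j)/k)^{e_H-1}$.

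\textbf{Items~(2) and~(3).} For the lower bounds on $q_{g,d}$ and $p_d$ I would upper-bound $P_{g'}(\beta)$ by splitting the integration into $\beta(g') \in [0,jn^{-(v_H-2)/(e_H-1)})$ and $\beta(g') \in [jn^{-(v_H-2)/(e_H-1)},\beta)$ and bounding $\prob{\ol{B(T_{g',d-1})} \mid \beta(g') = t}$ by $1$ on the second piece, obtaining $P_{g'}(\beta) \le \tfrac{j}{k}q_{g',d-1}(0,j) + \tfrac{\delta}{k}$. This inflates each per-$g'$ bound by a factor $1+\delta/(j\,q_{g',d-1}(0,j))$, which is tiny thanks to the choice of $\delta$ (in particular $\delta \le k^{-20e_H}$ and $\delta^{1/4} = o(\epsilon_{d-1} p_{d-1}(0,c))$) combined with $j \ge \sqrt\delta$. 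Raising to the $(e_H-1)$-th power inside the per-$G$ factor, and then multiplying across the at most $2k^{e_H-1}$ values of $G$, the accumulated multiplicative slack is absorbed into the stated $(1-\epsilon_{d-1})$ factor of~(2); item~(3) is the $RT$-analogue, with the $(1+\epsilon_{d-1})^{-1}$ prefactor playing the same role after trivial algebraic rearrangement.

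\textbf{Main obstacle.} The delicate point is the slack bookkeeping in~(2) and~(3): the per-$g'$ slack $\delta/(j\,q_{g',d-1}(0,j))$ is tiny, but after being compounded over the $e_H-1$ edges of $G$ and then over $\approx k^{e_H-1}$ choices of $G$ it must still not swamp $\epsilon_{d-1}$. This is precisely why $\epsilon_d$ carries the denominator $k^{e_H/2-2/3}$, which leaves enough headroom for $|\Gamma_T(g)|$ applications of the slack. A minor side point is that making $\delta/(j\,q_{g',d-1}(0,j))$ genuinely small requires an a-priori lower bound on $q_{g',d-1}(0,j)$; I would obtain this by observing that the trivial event that every birthtime in $T_{g',d-1}$ exceeds $\beta(g')$ already forces $q_{g',d-1}(0,j)$ close to $1$ on the range $j \le \sqrt\delta+\delta^{1/4}$ (in the same spirit as the second half of the proof of Proposition~\ref{prop::pa1}), and then extending to larger $j$ via Proposition~\ref{fact::mono}.
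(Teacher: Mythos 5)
Your proposal is essentially the same as the paper's for the main factorization: both exploit property~P2 to write the conditional probability of $\ol{B(T_{g,d})}$ as a product over $G\in\Gamma_T(g)$ of per-$G$ failure probabilities, and both get item~(1) by restricting $\beta(g')$ to $[0,jn^{-(v_H-2)/(e_H-1)})$ so that the inner factor becomes $jq_{g',d-1}(0,j)/k$. For items~(2) and~(3) the paper does the bookkeeping differently: it introduces two auxiliary events, $\E_1$ (the strengthened version of $\ol{B(T_{g,d})}$ where the birthtime test uses the fixed threshold $jn^{-(v_H-2)/(e_H-1)}$ in place of $\beta(g)$) and $\E_2$ (no relevant birthtime lands in the window $[j,j+\delta)$), observes $\E_1\cap\E_2 \subseteq \ol{B(T_{g,d})}$, computes $\prob{\E_1}$ exactly as the product on the right-hand side, and lower-bounds $\probcnd{\E_2}{\E_1}$ by $(1-2\delta/k)^{2\lambda e_Hk^{e_H-1}} \ge 1-\epsilon_{d-1}$. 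You instead use the pointwise bound $P_{g'}(\beta) \le \tfrac{j}{k}q_{g',d-1}(0,j) + \tfrac{\delta}{k}$ and accumulate the additive $\delta/k$ slack across the roughly $e_Hk^{e_H-1}$ edges. Both routes work and both ultimately trace the $\epsilon_{d-1}$ loss to the width of the window $[j,j+\delta)$; yours is arguably slightly cleaner since it avoids the conditional-distribution step $\probcnd{\E_2}{\E_1}$.

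The one concrete flaw is the closing side remark. You frame the per-edge slack multiplicatively as $1+\delta/(j\,q_{g',d-1}(0,j))$ and then say you would need an a-priori lower bound on $q_{g',d-1}(0,j)$, to be obtained at small $j$ and then "extended to larger $j$ via Proposition~\ref{fact::mono}." That extension goes the wrong way: Proposition~\ref{fact::mono} states $p_d(0,x)\ge p_d(0,y)$ for $x<y$, i.e., the quantity \emph{decreases} as its second argument grows, so a bound near $j=\sqrt\delta$ cannot propagate up to $j\le c$ by monotonicity. Fortunately no such lower bound is required if you keep the slack additive: writing $a_{g'}=jq_{g',d-1}(0,j)/k$ and $b=\delta/k$, and using only $a_{g'}\le j/k$ and $b\le j/k$, one gets $\prod_{g'}(a_{g'}+b)-\prod_{g'}a_{g'} = O_{e_H}(\delta j^{e_H-2}/k^{e_H-1})$, while $1-\prod_{g'}a_{g'}\ge 1/2$, so the per-$G$ relative loss is $O_{e_H}(\delta j^{e_H-2}/k^{e_H-1})$ and the total over $\le 2\lambda k^{e_H-1}$ values of $G$ is $O_{e_H}(\lambda\delta c^{e_H-2})\ll\epsilon_{d-1}$, independent of how small $q_{g',d-1}(0,j)$ might be. Replacing the multiplicative framing with this additive estimate removes the gap.
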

\begin{proof}
\begin{enumerate}
\item[(1)] 
Condition on $\event{jn^{-(v_H-2)/(e_H-1)} \le \beta(g) < (j+\delta)n^{-(v_H-2)/(e_H-1)}}$ and fix
$g' \in G \in \Gamma_T(g)$.
Given the definition of $\ol{B(T_{g,d})}$ and the fact
that $T$ is a good tree, it is enough to verify that the probability of
the event $\event{\beta(g') < \beta(g)} \cap \ol{B(T_{g',d-1})}$ is at least 
$j \cdot q_{g',d-1}(0,j)/k$.
Indeed, the event in question is implied by the occurrence of $\event{\beta(g') < jn^{-(v_H-2)/(e_H-1)}}
\cap \ol{B(T_{g',d-1})}$.
\item[(2)] Let $\E_1$ be the event that for all $G \in \Gamma_{T}(g)$, it
does not hold that for all $g' \in G$, 
both $\event{\beta(g') < jn^{-(v_H-2)/(e_H-1)}}$ and
$\ol{B(RT_{g',d-1})}$ occur. Let $\E_2$ be the event that for all
$g' \in G \in \Gamma_{T}(g)$, $\beta(g')$ is not in the interval
$[jn^{-(v_H-2)/(e_H-1)}, (j+\delta)n^{-(v_H-2)/(e_H-1)})$. 
It is easy to verify that $q_{g,d}(j,j+\delta) \ge \prob{\E_1 \cap \E_2}$. 
Now, we have $\prob{\E_1} = \prod_{G \in \Gamma_T(g)}
\big(1 - \prod_{g' \in G} \frac{j\cdot q_{g',d-1}(0,j)}{k}\big)$.
Given $\E_1$, every edge $g' \in G \in \Gamma_T(g)$ either satisfies
$\beta(g') < jn^{-(v_H-2)/(e_H-1)}$ or else, $\beta(g')$ is uniformly distributed
in the interval $[jn^{-(v_H-2)/(e_H-1)}, kn^{-(v_H-2)/(e_H-1)}]$. Hence, 
since the number of choices for $g' \in G \in \Gamma_T(g)$ is at most $2\lambda e_H k^{e_H-1}$,
for $n$ sufficiently large, $\probcnd{\E_2}{\E_1} \ge (1-2\delta/k)^{2\lambda e_H k^{e_H-1}}$ 
which is at least $1-\epsilon_{d-1}$ by the choice of $\delta$. This proves the claim.
\item[(3)] The proof is similar to the proof of Claim~\ref{prop:facts}~(2).
\item[(4)] The proof is similar to the proof of Claim~\ref{prop:facts}~(1).
\end{enumerate}
\end{proof}

We continue with the proof of Proposition~\ref{prop:final}, which is by induction
on $d$. The base case, $d=0$ is trivially true, since
$q_{g,0}(j,j+\delta) = p_0(j,j+\delta) = 1$ for all $j \in J$. Fix $d \in [D]$,
assume that the proposition holds for $d-1$ and let $j \in J$.
In the argument that follows, we use implicitly the fact that for $y>1$, $\exp(-1/(y-1)) < 
1-1/y < \exp(-1/y)$.

By Claim~\ref{prop:facts}~(1), the induction hypothesis and Proposition~\ref{prop::pa1} and
by the occurrence of $E_1$ we have
\begin{eqnarray*}
q_{g,d}(j,j+\delta) &\le&
   \prod_{G \in \Gamma_T(g)}\Big(1 - \prod_{g' \in G}\frac{j \cdot q_{g',d-1}(0,j)}{k}\Big) \\
&\le&
   \prod_{G \in \Gamma_T(g)}\Big(1 - \prod_{g' \in G}\frac{(1-\epsilon_{d-1}) \cdot j \cdot p_{d-1}(0,j)}{k}\Big) \\
&\le&
   \Big(1 - \Big(\frac{(1-\epsilon_{d-1}) \cdot j \cdot p_{d-1}(0,j)}{k}\Big)^{e_H-1}
   \Big)^{\floor{\lambda k^{e_H-1}}- \floor{k^{e_H/2-1/3}}} = (*).
\end{eqnarray*}
Using Claim~\ref{prop:facts}~(3) we can further upper bound~$(*)$ as follows:
\begin{eqnarray*}
(*) &\le& \Big(1 - \Big(\frac{j \cdot p_{d-1}(0,j)}{k}\Big)^{e_H-1}
          \Big)^{(\floor{\lambda k^{e_H-1}}-\floor{k^{e_H/2-1/3}})(1-2e_H\epsilon_{d-1})} \\
&\le& \Big(1 - \Big(\frac{j \cdot p_{d-1}(0,j)}{k}\Big)^{e_H-1}
      \Big)^{\floor{\lambda k^{e_H-1}} - \floor{k^{e_H/2-1/3}} - 2e_H\epsilon_{d-1}\lambda k^{e_H-1}}  \\
&\le& (1+\epsilon_{d-1}) \cdot p_{d}(j,j+\delta) \cdot 
   \Big(1- \Big(\frac{j}{k}\Big)^{e_H-1}\Big)^{-2e_H\epsilon_{d-1}\lambda k^{e_H-1}} \\
&\le& (1+\epsilon_d/2) \cdot p_{d}(j,j+\delta).
\end{eqnarray*}
Now, by Claim~\ref{prop:facts}~(2), the induction hypothesis and Proposition~\ref{prop::pa1} and by the
occurrence of $E_1$ we have
\begin{eqnarray*}
q_{g,d}(j,j+\delta) &\ge&
(1-\epsilon_{d-1}) \cdot
        \prod_{G \in \Gamma_T(g)}\Big(1 - \prod_{g' \in G}\frac{j \cdot q_{g',d-1}(0,j)}{k}\Big) \\
&\ge&
(1-\epsilon_{d-1}) \cdot
        \prod_{G \in \Gamma_T(g)}\Big(1 - \prod_{g' \in G}\frac{(1+\epsilon_{d-1}) \cdot j \cdot p_{d-1}(0,j)}{k}\Big) \\
&\ge&
(1-\epsilon_{d-1}) \cdot
        \Big(1 - \Big(\frac{(1+\epsilon_{d-1}) \cdot j \cdot p_{d-1}(0,j)}{k}\Big)^{e_H-1}\Big)^{\lambda k^{e_H-1}+k^{e_H/2-1/3}} = (**).
\end{eqnarray*}
One can now use Claim~\ref{prop:facts}~(4) to further lower bound~$(**)$ as follows:
\begin{eqnarray*}
(**) &\ge&
(1-\epsilon_{d-1}) \cdot
\Big(1 - \Big(\frac{j \cdot p_{d-1}(0,j)}{k}\Big)^{e_H-1}
\Big)^{(\lambda k^{e_H-1}+k^{e_H/2-1/3})(1+2e_H\epsilon_{d-1})} \\
&\ge&
(1-\epsilon_{d-1}) \cdot
\Big(1 - \Big(\frac{j \cdot p_{d-1}(0,j)}{k}\Big)^{e_H-1}
\Big)^{\floor{\lambda k^{e_H-1}}-\floor{k^{e_H/2-1/3}}+8e_H\epsilon_{d-1}\lambda k^{e_H-1}} \\
&\ge&   
(1-\epsilon_{d-1}) \cdot p_d(j,j+\delta) \cdot
\Big(1 - \Big(\frac{j}{k}\Big)^{e_H-1}\Big)^{8e_H\epsilon_{d-1}\lambda k^{e_H-1}}  \\
&\ge& (1-\epsilon_d/2) \cdot p_d(j,j+\delta).
\end{eqnarray*} 
This completes the proof of Proposition~\ref{prop:final}.

\section*{Acknowledgment}
The author would like to kindly thank Ilan Newman and Oren Ben-Zwi for 
helpful discussions.



\begin{bibdiv}
\begin{biblist}

\bib{Bohman}{article}{
      author={Bohman, T.},
       title={The triangle-free process},
        date={2008},
      eprint={arXiv:0806.4375v1},
}

\bib{Boll00}{article}{
      author={Bollob{\'a}s, B{\'e}la},
      author={Riordan, Oliver},
       title={Constrained graph processes},
        date={2000},
     journal={Electr. J. Comb.},
      volume={7},
}

\bib{Brown}{article}{
      author={Brown, W.~G.},
       title={On graphs that do not contain a {T}homsen graph},
        date={1966},
     journal={Canad. Math. Bull.},
      volume={9},
       pages={281\ndash 285},
}

\bib{ERS}{article}{
      author={Erd\H{o}s, P.},
      author={R\'{e}nyi, A.},
      author={S\'{o}s, V.~T.},
       title={On a problem of graph theory},
        date={1966},
     journal={Studia Sci. Acad. Math. Hungar.},
      volume={1},
       pages={215\ndash 235},
}

\bib{erdos74probabilistic}{misc}{
      author={Erd\H{o}s, Paul},
      author={Spencer, Joel~H.},
       title={Probabilistic methods in combinatorics},
        date={1974},
         url={citeseer.ist.psu.edu/spencer74probabilistic.html},
}

\bib{ErdosSW95}{article}{
      author={Erd\H{o}s, Paul},
      author={Suen, Stephen},
      author={Winkler, Peter},
       title={On the size of a random maximal graph},
        date={1995},
     journal={Random Struct. Algorithms},
      volume={6},
      number={2/3},
       pages={309\ndash 318},
}

\bib{janson00random}{misc}{
      author={Janson, Svante},
      author={Luczak, Tomasz},
      author={Ruci\'{n}ski, Andrzej},
       title={Random graphs},
        date={2000},
         url={citeseer.comp.nus.edu.sg/3257.html},
}

\bib{DBLP:journals/rsa/Janson90a}{article}{
      author={Janson, Svante},
       title={Poisson approximation for large deviations},
        date={1990},
     journal={Random Struct. Algorithms},
      volume={1},
      number={2},
       pages={221\ndash 230},
}

\bib{OsthusT01}{article}{
      author={Osthus, Deryk},
      author={Taraz, Anusch},
       title={Random maximal h-free graphs},
        date={2001},
     journal={Random Struct. Algorithms},
      volume={18},
      number={1},
       pages={61\ndash 82},
}

\bib{RucinskiW92}{article}{
      author={Rucinski, Andrzej},
      author={Wormald, Nicholas~C.},
       title={Random graph processes with degree restrictions},
        date={1992},
     journal={Combinatorics, Probability {\&} Computing},
      volume={1},
       pages={169\ndash 180},
}

\bib{DBLP:journals/jct/Spencer90a}{article}{
      author={Spencer, Joel~H.},
       title={Threshold functions for extension statements},
        date={1990},
     journal={J. Comb. Theory, Ser. A},
      volume={53},
      number={2},
       pages={286\ndash 305},
}

\bib{Spencer0a}{article}{
      author={Spencer, Joel~H.},
       title={Maximal triangle-free graphs and {R}amsey $r(3,t)$},
        date={1995},
     journal={unpublished manuscript},
}

\end{biblist}
\end{bibdiv}

\section*{Errata}
\begin{itemize} 
\item At the end of Section~\ref{sec::2} we state that
Lemmas~\ref{lemma::toprove1}~and~\ref{lemma::toprove2} imply~(\ref{eq::main}).
For that implication to be valid, we also need that Lemma~\ref{lemma::toprove1}
would state that $\prob{E_1} = 1 - o(c n^{-(v_H-2)/(e_H-1)})$. This is exactly
what we prove in a subsequent section. The implication now holds essentially
since the above gives us that the probability of $E_1$ given $\event{\beta(f) <
cn^{-(v_H-2)/(e_H-1)}}$ is at least $1-o(1)$ and in addition, it follows from
Lemma~\ref{lemma::toprove1} and
Definitions~\ref{def::afdtree}~and~\ref{def::goodtree} that the probability of
$E_2(f)$ given $\event{\beta(f) < cn^{-(v_H-2)/(e_H-1)}}$ is also at least
$1-o(1)$.
\item Page $2$, line $-13$: ``than'' should be ``that''.  
\item
Page $13$, line $5$: ``the'' should be ``that''.  
\item Page $16$, line $13$:
``no no'' should be ``no''.  
\item Page $17$, line $-12$: ``There'' should be ``Given $g$, there''.
\item Page $18$, line $13$: ``edges'' should be ``vertices''.
\item Page $22$, line $3$: ``$T_{f,D-1}$'' should
be ``$RT_{f,D-1}$''.  
\end{itemize}

\end{document}